\g@addto@macro{\endabstract}{\@setabstract}
\newcommand{\authorfootnotes}{\renewcommand\thefootnote{\@fnsymbol\c@footnote}}%
\newcommand{\BB}{{\mathcal B}}
\newcommand{\EE}{{\mathcal E}}
\newcommand{\FF}{{\mathcal F}}
\newcommand{\GG}{{\mathcal G}}
\newcommand{\HH}{{\mathcal H}}
\newcommand{\MM}{{\mathcal M}}
\newcommand{\BR}{{\mathbb R}}
\newcommand{\esssup}{\mathop{\mathrm{ess\,sup}}}
\newtheorem{theorem}{\bf Theorem}[section]
\newtheorem{proposition}[theorem]{\bf Proposition}
\newtheorem{lemma}[theorem]{\bf Lemma}
\newtheorem{corollary}[theorem]{\bf Corollary}
\theoremstyle{definition}
\newtheorem{definition}[theorem]{Definition}
\newtheorem{remark}[theorem]{Remark}
\numberwithin{equation}{section}
\renewcommand{\thefootnote}{{}}
\begin{document}

\title[Semilinear equations]{Nonlinear elliptic equations with integro-differential \\ divergence form operators and measure data \\under 
sign condition on the nonlinearity}


\maketitle
\begin{center}
 \normalsize
  \authorfootnotes
  TOMASZ KLIMSIAK\footnote{e-mail: {\tt tomas@mat.umk.pl}}\textsuperscript{1,2}   \par \bigskip

  \textsuperscript{1} {\small Institute of Mathematics, Polish Academy Of Sciences,\\
 \'{S}niadeckich 8,   00-656 Warsaw, Poland} \par \medskip
 
  \textsuperscript{2} {\small Faculty of
Mathematics and Computer Science, Nicolaus Copernicus University,\\
Chopina 12/18, 87-100 Toru\'n, Poland }\par
\end{center}

\begin{abstract}
We study existence problem for  semilinear equations with Borel measure data and  operator generated by a symmetric Markov semigroup.
We assume merely that the nonlinear part satisfies the so-called sign condition. Using the method of sub and supersolutions
we show the existence of maximal measure for which there exists a solution to the problem (the so-called reduced measure introduced by H. Brezis, M. Marcus and A.C. Ponce).
\end{abstract}

\footnotetext{{\em Mathematics Subject Classification:}
Primary ; Secondary .}

\footnotetext{{\em Keywords:}}

\section{Introduction}
{\bf Formulation of the problem}. Let $E$ be a locally compact separable metric space and $m$ be a full support positive Radon
measure on $E$. Consider a self-adjoint operator $(A,D(A))$ on $L^2(E;m)$ generating a Markov semigroup $(T_t)$,
a Caratheodory function $f:E\times \mathbb R\to \mathbb R$ satisfying the sign condition:
\begin{equation}
\label{eq2.1.1}
f(x,u)\cdot u\le 0,\quad u\in \mathbb R,\,\, m\mbox{-a.e.}\,\, x\in E,
\end{equation} 
and  Borel measure $\mu$ on $E$ which obeys $\int_E\rho\,d|\mu|<\infty$  for a strictly positive weight $\rho:E\to\mathbb R$
(class of such measures shall be denoted by $\MM_\rho$). In the present paper, we investigate the existence problem for
the following equation:
\begin{equation}
\label{eq1.1}
-Au=f(\cdot,u)+\mu.
\end{equation}
Throughout the paper, we assume that there exists the Green function $G$ for $A$ (see Section \ref{sec2.2}),
and $T_t\rho\le \rho\, m$-a.e. for any $t\ge 0$ (e.g. $\rho\equiv 1$ or $\rho$ being the principal eigenfunction for $-A$
satisfies the last requirement). 
The model example of purely non-local   operator, which fits into our framework  is
\begin{equation}
\label{eq1.3a0}
A u(x)=\lim_{r\searrow 0} \int_{\BR^d\setminus B(x,r)} \frac{a(x,y)(u(x)-u(y))}{|x-y|^{d}\varphi(|x-y|)}\,dy,
\end{equation}
with symmetric $a$ bounded between two strictly positive constants and $\varphi$ satisfying some standard growth assumptions (see Section \ref{sec3}).
In particular, taking $a\equiv const, \varphi(r)=r^{2\alpha}$ gives the fractional Laplacian (i.e. $A=-(-\Delta)^\alpha$), and 
taking $1/\varphi(r)=\int_0^1r^{-2\alpha}\,\nu(d\alpha)$, with $\nu$ being  a positive finite Borel measure compactly  supported  in $(0,1)$, gives 
a mixed relativistic symmetric stable operator  (see \cite{CK}).

A model example of a local operator fitting into our framework is 
\begin{equation}
\label{eq1.3}
Au(x)=\sum_{i,j=1}^d(a_{i,j}(x)u_{x_i})_{x_j},
\end{equation}
where  
$a=[a_{i,j}]_{i,j=1}^d$ is a  symmetric matrix  with Borel measurable locally integrable entries satisfying 
 ellipticity condition:
 \[
w(x)|\xi|^2\le\sum_{i,j=1}^da_{i,j}(x)\xi_i\xi_j\le v(x)|\xi|^2,\quad \xi,x\in\BR^d, 
 \]
where $v,w$ are suitable positive functions (see e.g. \cite{CW}).
For other examples of noteworthy classes of  operators, which the theory of the present paper is applicable to, see Section \ref{sec3}.

By a solution to \eqref{eq1.1}, we mean a Borel function $u$ on $E$ (finite $m$-a.e.) such that
$f(\cdot,u)\in L^1(E;\rho\cdot m)$, and
\begin{equation}
\label{eq1.2}
u(x)=\int_E G(x,y)f(y,u(y))\,m(dy)+\int_E G(x,y)\,\mu(dy),\quad m\mbox{-a.e.}\,\, x\in E.
\end{equation}
Let $(\EE,D(\EE))$ be a Dirichlet form generated by $(A,D(A))$:
\[
\EE(w,v):=(\sqrt{-A}w,\sqrt{-A}v),\quad  w,v\in D(\EE):=D(\sqrt {-A}),
\]
and $Cap_A$ be a capacity generated by $A$:  for open $U\subset E$, 
\begin{equation}
\label{int.cap}
Cap_A(U):=\inf\{\EE(w,w): w\ge \mathbf1_U\,\, m\mbox{-a.e.},\,\, w\in D(\EE)\},
\end{equation}
and for arbitrary $B\subset E$, $Cap_A(B)=\inf\{Cap_A(V): B\subset V,\,\,\, V\,\mbox{is open}\}$. 
Using this subadditive set function, we may consider a unique decomposition:
\[
\mu=\mu_d+\mu_c
\]
of any measure $\mu\in\MM_\rho$, where $\mu_c\bot Cap_A$ and $\mu_d\ll Cap_A$.
It is well known (see \cite{KR:NoDEA2}) that if $\mu\ll Cap_A$, then there exists a solution to \eqref{eq1.1}.
However, if  $\mu_c$ is non-trivial, then by \cite[Theorem 4.14]{BMP}, one can find  a function $f$ (independent of $x\in E$)  satisfying the above conditions
such that there is no solution to \eqref{eq1.1} with $A=\Delta$ on a bounded domain
$D\subset \BR^d$ with zero Dirichlet  boundary condition.  This shows that the presence of the non-trivial concentrated part $\mu_c$ of measure $\mu$
 changes the picture completely. Our goal is to study the existence and non-existence mechanism for \eqref{eq1.1} hidden in the relation
between operator $A$, right-hand side $f$ and concentrated measure $\mu_c$.

It is worth mentioning here  that, by \cite[Proposition 4.3, Theorem 4.9]{K:NoDEA}, if $u$ is a  solution to \eqref{eq1.1}
with $\mu\in\MM_1$, then
it is a {\em renormalized solution} to \eqref{eq1.1}, i.e.
\begin{enumerate}[(i)]
\item $f(\cdot,u)\in L^1(E;m)$, $T_k(u):=\max\{-k,\min\{u,k\}\}\in D_e(\EE),\, k\ge 0$, 
\item for any $k\ge 0$ there exists a bounded  measure $\lambda_k$ 
such that $\lambda_k\ll Cap_A$ and  for any bounded $\eta \in D_{e}(\EE)$
\[
\EE(T_k(u),\eta)=\int_Ef(\cdot,u)\eta\,dm+\int_E\tilde\eta\,d\mu_d+\int_E\tilde\eta\,d\lambda_k,
\]
\item $\int_E\xi\,d\lambda_k\to\int_E\xi\,d\mu_c$ as $k\to \infty$ for any $\xi\in C_b(E)$ (i.e. {\em narrowly}).
\end{enumerate}
Here $\tilde\eta$ is a {\em quasi-continuous} $m$-version of $\eta$, and $(\EE,D_e(\EE))$
is the extended Dirichlet form  (see Section \ref{sec2}). The converse is also true - renormalized solution solves \eqref{eq1.1} - 
if we assume additionally e.g., that the resolvent of $(A,D(A))$ maps $\BB_b(E)$ into $C_b(E)$ (see \cite[Theorem 4.9]{K:NoDEA}).

{\bf Main results of the paper.}  Fix  a strictly positive  bounded Borel function $\varrho\in L^1(E;m)\cap L^2(E;m)$ such that 
\[
R\varrho:= \int_E G(\cdot,y)\varrho(y)\,m(dy)\le \rho\quad m\mbox{-a.e.},
\] 
and $R\varrho$ is bounded.  For the existence results, we need one more assumption on $f$, namely that 
 for any $\underline u,\overline u\in L^1(E;\varrho\cdot m)$ such that $f(\cdot,\underline u), f(\cdot,\overline u)\in L^1(E;\rho\cdot m)$ we have
\[
x\longmapsto \sup_{y\in [\underline u(x),\overline u(x)]}|f(x,y)|\in L^1(E;\rho\cdot m).
\]
The above condition is satisfied,   supposing e.g. that   $f$ is non-increasing with respect to $y$
or there exists an increasing function $g$ on $\BR$ such that $c_1 g(y)\le |f(x,y)|\le c_2 g(y),\, x\in E,y\in\BR$
for some $c_1,c_2>0$.
In Section \ref{sec4}, we extend  Perron's method of supersolutions and subsolutions
and prove the following result (Theorem \ref{th3.1}).
\begin{center}
\begin{minipage}[c][1,65cm][t]{0,92\textwidth}
\textbf{Theorem 1.}  
{\em Let $A,f,\mu$ be as in the foregoing. Assume that there exists a subsolution $\underline u$ to (\ref{eq1.1}) and a supersolution $\overline u$ to (\ref{eq1.1})
such that $\underline u\le \overline u,\,m$-a.e. Then, there exists  a maximal solution $u$ to (\ref{eq1.1}) such that $\underline u\le u\le \overline u,\, m$-a.e.
}
\end{minipage}
\end{center}
The above result, but for $A=\Delta$, was proved in \cite{MP}. 
To go further in Section \ref{sec5}, we extend  the theory of {\em reduced measures} introduced by Brezis, Marcus and Ponce in \cite{BMP1,BMP}
for the classical Dirichlet Laplacian and monotone $f$.  Let $\GG(f)$ be a class of measures $\mu\in\MM_\rho$
for which there exists a solution to \eqref{eq1.1}, and for given $\nu\in\MM_\rho$ denote by $\GG_{\le\nu}(f)$ (resp. $\GG_{\ge\nu}(f)$)
the class of measures $\mu\in\GG(f)$ satisfying  $\mu\le\nu$ (resp. $\mu\ge\nu$). The measures belonging to $\GG(f)$
are called {\em good measures}. As we mentioned before in general $\GG(f)\subsetneq \MM_\rho$.
It appears that for any $\mu\in \MM_\rho$ such that $\GG_{\le\mu}(f)\neq\emptyset$, we can always find the biggest measure $\mu^{*,f}$ less than $\mu$
such that $\mu^{*,f}\in\GG(f)$ - such measure is called a {\em reduced measure}. This is, in particular, the content of our second main result 
(Theorem \ref{th4.1}).
 
\begin{center}
\begin{minipage}[c][5,65cm][t]{0,92\textwidth}
\textbf{Theorem 2.}  
{\em Let $A,f,\mu$ be as in the foregoing. 
Assume that $\GG_{\le\mu}(f)\neq \emptyset$. 
\begin{enumerate}[(1)] 
\item There exists $\mu^{*,f}\in \GG_{\le \mu}(f)$
such that
\[
\mu^{*,f}=\max \GG_{\le \mu}(f).
\]
\item Let $\phi$ be a strictly positive function in $L^1(E,\rho\cdot m)$. For any $n\ge 1$ there exists  a maximal solution $u_n$ to
\begin{equation*}
-Au=\max\{-n\phi,f\}(\cdot,u)+\mu.
\end{equation*}
Moreover,  $u_n\searrow u^{*,f}$, where $u^{*,f}$ is a maximal solution to
\begin{equation*}
-Au=f(\cdot,u)+\mu^{*,f}.
\end{equation*}
\item $\mu^{*,f}=\mu_d-\mu^-_c+\nu$ for a Borel measure $\nu$ on $E$ satisfying $0\le\nu\le \mu^+_c$.
\end{enumerate}
}
\end{minipage}
\end{center}

Clearly, when  $\mu$ is a good measure, then $\mu=\mu^{*,f}$ and so, by assertion (2) of the above theorem,
for any $\mu\in\GG(f)$ there exists a maximal solution to \eqref{eq1.1}.  For brevity, and when there is no risk for confusion, we mostly  omit the superscript $f$ on $\mu^{*,f}$ and $u^{*,f}$.  In case $\GG_{\ge \mu}(f)\neq\emptyset$, we may  also consider the problem of the existence of  the smallest good measure 
 greater than $\mu$. By a simple calculation we find that
\[
\mu_{*,f}:=-(-\mu)^{*,\tilde f},
\]
where $\tilde f(x,y):= -f(x,-y),\, x\in E,\, y\in\mathbb R$, is a solution to the latter problem. Therefore, by Theorem 2 provided $\GG_{\ge\mu}(f)\neq\emptyset$, we get the existence of $\mu_{*,f}\in \GG(f)$ such that
\[
\mu_{*,f}=\min\GG_{\ge\mu}(f).
\]
Using the notion of reduced measures and
the results of Theorem 1 and Theorem 2, we easily get  the following result (we follow the idea of A.C. Ponce).

\begin{center}
\begin{minipage}[c][1,25cm][t]{0,92\textwidth}
\textbf{Theorem 3.}  
{\em Let $A,f,\mu$ be as in the foregoing. 
Assume that there exists a subsolution  and a supersolution to (\ref{eq1.1}).
 Then there exists  a maximal solution to (\ref{eq1.1}).
}
\end{minipage}
\end{center}
Observe that contrary to  Theorem 1, we do not demand  in Theorem 3 that the subsolution be less than or equal to the supersolution $m$-a.e.

In Section \ref{sec6}, we prove a series of results concerning the properties of the set $\GG(f)$
and the reduction operator $\mu\longmapsto \mu^*$. They exhibit that the properties of these 
two mathematical objects, proved before in the literature  for Dirichlet Laplacian and non-increasing $f$, extend to 
the general framework considered here with an exception that the reduction operator is no longer Lipschitz continuous.
Therefore, the main concern of Section \ref{sec6} will be continuity of the reduction operator. We also observe that
 certain mapping defined via the reduction operator is a continuous metric projection onto $\GG(f)$. 

\begin{center}
\begin{minipage}[c][5,65cm][t]{0,92\textwidth}
\textbf{Theorem 4.}  
{\em Let $A,f,\mu$ be as in the foregoing. 
The set $\GG(f)$ is a convex and closed  subset of $\MM_\rho$ with total variation norm $\|\mu\|_{\rho}:= \int_E\rho\,d|\mu|$.
Moreover, the mapping
\[
\Pi_{f}:\MM_\rho\to \GG(f)
\]
defined as $\Pi_{f}(\mu):= (\mu^+)^*+(-\mu^-)_*$ is a continuous metric projection onto $\GG(f)$, i.e.
\[
\|\Pi_{f}(\mu)-\mu\|_\rho=\inf_{\nu\in\GG(f)} \|\nu-\mu\|_\rho.
\]
Moreover, if $Q:\MM_\rho\to\GG(f)$ is a metric projection onto $\GG(f)$, with a property that for any  orthogonal $\mu,\nu\in\MM_\rho$,
\[
Q(\mu+\nu)=Q(\mu)+Q(\nu),
\] 
then $Q=\Pi_{f}$.
}
\end{minipage}
\end{center}

One of the illustrative results concerning the structure of the set $\GG(f)$, easily following from the results of Section \ref{sec6}, is the following equality
\[
\GG(f)=\mathcal A(f)+L^1(E;\rho\cdot m),
\]
where $\mathcal A(f)$ is the class of {\em admissible measures}: it  consists of measures $\mu\in\MM_\rho$
such that $|f(\cdot,R\mu)|\in L^1(E;\rho\cdot m)$.
In Section \ref{sec7}, we  prove  much stronger result. Let $B_{L^1}(0,r):=\{u\in L^1(E;\rho\cdot m): \|u\|_{L^1(E;\rho\cdot m)}\le r\}$.

\begin{center}
\begin{minipage}[c][3cm][t]{0,92\textwidth}
\textbf{Theorem 5.}  
{\em 
 Let $A,f,\mu$ be as in the foregoing. Moreover, assume that $\rho$ is bounded and there exists $\varepsilon>0$ such that $\sup_{|y|\le\varepsilon}|f(\cdot,y)|\in L^1(E;\rho\cdot m)$. 
\begin{enumerate}
\item[(1)] For any $r>0$
\[
\GG(f)=\mathcal A(f)+B_{L^1}(0,r).
\]
\item[(2)] Let $cl$ denote the closure operator  in $(\MM_\rho,\|\cdot\|_\rho)$. Then
\[
\GG(f)=cl \mathcal A(f).
\]
\end{enumerate}
}
\end{minipage}
\end{center}
The last assertion implies that for any $g$, satisfying the same conditions as $f$,  if
\[
f(u)\sim g(u),\quad |u|\to \infty,
\]
then $\mu^{*,f}=\mu^{*,g}$ for positive $\mu\in\MM_\rho$ (see Corollary \ref{cor.asym12}). 
In other words, the reduction operator and the class of good measures, for $f$ independent of the state variable,
 depend only on the behavior  of $f$ at infinity.

{\bf Some comments on the  literature related to the problem}. 
Concerning the existence results for \eqref{eq1.1} with $\mu\ll Cap_A$, we mention the paper by H. Brezis and W. Strauss \cite{BS}, and by Y. Konishi \cite{Konishi},
where $f$ is assumed to be non-increasing and independent of $x\in E$ and $\mu\in L^1(E;m)$,
the paper by T. Klimsiak and A. Rozkosz \cite{KR:JFA}, where $f$ is non-increasing, and their another paper 
 \cite{KR:MM}, where $f$ merely satisfies  the sign condition.

As to the existence results for \eqref{eq1.1} with general measure data, above all, B\'enilan and Brezis' paper \cite{BB}
should be mentioned. It was published in 2004, however it summarizes, among other things, the existence and non-existence results
on the problem \eqref{eq1.1}, with $A=\Delta$ and non-increasing $f$, achieved in the period  1975-2004 (see Appendix A in \cite{BB}).
Most part of the said  paper is concerned with variational problems related to the Thomas-Fermi energy functional:
\begin{align*}
J_{TF}(\eta):= \frac12 \int_E R\eta\cdot\eta\,dm+\int_E(j(\cdot,\eta)-R\mu)\,dm,
\end{align*}
where $j:E\times\BR\to[0,\infty]$ is a function satisfying
\begin{itemize}
\item $j(x,0)=0\,\,m$-a.e., $j(x,r)=\infty,\, r<0\,\, m$-a.e., $j(x,r)<\infty,\, r>0\,\, m$-a.e.
\item $r\longmapsto j(x,r)$ is convex and l.s.c. $m$-a.e.,
\end{itemize}
and the domain of $J_{TF}$ is as follows
\[
D(J_{TF}):=\{\eta\in L^1(E;m): \eta\ge 0,\, \int_ER\eta\cdot\eta\,dm<\infty,\, j(\cdot,\eta)-R\mu\in L^1(E;m)\}.
\]
However, in \cite{BB} it is given an interesting result which relates  problem \eqref{eq1.1} with 
Thomas-Fermi functional. Namely, under some additional assumptions on $R$, it is proved in \cite[Theorem 1]{BB} that if a strictly positive  $\eta_0\in D(J_{TF})$
($I:=\int_E\eta_0\,dm$)  satisfies 
\begin{equation}
\label{eq.mp1}
J_{TF}(\eta_0)\le J_{TF}(\eta),\quad \eta\in D(J_{TF}),\, \int_E\eta\,dm=I,
\end{equation}
then there exists $\lambda\in \BR$ such that
\begin{equation}
\label{eq.mp12}
-Au=f_\lambda(\cdot, u)+\mu,
\end{equation}
with  $u:= R\eta_0-R\mu$ and $f_\lambda(x,y):=-(\partial j)^{-1}(x,y-\lambda)$, where $\partial j$
is the subdifferential of $j$ with respect to the second variable. Although the paper \cite{BB} is focused on
the minimization problem \eqref{eq.mp1}, which leads to \eqref{eq.mp12} in the special case of $\eta_0$
being strictly positive, it is worth mentioning that for the existence of $\eta_0$ in \eqref{eq.mp1} it is always assumed in \cite{BB}
that, up to translation, $\mu\in\mathcal A(f_0)$ (see conditions (H), (H$^+$), (3.18) in \cite{BB}).

In 2004 Brezis, Marcus and Ponce \cite{BMP1,BMP} introduced the notion of {\em reduced measures} for \eqref{eq1.1}
with $A=\Delta$ and $f$ being non-increasing. Since then the research on equations of the form \eqref{eq1.1} has
flourished once more, mostly with $A$ being the Dirichlet Laplacian or Dirichlet (rarely regional) fractional Laplacian.
We limit ourselves to mentioning \cite{BLO,CFY,DPP,MP,Wang,Ve} in case of Dirichlet Laplacian or divergence form diffusion operators and  \cite{CV1,CV2,CY,KMS,LDH}
in case of the fractional Laplacian. 

The theory of reduced measures was generalized by the author of the present paper in \cite{K:CVPDE} to
a class of Dirichlet operators and  with $f$ being non-increasing. The goal of the present paper is to analyze 
equation \eqref{eq1.1} for the same class of operators as considered in \cite{K:CVPDE} but under the  assumption
that $f$ merely satisfies the sign condition \eqref{eq2.1.1}. Some results, however, are new even for monotone $f$, e.g.
 Theorem 5.

\section{Notation, basic notions and standing assumptions}
\label{sec2}
As it was said  in the introduction $A$ is assumed to be a  self-adjoint operator on $L^2(E;m)$
generating a strongly continuous Markov semigroup $(T_t)_{t\ge 0}$ on $L^2(E;m)$ - so called {\em Dirichlet operators}.
Thus,   $(T_t)_{t\ge 0}$ is a contraction on $L^2(E;m)$ and, as a result, $(0,\infty)$
is a subset of the resolvent set for $(T_t)_{t\ge 0}$. By $(J_\alpha)_{\alpha>0}$ we denote the resolvent family for $(T_t)_{t\ge 0}$ 
on $L^2(E;m)$.  Throughout the paper, we assume that $(T_t)_{t\ge 0}$ is {\em transient}, i.e. there exists a strictly positive
function $g\in L^2(E;m)$ such that 
\[
J_0g:=\esssup_{n\ge 1} \int_0^n T_tg\,dt<\infty\quad m\mbox{-a.e.}
\]
Moreover, we assume that there exists the Green function $G$ for $-A$. The precise meaning of this condition
shall be explained in Section \ref{sec3}. 

In what follows we fix a strictly positive {\em excessive function} (see Section \ref{sec2.2}) $\rho$ and strictly positive  bounded Borel measurable  function $\varrho\in L^1(E;m)\cap L^2(E;m)$ such that $J_0\varrho$ is bounded and 
\begin{equation}
\label{eq2.1}
J_0\varrho\le \rho\quad m\text{-a.e.}
\end{equation}
For the existence of $\varrho$ see e.g. \cite[Lemma 6.1]{K:arxiv}.  Observe that  if $m(E)<\infty$ and  $E$ is Green bounded, i.e.
\[
\sup_{x\in E}\int_{E}G(x,y)\,m(dy)<\infty,
\]
then we may take $\varrho\equiv\rho\equiv const$.
We also consider  the following condition
\begin{enumerate}
\item[(A1)] for any $\underline u,\overline u\in L^1(E;\varrho\cdot m)$ such that $\underline u\le \overline u$ and  $f(\cdot,\underline u), f(\cdot,\overline u)\in L^1(E;\rho\cdot m)$ we have
\[
x\mapsto \sup_{y\in [\underline u(x),\overline u(x)]}|f(x,y)|\in L^1(E;\rho\cdot m).
\]
\end{enumerate}
Observe that (A1) is easily verified provided $f$ is non-increasing with respect to $y$
or there exists an increasing function $g:\BR\to \BR$ such that $c_1 g(y)\le |f(x,y)|\le c_2 g(y),\, x\in E,y\in\BR$
for some $c_1,c_2>0$.
By $\MM_\rho$ we denote the set of Borel measures on $E$ such that 
\[
\|\mu\|_{\rho}:= \int_E\rho\,d|\mu|<\infty.
\]
$\BB(E)$ stands for the set of Borel measurable function on $E$, and $\BB_b(E)$ (resp. $\BB^+(E)$)
is a subset of $\BB(E)$ consisting of bounded (resp. positive) functions.
For given Borel measure $\mu$ on $E$ and $\eta\in\BB(E)$ such that $\int_E|\eta|\,d|\mu|<\infty$ we let
\[
\langle\eta,\mu\rangle:=\int_E\eta\,d\mu.
\]
We also denote by $\eta\cdot\mu$ a Borel measure on $E$ defined as follows
\[
\langle \xi,\eta\cdot\mu\rangle:= \langle\xi\eta,\mu\rangle,\quad \xi\in\BB_b(E).
\]
By $(\EE,D(\EE))$, we denote a  symmetric Dirichlet form on $L^2(D;m)$ generated by $(A,D(A))$ defined as follows:
$D(\EE):= D(\sqrt{-A})$, and $\EE(u,v):= (\sqrt{-A}u,\sqrt{-A}v),\, u,v\in D(\EE)$. Throughout the paper, we assume that
$(\EE,D(\EE))$ is {\em regular}, i.e. $C_c(E)\cap D(\EE)$ is dense in $D(\EE)$ with the norm $\|\cdot\|_{\EE_1}:=\big(\EE(\cdot,\cdot)+(\cdot,\cdot)_{L^2(E;m)}\big)^{1/2}$ and in $C_c(E)$ with the uniform convergence norm. Self-adjoint Dirichlet operators with regular associated form $(\EE,D(\EE))$
shall be called regular.

\subsection{Elements of potential theory}
\label{sec2.1}
Let us remind that $Cap$ is a set function defined by \eqref{int.cap}.
We say that a property $P$ holds {\em quasi-everywhere} (q.e. for short) on $E$ if it holds except a set $B\subset E$
such that $Cap(B)=0$. 
An increasing sequence $\{F_n\}$ of closed subsets of $E$ is called a {\em nest} iff $Cap(E\setminus F_n)\rightarrow 0,\, n\rightarrow \infty$.
A function  $u$ on $E$ is called  {\em quasi-continuous} iff  for any $\varepsilon>0$ there exists closed set $F_\varepsilon\subset E$
such that $Cap(E\setminus F_\varepsilon)\le\varepsilon$ and $u_{|F_\varepsilon}$ is continuous.
By \cite[Theorem 2.1.2]{FOT}, $u$ is quasi-continuous if and only if there exists a nest $\{F_n\}$ such that  for any $n\ge 1$, $u_{|F_n}$ is continuous.
An increasing sequence $\{F_n\}$ of closed subsets of $E$ is called a {\em generalized nest} iff for every compact $K\subset E$,
$Cap(K\setminus F_n)\rightarrow 0,\, n\rightarrow \infty$. A  Borel measure $\mu$ on $E$ is called {\em smooth} iff it is absolutely continuous with respect to $Cap$, and 
there exists a generalized nest $\{F_n\}$ such that $|\mu|(F_n)<\infty,\, n\ge 1$. 
$\MM_{\rho}^0$ stands for a subset of $\MM_\rho$ consisting of smooth measures.
We say that  a measurable function $u$ on $E$ is {\em quasi-integrable} iff for every $\varepsilon>0$ there exists a closed set $F_\varepsilon\subset E$ such that $Cap(E\setminus F_\varepsilon)\le\varepsilon$ and $\mathbf{1}_{F_\varepsilon}u\in L^1(E;m)$. We say that a measurable function $u$ on $E$ is {\em locally quasi-integrable} iff for every compact $K\subset E$, $\mathbf{1}_{K}u$ is quasi-integrable.

\begin{proposition}
\label{prop2.2}
A measurable function $u$ on $E$ is locally quasi-integrable iff  there exists a generalized nest $\{F_n\}$ such that $\mathbf{1}_{F_n}u\in L^1(E;m),\, n\ge 1$.
\end{proposition}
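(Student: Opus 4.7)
My strategy is to prove each implication separately, using (i) the $\sigma$-compactness of $E$ (a consequence of local compactness and separability of $E$), (ii) local compactness to choose relatively compact open neighborhoods of compacts, and (iii) monotonicity of $Cap$.

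For $(\Leftarrow)$, let $\{F_n\}$ be a generalized nest with $\mathbf{1}_{F_n}u\in L^1(E;m)$. Given a compact $K\subset E$ and $\varepsilon>0$, I would pick an open $U$ with $K\subset U$ and $\overline U$ compact, and then take
\[
F_\varepsilon := F_n\cup(E\setminus U)
\]
with $n$ chosen large enough that $Cap(\overline U\setminus F_n)\le\varepsilon$. Then $F_\varepsilon$ is closed, $E\setminus F_\varepsilon=U\setminus F_n\subset\overline U\setminus F_n$, so $Cap(E\setminus F_\varepsilon)\le\varepsilon$ by monotonicity; and since $K\subset U$, we have $\mathbf{1}_{F_\varepsilon}\mathbf{1}_K u=\mathbf{1}_{F_n\cap K}u$, which is dominated by $\mathbf{1}_{F_n}u\in L^1(E;m)$. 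Hence $\mathbf{1}_K u$ is quasi-integrable.

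For $(\Rightarrow)$, I would write $E=\bigcup_k K_k$ with $K_k$ compact and $K_k\subset K_{k+1}^{\circ}$. From the quasi-integrability of each $\mathbf{1}_{K_k}u$ (and the standard trick of taking running unions of the sets provided at capacity-complement level $1/n$), I obtain, for each $k$, an increasing nest $\{F_n^k\}_{n\ge 1}$ of closed sets with $Cap(E\setminus F_n^k)\le 1/n$ and $\mathbf{1}_{F_n^k\cap K_k}u\in L^1(E;m)$. I then set
\[
H_n := \bigcup_{k=1}^{n}\bigl(F_n^k\cap K_k\bigr).
\]
Each $H_n$ is closed (finite union), $H_n\subset H_{n+1}$ because $F_n^k\subset F_{n+1}^k$ for each $k$, and $\mathbf{1}_{H_n}u\in L^1(E;m)$ as a finite sum of $L^1$ functions. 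For any compact $K$, choosing $k_0$ with $K\subset K_{k_0}$, for $n\ge k_0$ one has $H_n\supset F_n^{k_0}\cap K_{k_0}\supset F_n^{k_0}\cap K$, whence $K\setminus H_n\subset K\setminus F_n^{k_0}\subset E\setminus F_n^{k_0}$, giving $Cap(K\setminus H_n)\le 1/n\to 0$. Thus $\{H_n\}$ is a generalized nest with the required integrability.

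The main obstacle, modest as it is, lies in $(\Leftarrow)$: the definition of quasi-integrability requires closed sets with small-capacity complement in all of $E$, whereas a generalized nest only controls capacity inside compacts. Adjoining $E\setminus U$ to $F_n$ for a relatively compact neighborhood $U$ of $K$ is the natural bridge, and this step is precisely where local compactness of $E$ is used.
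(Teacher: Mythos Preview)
Your proof is correct and follows essentially the same approach as the paper. For $(\Leftarrow)$ your argument is identical to the paper's (the paper writes $F_\varepsilon=V^c\cup F_{n_\varepsilon}$ for a relatively compact open $V\supset K$). For $(\Rightarrow)$ the paper also exhausts $E$ by relatively compact open sets and takes finite unions of the closed sets coming from quasi-integrability; the only difference is that the paper picks, for each index $n$, a \emph{single} closed set $F_{k_n}$ at capacity level $1/n$ and sets $F_n=\bigcup_{j\le n}F_{k_j}$, whereas you build a full increasing nest $\{F_n^k\}_n$ for each compact $K_k$ before diagonalizing via $H_n=\bigcup_{k\le n}(F_n^k\cap K_k)$. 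Your construction is slightly more elaborate than necessary but entirely sound.
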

\begin{proof}
Sufficiency.  Consider a compact set  $K\subset E$. 
We shall show that $\mathbf1_K u$ is quasi-integrable. Fix  $\varepsilon>0$. 
Let $V$ be a relatively compact open set such that $K\subset V$.
By the assumption  there exists a closed set $F_{n_\varepsilon}$ such that $Cap(\overline V\setminus F_{n_\varepsilon})\le \varepsilon$
and $\mathbf{1}_{F_{n_\varepsilon}}u\in L^1(E;m)$. Set $F_\varepsilon= V^c\cup F_{n_\varepsilon}$. Then $\mathbf{1}_{F_\varepsilon\cap K}u\in L^1(E;m)$ and
\[
Cap(E\setminus F_\varepsilon)= Cap (V\setminus F_{n_\varepsilon})\le Cap (\overline V\setminus F_{n_\varepsilon})\le \varepsilon.
\]
Necessity.  Let $\{E_n\}$ be an increasing sequence of relatively compact open sets such that $\bigcup_{n\ge 1} E_n=E$.
By the assumption for every $n\ge 1$ there exists closed $F_{k_n}\subset E_n$ such that $Cap(E_n\setminus F_{k_n})\le \frac{1}{n}$
and $\mathbf{1}_{F_{k_n}}u\in L^1(E;m)$. Set
\[
F_n=\bigcup_{j=1}^n F_{k_j}.
\]
Clearly $\{F_n\}$ is an increasing sequence of closed sets and $\mathbf{1}_{F_n}u\in L^1(E;m),\, n\ge 1$.  Let $K\subset E$
be a compact set. Then there exists $n_0\ge 1$ such that $K\subset E_{n},\, n\ge n_0$.  For $n\ge n_0$,
\[
Cap(K\setminus F_n)\le Cap (E_n\setminus F_{k_n})\le \frac1n.
\]
Therefore, $\{F_n\}$ is a generalized nest. 
\end{proof}
Since we assumed that $(T_t)$ is transient, there exists a strictly positive function $g$ on $E$
such that
\[
\int_E|u|\,g\,dm\le \big(\EE(u,u)\big)^{1/2},\quad u\in D(\EE).
\]
Therefore, there exists the  extension $D_e(\EE)$ of $D(\EE)$ such that: $D_e(\EE)\subset L^1(E;g\cdot m)$,
$D(\EE)=D_e(\EE)\cap L^2(E;m)$, and for any $u\in D_e(\EE)$, there exists a sequence $\{u_n\}\subset D(\EE)$ that 
is a Cauchy sequence in the norm $\|\cdot\|_{\EE}$ and satisfies $u_n\to u$ in $L^1(E;g\cdot m)$ and $\EE(u_n,u_n)\to \EE(u,u)$  (see \cite[Theorem 1.5.1, Theorem 1.5.2]{FOT}). Clearly, $(\EE,D_e(\EE))$ is a Hilbert space. By \cite[Theorem 2.1.7]{FOT}, any $u\in D_e(\EE)$
possesses an $m$-version which is quasi-continuous. In what follows for $u\in D_e(\EE)$ we denote by $\tilde u$ quasi-continuous
$m$-version of $u$.

\subsection{Probabilistic potential theory}
\label{sec2.2}
Let $\partial$ be either an isolated point added to $E$ - provided $E$ is compact - or
a one-point compactification of $E$ - provided $E$ is not compact. 
We let $E_\partial:= E\cup\{\partial\}$.
Throughout the paper, we adopt the 
convention that whenever $f$ is a function defined on $B\subset E$, then it is automatically extended  to $B\cup\{\partial\}$
by putting $f(\partial)=0$. Let
\[
\Omega:=\{\omega:[0,\infty)\to E_\partial: \omega\text{  is c\'adl\'ag, and   }\omega(s)=\partial,\, s\ge t \text{  whenever  }\omega(t)=\partial\}.
\]
Recall that a function  $\omega: [0,\infty)\to E_\partial$ is called {\em c\'adl\'ag} if it is right-continuous on $[0,\infty)$
and left-limited on $(0,\infty)$. We endow $\Omega$ with the Skorohod topology $d$. Then $(\Omega,d)$ is a separable metric space (see \cite[Section 12]{bil}). 
We also consider {\em shift operators} $(\theta_t)_{t\ge 0}$:
\[
\theta_t:\Omega\to \Omega,\quad \theta_t(\omega)(s):=\omega(s+t),\quad s,t\ge 0,
\]
and a family of {\em projection operators} (also called the {\em canonical process}) $X_t:\Omega\to E_{\partial},\, t\ge 0$, $X_t(\omega):= \omega(t),\, \omega\in \Omega$.
 Let $\FF^0_t:=\sigma(X_s^{-1}(B): s\le t,\, B\in\BB(E_\partial))$. By \cite[Theorem 7.2.1]{FOT}, there exists a family $(P_x)_{x\in E_\partial}$
 of Borel probability measures on $\Omega$ and a right-continuous filtration $(\FF_t)_{t\ge 0}$ on $\Omega$
 such that $\mathbb X:=\big((P_x)_{x\in E_\partial},(\FF_t)_{t\ge 0}\big)$ is a {\em Hunt process} on $E_\partial$
 associated with $(A,D(A))$, i.e. for any $f\in\BB_b(E)\cap L^2(E;m)$,
 \[
 T_tf(x)=\int_\Omega f(X_t(\omega))\,P_x(d\omega),\quad t\ge 0,\, m\text{-a.e.}
 \]
 By the very definition of a Hunt process, $\FF^0_t\subset \FF_t,\, t\ge 0$.
 The question of uniqueness of $\mathbb X$ is treated in \cite[Theorem 4.2.8]{FOT}.
 Let $\zeta$ stand for the lifetime of process $\mathbb X$, i.e.
 \[
 \zeta(\omega):=\inf\{t\ge 0: X_t(\omega)=\partial\}.
 \]
 We let $\FF_\infty:= \sigma(\FF_t: t\ge 0)$. For $t\in [0,\infty]$, we denote by $b\FF_t$ a set of bounded real valued $\FF_t$
 measurable functions. In what follows, we consider the following notation
 \[
\mathbb E_xF:=  \int_\Omega F(\omega)\,P_x(d\omega),\quad F\in b\FF_\infty
 \]
We define for  any $f\in \BB^+(E)$,
\[
P_tf(x):=\mathbb E_xf(X_t),\quad R_\alpha f(x):=\mathbb E_x\int_0^\infty e^{-\alpha s} f(X_s)\,ds,\quad t\ge 0,\,\alpha\ge 0,\, x\in E.
\] 
We let $R:=R_0$. We say that a  property $P$ holds {\em almost surely} (a.s.) (resp. {\em quasi almost surely} (q.a.s.)) on $\Omega$
if it holds $P_x$-a.s. for any $x\in E$ (resp. for q.e. $x\in E$). A Borel measurable positive function $f$ on $E$ is called {\em $\alpha$-excessive},
where $\alpha\ge 0$, if 
\[
\sup_{t>0}e^{-\alpha t}P_tf(x)=f(x),\quad x\in E.
\]
In case $\alpha=0$, we just say that $f$ is excessive.

Recall that a family $(A_t)_{t\ge 0}$ of $\mathbb R\cup\{+\infty\}$ valued functions on $\Omega$
is called an {\em additive functional of}  $\mathbb X$ if there exist  $\Lambda\in\FF_\infty$
and $N\subset E$ such that
\begin{enumerate}
\item[(1)] $\theta_t(\Lambda)\subset \Lambda,\, t\ge 0$, $Cap(N)=0$, $P_x(\Lambda)=1,\, x\in E\setminus N$,
\end{enumerate}
furthermore, for any $\omega\in\Lambda$,
\begin{enumerate}
\item[(2)] $A_{t+s}(\omega)=A_s(\omega)+A_{t}(\theta_s\omega),\, s,t\ge 0$,
\item[(3)] $|A_t(\omega)|<\infty,\, t\in [0,\zeta(\omega))$,
\item[(4)] $t\longmapsto A_t(\omega)$ is c\'adl\'ag on $[0,\zeta(\omega))$,
\item[(5)] $A_t(\omega)=A_{\zeta(\omega)}(\omega),\, t\ge\zeta(\omega)$,
\item[(6)] for any $t\ge 0$, $A_t$ is $\FF_t$-measurable, and $A_0(\omega)=0$.
\end{enumerate}
$\Lambda$ is called a {\em defining set} of $(A_t)$, and $N$ is called an {\em exceptional set} of $(A_t)$.
An additive functional (AF for short) $(A_t)$  of $\mathbb X$ is said to be  {\em positive} if $A_t(\omega)\ge 0,\, t\ge 0,\, \omega\in\Lambda$.
We say that an AF $(A_t)$ of $\mathbb X$ is continuous if $t\longmapsto A_t(\omega)$ is continuous on $[0,\infty)$
for any $\omega\in \Lambda$.  In what follows, we frequently use the notion of positive continuous additive functionals (PCAF for short) of $\mathbb X$.
We say that $(A_t)$ is a {\em  martingale additive functional}  (MAF for short) of $\mathbb X$ if it is an AF of $\mathbb X$ and
an $(\FF_t)$-martingale under measure $P_x$ for any $x\in E\setminus N$.  
Analogously, we say that $(A_t)$ is a {\em  local MAF}  of $\mathbb X$ if it is an AF of $\mathbb X$, and
it is a local $(\FF_t)$-martingale under measure $P_x$ for any $x\in E\setminus N$. By \cite[Theorem 5.1.4]{FOT} there is a ono-to-one 
correspondence between PCAFs of $\mathbb X$ and positive smooth measures - so called Revuz duality.
PCAF $(A_t)$ of $\mathbb X$ and positive smooth measure $\nu$ on $E$ are in Revuz duality if for any positive $f\in\BB(E)$,
\[
\mathbb E_x\int_0^\infty f(X_r)\,dA_r=\int_EG(x,y)f(y)\,\nu(dy),\quad x\in E\setminus N.
\]
By \cite[Theorem 5.1.3, Theorem 5.1.4]{FOT}, there exists a unique PCAF of $\mathbb X$ satisfying the above identity for any $f\in \BB^+(E)$.
We shall denote it by $A^\nu$. We say that $\nu$ is {\em strictly smooth} if the exceptional set $N$ of $A^\nu$
is empty.

\begin{proposition}
\label{prop2.1}
A positive Borel measure $\mu$ on $E$ is smooth iff it is absolutely continuous with respect to $Cap$ and  there exists a strictly positive quasi-continuous function $u$ on $E$ such that $\int_Eu\,d\mu<\infty$.
\end{proposition}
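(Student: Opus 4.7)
I prove the two implications separately; the backward implication is a direct capacity argument, while the forward implication requires a potential-theoretic construction.

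\emph{Sufficiency ($\Leftarrow$).} Suppose $\mu\ll Cap_A$ and $u$ is strictly positive, quasi-continuous and $\int u\,d\mu<\infty$. By quasi-continuity, fix a nest $\{F_n\}$ on which $u$ is continuous, together with a compact exhaustion $K_1\subseteq K_2\subseteq\dots$ of $E$. For $n\geq 1$ and $1\leq j\leq n$ set $c_{n,j}:=\inf_{K_j\cap F_n}u$ (with $\inf\emptyset=+\infty$); continuity and pointwise strict positivity of $u$ on the compact $K_j\cap F_n$ force $c_{n,j}>0$. Put $\epsilon_n:=n^{-1}\min_{1\leq j\leq n}c_{n,j}>0$ and
\[
G_n:=\{x\in F_n:u(x)\geq\epsilon_n\},
\]
which is closed in $E$. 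Chebyshev gives $\mu(G_n)\leq\epsilon_n^{-1}\int u\,d\mu<\infty$. For the generalized-nest condition, fix a compact $K\subseteq K_{j_0}$ and take $n\geq j_0$: then $\epsilon_n\leq n^{-1}c_{n,j_0}<c_{n,j_0}$, hence $K\cap F_n\cap\{u<\epsilon_n\}=\emptyset$, so $K\setminus G_n\subseteq K\setminus F_n$ and $Cap_A(K\setminus G_n)\leq Cap_A(E\setminus F_n)\to 0$.

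\emph{Necessity ($\Rightarrow$).} Let $\mu$ be smooth with generalized nest $\{F_n\}$, $\mu(F_n)<\infty$. Pick a regular exhaustion $\{E_n\}$ of $E$ by relatively compact open sets and set $K_n:=F_n\cap\overline{E_n}$: these are compact with $\mu(K_n)\leq\mu(F_n)<\infty$, and $\bigcup_n K_n$ covers $E$ up to a $Cap_A$-null set (on which $\mu$ does not charge, as $\mu\ll Cap_A$). Decompose $\mu=\sum_n\lambda_n$ with $\lambda_n:=\mathbf{1}_{K_n\setminus K_{n-1}}\mu$, which are finite compactly supported measures. Associate to each $\lambda_n$ a bounded quasi-continuous function $h_n\in D(\EE)$ strictly positive on $\mathrm{supp}\,\lambda_n$ --- the natural candidate is the Riesz representative of $\lambda_n$ in $(D(\EE),\EE+(\cdot,\cdot)_{L^2(E;m)})$, after a further refinement of the nest ensuring each $\lambda_n$ has finite $1$-order energy. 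Define
\[
u:=v+\sum_n c_n h_n,
\]
with $v$ a strictly positive bounded quasi-continuous function on $E$ satisfying $\int v\,d\mu<\infty$ (e.g. $v=J_0\varrho'$ for a suitably small Borel $\varrho'>0$) and $c_n>0$ chosen so that $\sum_n c_n\|h_n\|_\infty<\infty$ and $\sum_n c_n\int h_n\,d\mu<\infty$. Then $u$ is quasi-continuous, $\mu$-integrable, and strictly positive quasi-everywhere; modifying $u$ on the $Cap_A$-null exceptional set yields strict positivity on all of $E$ without altering $\int u\,d\mu$.

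\emph{Main obstacle.} The difficulty lies squarely in the necessity direction: simultaneously arranging the $h_n$ to be quasi-continuous, bounded, $\mu$-integrable, and strictly positive where required, and engineering the background $v$ with $\int v\,d\mu<\infty$. The $1$-potential is the natural quasi-continuous object (via membership in $D(\EE)$), but it demands finite $1$-order energy of its input measure, which forces a refinement of the nest so as to split $\mu$ into pieces amenable to the construction. Similarly, producing strictly positive $v$ with $\int v\,d\mu<\infty$ relies on the $m$-a.e.\ finiteness of the Green potentials $R\lambda_n$ of the compactly supported pieces (granted by transience), permitting $\varrho'$ to be chosen strictly positive yet small where $R\mu$ is large.
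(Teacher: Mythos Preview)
Your sufficiency argument is correct and follows the same idea as the paper's (intersect the nest with a compact exhaustion and use Chebyshev); the paper simply takes $\tilde F_n=\overline{E_n}\cap F_n$ directly, without the extra $\epsilon_n$-thresholding, since $\inf_{\overline{E_n}\cap F_n}u>0$ already by continuity on a compact set.

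The necessity direction, however, is not a proof but a plan with real gaps that you yourself flag. Concretely: (i) the ``Riesz representative'' $U_1\lambda_n$ requires $\lambda_n$ to have finite $1$-energy, and the asserted ``further refinement of the nest'' to achieve this is not carried out; (ii) even granting that, $U_1\lambda_n$ is not bounded in general, so your $h_n$ are not as claimed; (iii) the existence of a strictly positive quasi-continuous $v$ with $\int v\,d\mu<\infty$ is exactly what the proposition asks for, and writing $v=J_0\varrho'$ and saying $\varrho'$ can be ``chosen small where $R\mu$ is large'' presupposes control on $R\mu$ (or on $\sum R\lambda_n$) that you have not established; (iv) modifying $u$ on a $Cap_A$-null set to force strict positivity everywhere can destroy quasi-continuity.

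The paper avoids all of this by a single probabilistic construction: with $\phi>0$ bounded, $\phi\in L^1(E;m)$ and $R\phi$ bounded, set
\[
\eta(x):=\mathbb E_x\int_0^\zeta \phi(X_r)\,e^{-A^\mu_r}\,dr,
\]
where $A^\mu$ is the PCAF of $\mu$. Then $\eta$ and $R\phi$ are quasi-continuous and strictly positive, and the elementary inequality $\mathbb E_x\int_0^\zeta \eta(X_r)\,dA^\mu_r\le R\phi(x)$ together with Revuz duality gives
\[
\int_E \eta\,R\phi\,d\mu=\int_E\Big(\mathbb E_x\int_0^\zeta \eta(X_r)\,dA^\mu_r\Big)\phi(x)\,m(dx)\le \|R\phi\|_\infty\|\phi\|_{L^1(E;m)}<\infty,
\]
so $u=\eta\,R\phi$ works in one stroke. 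This bypasses finite-energy refinements, boundedness of potentials, and any delicate summation; if you want a complete argument, this is the route to take.
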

\begin{proof}
Sufficiency. Since $u$ is quasi-continuous there exists a nest $\{F_n\}$ such that $u_{|F_n}$ is quasi-continuous for every $n\ge 1$.
Let $\{E_n\}$ be an increasing  sequence of relatively compact open  sets such that $\bigcup_{n\ge 1}E_n=E$. Set $\tilde F_n=\overline E_n\cap F_n$. Obviously, $\{\tilde F_n\}$ is a generalized nest. Moreover, by continuity of $u_{|F_n}$ and compactness of $\overline E_n$,
$\inf_{\tilde F_n}u=c_n>0$. Thus
\[
\mu(\tilde F_n)\le \frac{1}{c_n}\int_E u\,d\mu<\infty,\quad n\ge 1.
\]
Necessity.  Let $\phi$ be a strictly positive bounded   function such that $R\phi$ is bounded (see \cite[Corollary 1.3.6]{Oshima}) and $\phi\in L^1(E;m)$.
Set 
\[
\eta(x):=\mathbb E_x\int_0^\zeta \phi(X_r)e^{-A^\mu_r}\,dr,\quad x\in E.
\]
By \cite[Lemma 5.1.5]{FOT}, $\eta, R\phi$ are quasi-continuous, and
\[
\mathbb E_x\int_0^\zeta \eta(X_r)\,dA^\mu_r\le R\phi(x),\quad \mbox{q.e.}
\]
From this and \cite[Theorem 5.1.3]{FOT},
\[
\int_E \eta R\phi\,d\mu=\int_E \Big(\mathbb E_x\int_0^\zeta \eta (X_r)\,dA^\mu_r\Big)\phi(x)\,m(dx)\le \langle R\phi,\phi\rangle\le \|R\phi\|_\infty\|\phi\|_{L^1(E;m)}.
\]
Function $u=\eta R\phi$ fullfils  the requirements. 
\end{proof}

\begin{corollary}
\label{cor2.1}
Let $u$ be a measurable function on $E$. Then $u$ is locally quasi-integrable iff
\[
P_x(\int_0^t|u(X_r)|\,dr<\infty,\, t<\zeta)=1,\quad \mbox{q.e.}
\]
\end{corollary}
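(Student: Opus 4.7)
The plan is to read the result through the Revuz correspondence (Theorem 5.1.4 of \cite{FOT}) between positive continuous additive functionals and positive smooth measures, matching the candidate PCAF $A_t := \int_0^t |u(X_r)|\,dr$ with the candidate Revuz measure $|u|\cdot m$. Finiteness of $A_t$ for all $t<\zeta$ q.a.s.\ is precisely the defining property of a PCAF, while smoothness of $|u|\cdot m$ is, by Proposition \ref{prop2.2}, equivalent to local quasi-integrability of $u$. So the two conditions in the corollary will become, via Revuz, two equivalent ways of describing the same object.

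For the necessity direction, assume $u$ is locally quasi-integrable. By Proposition \ref{prop2.2}, fix a generalized nest $\{F_n\}$ with $\mathbf{1}_{F_n}|u|\in L^1(E;m)$, and set $\sigma_n := \inf\{t\ge 0: X_t\notin F_n\}$. Each finite smooth measure $\mathbf{1}_{F_n}|u|\cdot m$ is in Revuz duality with the PCAF $\int_0^\cdot \mathbf{1}_{F_n}|u|(X_r)\,dr$, hence
\[
\mathbb E_x\int_0^\zeta \mathbf{1}_{F_n}|u|(X_r)\,dr = R(\mathbf{1}_{F_n}|u|)(x),
\]
and the right-hand side is excessive and finite $m$-a.e., so finite q.e. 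Consequently $\int_0^{\sigma_n}|u(X_r)|\,dr<\infty$ $P_x$-a.s.\ for q.e.\ $x$. The remaining, and technically most delicate, ingredient is to see $\sigma_n\uparrow\zeta$ q.a.s., which I would extract from the definition of a generalized nest together with the Hunt-process fact that, on $\{t<\zeta\}$, the trajectory $(X_r)_{0\le r\le t}$ stays in a compact subset of $E$. For the sufficiency direction, the hypothesis says that $A_t=\int_0^t|u(X_r)|\,dr$ is additive, continuous, positive, adapted, and finite on $[0,\zeta)$ q.a.s.; so $A$ is a PCAF of $\mathbb X$. By Theorem 5.1.4 of \cite{FOT} there is a unique positive smooth measure $\nu$ with $\mathbb E_x\int_0^\zeta f(X_r)\,dA_r=\int_EG(x,y)f(y)\,\nu(dy)$, while direct computation gives $\mathbb E_x\int_0^\zeta f(X_r)\,dA_r=R(f|u|)(x)=\int_EG(x,y)f(y)|u(y)|\,m(dy)$; uniqueness then forces $\nu=|u|\cdot m$. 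Smoothness of $|u|\cdot m$ provides, by definition, a generalized nest integrating $|u|$, and Proposition \ref{prop2.2} closes the argument.

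The main obstacle is the claim $\sigma_n\uparrow\zeta$ q.a.s.\ in the necessity argument: the generalized-nest hypothesis controls only capacities of $K\setminus F_n$ on compacts $K$, so one must localise to such a $K$ (e.g.\ by intersecting with the exit time of a relatively compact open set exhausting $E$) and then invoke the quasi-left-continuity of the Hunt process to push the hitting times up to $\zeta$. Every other step, namely identifying PCAF/Revuz-measure pairs, verifying excessiveness of Green potentials, and upgrading $m$-a.e.\ finiteness to q.e.\ finiteness for excessive functions, is routine within the framework of Section \ref{sec2.2}.
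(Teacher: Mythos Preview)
Your proposal is correct and follows exactly the route the paper indicates: the Revuz correspondence of \cite[Theorem 5.1.4]{FOT} together with Proposition \ref{prop2.2}. The paper's proof is a single sentence citing these two results, and your argument is a faithful unpacking of that citation.

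One remark on the necessity direction: the detour through the hitting times $\sigma_n$ and the claim $\sigma_n\uparrow\zeta$ q.a.s.\ is avoidable, and avoiding it removes what you yourself flag as the main obstacle. Once Proposition \ref{prop2.2} tells you that $|u|\cdot m$ is smooth, \cite[Theorem 5.1.4]{FOT} hands you directly a PCAF $A$ in Revuz duality with $|u|\cdot m$; the uniqueness part of that correspondence (or the computation you already do in the sufficiency direction) identifies $A_t=\int_0^t|u(X_r)|\,dr$, and the defining property (3) of a PCAF in Section \ref{sec2.2} gives finiteness on $[0,\zeta)$ for $P_x$-a.e.\ path, q.e.\ $x$. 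This symmetric use of Theorem 5.1.4 in both directions is presumably what the paper's one-line proof intends, and it spares you the localisation and quasi-left-continuity argument.
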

\begin{proof}
Follows from \cite[Theorem 5.1.4]{FOT} and Proposition \ref{prop2.2}.
\end{proof}

\section{Green's functions}
\label{sec3}
We say that  a Borel measurable function $G:E\times E\to \mathbb R^+\cup\{+\infty\}$
is the Green function for $-A$ if
\[
Rf(x)=\int_E G(x,y)f(y)\,m(dy),\quad x\in E,\, f\in\BB^+(E),
\]
and  $G(x,\cdot), G(\cdot,y)$ are excessive for any $x,y\in E$.
By \cite[Theorem 4.2.4]{FOT}, there exists the Green function for $-A$ if and only if $P_tf(x)=0$
for any $x\in E$ and $t>0$ provided $f\in\BB^+(E)$ and $\int_Ef\,dm=0$. Furthermore, by \cite[Lemma 4.2.4]{FOT}, there exists the Green function for $-A$ if and only if $R_\alpha f(x)=0$ for any $x\in E$ and $\alpha>0$ provided $f\in\BB^+(E)$ and $\int_Ef\,dm=0$.

At this point, we would like to formulate a general  condition
guaranteeing the existence of Green's function.  For $f\in\BB(E)$, we let $|f|_\infty:= \sup_{x\in E}|f(x)|$.

\begin{proposition}
\label{prop.xeg}
Assume that $E$ is complete. Furthermore, suppose that 
\begin{enumerate}[(i)]
\item  $T_t(C_c(E))\subset C_b(E),\, t>0$;
\item $(J_\alpha)_{\alpha>0}$ is strongly Feller: for some $\alpha>0$ (hence for any $\alpha>0$)
$J_\alpha(L^2(E;m)\cap L^\infty(E;m))\subset C_b(E)$;
\item there exists a set $\mathcal C\subset \{f\in D(A)\cap C_b(E): Af \text{ is bounded}\}$  such that any function in $C_c(E)$ is the limit in the supremum norm of functions 
from $\mathcal C$. 
\end{enumerate}
Then there exists the Green function for $-A$.
\end{proposition}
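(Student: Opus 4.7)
The plan is to apply the criterion \cite[Lemma 4.2.4]{FOT}: for some (equivalently, every) $\alpha > 0$, show $R_\alpha f(x) = 0$ for every $x \in E$ whenever $f \in \BB^+(E)$ satisfies $\int_E f\, dm = 0$. Since such $f$ vanishes $m$-a.e., the claim is equivalent to absolute continuity of the probabilistic resolvent kernel $R_\alpha(x,\cdot)$ with respect to $m$ at every point $x \in E$. By monotone convergence and the Radon property of $m$, I would first reduce to $f = \mathbf{1}_N$ with $N$ Borel, $m(N) = 0$, and $N$ contained in a relatively compact open set (so that $\mathbf{1}_N \in \BB_b(E) \cap L^2(E;m)$).

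The core of the argument is to upgrade the identity $J_\alpha g = R_\alpha g$, which holds $m$-a.e. by the association \cite[Theorem 7.2.1]{FOT} of $\mathbb{X}$ with $(A, D(A))$, to a pointwise identity on all of $E$ for $g$ in a rich enough class. Condition (i) gives $T_t\phi \in C_b(E)$ for $\phi \in C_c(E)$, and since $\|T_t\phi\|_\infty \le \|\phi\|_\infty$, the integral $\int_0^\infty e^{-\alpha t} T_t\phi\, dt$ converges uniformly to a function $\hat J_\alpha\phi \in C_b(E)$ that is a continuous representative of $J_\alpha\phi$. Condition (ii) extends this: $J_\alpha f$ has a continuous representative $\hat J_\alpha f \in C_b(E)$ for every $f \in \BB_b(E) \cap L^2(E;m)$. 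To identify $\hat J_\alpha f$ with the probabilistic $R_\alpha f$ on all of $E$, I would invoke condition (iii) together with the Dynkin-type identity $g = J_\alpha(\alpha g - Ag)$ available for $g \in \mathcal{C}$, and combine it with completeness of $E$ and the fine continuity of $\alpha$-excessive functions of a Hunt process, in order to propagate the $m$-a.e. equality from a dense subset to every point of $E$.

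Once $\hat J_\alpha f = R_\alpha f$ pointwise on $E$ for $f \in \BB_b(E) \cap L^2(E;m)$, the choice $f = \mathbf{1}_N$ with $m(N) = 0$ forces $\mathbf{1}_N = 0$ in $L^2$, hence $\hat J_\alpha \mathbf{1}_N \equiv 0$, hence $R_\alpha \mathbf{1}_N \equiv 0$ everywhere. The criterion of \cite[Lemma 4.2.4]{FOT} then delivers the Green function for $-A$.

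The main obstacle I expect is the pointwise upgrade $\hat J_\alpha f(x) = R_\alpha f(x)$ for every $x \in E$: the analytic strong-Feller property of $J_\alpha$ only controls the $m$-class of $J_\alpha f$, while the probabilistic $R_\alpha f$ is defined at every point but only guaranteed to be finely continuous. Threading completeness of $E$ together with conditions (i) and (iii) to close this gap is the heart of the technical work; the remaining steps (the FOT criterion, the reduction to $\mathbf{1}_N$, and monotone convergence) are routine once the pointwise identification is in hand.
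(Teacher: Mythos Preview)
Your reduction to \cite[Lemma 4.2.4]{FOT} is correct, and you are right that the entire difficulty lies in the pointwise upgrade $\hat J_\alpha f(x) = R_\alpha f(x)$ for every $x \in E$. However, the mechanism you propose for this upgrade---fine continuity of $\alpha$-excessive functions, a Dynkin identity for $g \in \mathcal{C}$, and density of $\mathcal{C}$ in $C_c(E)$---does not close the gap. The Hunt process $\mathbb{X}$ supplied by \cite[Theorem 7.2.1]{FOT} is determined only up to a properly exceptional set, and nothing in your sketch rules out that $R_\alpha f$ fails to coincide with the continuous version $\hat J_\alpha f$ at some points of $E$; fine continuity is relative to the fine topology of that particular $\mathbb{X}$, and an $m$-a.e.\ equality between a finely continuous and a metrically continuous function does not force pointwise agreement. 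The identity $g = J_\alpha(\alpha g - Ag)$ is a statement about $m$-classes and says nothing about the probabilistic $R_\alpha$ at a fixed point.

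The paper's route is different in kind: rather than upgrading the given $\mathbb{X}$, it \emph{constructs} a Hunt process with absolutely continuous resolvent from scratch. Condition (i) yields a pointwise-defined kernel $S_t f(x) := \overline{T_t f}(x)$ on $C_c(E)$; condition (iii) is then used---via the bound $|S_t f(x) - f(x)| \le t\|Af\|_{L^\infty(E;m)}$ for $f \in \mathcal{C}$ followed by uniform approximation---to obtain $|S_t f - f|_\infty \to 0$ on $C_c(E)$, and after an extension to $C_b(E)$ via \cite[Lemma 2.8, page 181]{BH} this, together with completeness of $E$, feeds into \cite[Theorem 2.9, page 150]{Casteren} to produce a Hunt process whose resolvent $U_\alpha$ satisfies $U_\alpha f = \overline{J_\alpha f}$ at every point by construction. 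Condition (ii) then gives $U_\alpha(x,\cdot) \ll m$ for all $x$. So the roles of (iii) and completeness are to verify the hypotheses of an external construction theorem, not to perform an in-place pointwise upgrade on a pre-existing $\mathbb{X}$; your plan is missing precisely this construction step.
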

\begin{proof}
For $f\in C_c(E)$, we set
\begin{equation}
\label{eq.xeg2}
S_tf(x):= \overline{T_t f}(x),\quad x\in E,
\end{equation}
where $\overline{T_t f}$ is  continuous $m$-version of $T_t f$. Clearly, $S_t$ is a linear operator, and for any
positive $f\in C_c(E)$, we have $S_tf(x)\ge 0,\, x\in E$. By the Riesz theorem, there exists a Borel measure $\sigma(t,x,dy)$
such that for any $f\in C_c(E)$,
\[
S_tf(x)=\int_E f(y)\sigma(t,x,dy),\quad x\in E,\, t>0.
\] 
Thus, we may extend operator $S_t$ to $S_t:\BB_b(E)\to \BB_b(E)$.  
By a standard semigroup identity, we find that for any $f\in \mathcal C$,
\[
|S_tf(x)-f(x)|\le \int_0^t\int_E\sigma(r,x,dy)|Af(y)|\,dr\le t\|Af\|_{L^\infty(E;m)},\quad t>0,\, x\in E.
\]
Now, let $f\in C_c(E)$ and let $(f_n)\subset \mathcal C$ converge uniformly to $f$. Then
\begin{align*}
|S_tf(x)-f(x)|&\le |S_tf(x)-S_tf_n(x)|+|S_tf_n(x)-f_n(x)|+|f_n(x)-f(x)|\\&
\le |S_tf_n(x)-f_n(x)|+2|f_n-f|_\infty.
\end{align*}
Letting $t\to 0^+$ and then $n\to \infty$, we obtain that for any $f\in C_c(E)$, 
\[
|S_tf-f|_\infty \to 0\quad\text{as}\,\, t\to 0^+.
\]
By \cite[Lemma 2.8, page 181]{BH}, for any $f\in C_b(E)$ and any compact $K\subset E$,
\begin{equation}
\label{eq.xeg3}
\sup_{x\in K}|S_tf(x)-f(x)| \to 0\quad\text{as}\,\, t\to 0^+.
\end{equation}
By \cite[Theorem 2.9, page 150]{Casteren}, there exists a Hunt proces $((Q_x)_{x\in E}, (\HH_t)_{t\ge 0})$ associated with the semigroup kernel $(S_t)$.
Since $S_t(C_b(E))\subset C_b(E),\, t>0$, we easily deduce that for any $\alpha>0$ and $f\in C_b(E)$,
\[
U_\alpha f(x):= \int_0^\infty e^{-\alpha t}S_tf(x)\,dt,\quad x\in E
\]
belongs to $C_b(E)$. 
From this and \eqref{eq.xeg2}, we deduce that  for any $f\in C_c(E)$, $R_\alpha f(x)=\overline{J_\alpha f}(x),\, x\in E$, where $\overline{J_\alpha f}$
is the continuous $m$-version of $J_\alpha f$. Fix $x\in E$.
By the Riesz theorem, there exists a measure $\upsilon_\alpha(x,dy)$
such that $U_\alpha f(x)=\int_E f(y)\upsilon(x,dy),\, \alpha>0,\, f\in C_b(E)$. As a result, $U_\alpha$ may be extended
as the mapping $U_\alpha:\BB_b(E)\to \BB_b(E)$. We easily find that for any $f\in\BB_b(E)\cap L^2(E;m)$
\[
R_\alpha f(x)=\overline{J_\alpha f}(x),\quad x\in E.
\]
Consequently,  we obtain that $\upsilon_\alpha(x,dy)=u_\alpha(x,y)m(dy),\, x\in E,\, \alpha>0$.
\end{proof}

For sufficient conditions ensuring  condition (i) of the above proposition see e.g. \cite{BBCK,CK,DK} 
Condition (ii) of the above proposition is satisfied, for example, if for any $t>0$ there exists $c_t>0$
such that
\begin{equation}
\label{eq.apxttt}
\|T_t f\|_{L^\infty(E;m)}\le c_t\|f\|_{L^1(E;m)},\quad f\in L^2(E;m)\cap L^1(E;m),
\end{equation}
and furthermore, there exists a dense set $\mathcal C\subset L^1(E;m)$ such that $T_t(\mathcal C)\subset C_b(E),\, t>0$
(the last condition holds whenever (i) is satisfied).
Condition \eqref{eq.apxttt} is satisfied provided there exist $\delta\ge 0$ and $\nu>0$
such that
\[
\|u\|_{L^2(E;m)}^{2+4/\nu}\le C[\EE(u,u)+\delta\|u\|^2_{L^2(E;m)}]\|u\|_{L^1(E;m)}^{4/\nu},\quad u\in D(\EE)\cap L^1(E;m).
\]
(see \cite[Theorem 2.1]{CKS}). The above condition is satisfied for  \eqref{eq1.3a0} (see e.g. \cite[Theorem 1.2]{BBCK}).

Let $\varphi: \BR^+\to\BR^+$ be a strictly increasing function  with $\varphi(0)=0$.
Now, we focus on the operator \eqref{eq1.3a0}.
Consider the following conditions:
\begin{enumerate}[(A)]
\item there exist $c_1,c_2>0$  such that
\[
c_1\le a(x,y)\le c_2,\quad x,y\in\BR^d,
\]
\item there exists $c_3>0$ such that 
\[
\int_0^r\frac{s}{\varphi(s)}\,ds\le c_3\frac{r^2}{\varphi(r)},\quad r>0,
\]
\item there exist $c_4,c_5,\delta_1, \delta_2>0$ such that
\[
c_4\Big(\frac Rr\Big)^{\delta_1}\le \frac{\varphi(R)}{\varphi(r)}\le c_5\Big(\frac Rr\Big)^{\delta_2},\quad 0<r\le R.
\]
\end{enumerate}

By  \cite[Theorem 1.2]{CK}, we get the following result.

\begin{proposition}
Let $A$ be the operator of the form \eqref{eq1.3a0}
satisfying (A)--(C). Then there exists the Green function for $-A$.
\end{proposition}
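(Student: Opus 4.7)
The strategy is to apply Proposition \ref{prop.xeg} to the operator defined by \eqref{eq1.3a0}, using the heat-kernel results of Chen--Kumagai. By \cite[Theorem 1.2]{CK}, conditions (A)--(C) guarantee that the Markov semigroup $(T_t)$ admits a jointly continuous, symmetric transition density $p_t(x,y)$ on $(0,\infty)\times\BR^d\times\BR^d$ with sharp two-sided estimates in terms of $\varphi$ and $|x-y|$.

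First I would verify hypothesis (i): for $f\in C_c(\BR^d)$ with support $K$, the representation $T_tf(x)=\int_K p_t(x,y)f(y)\,dy$, combined with joint continuity of $p_t$ and the upper bound from \cite{CK}, yields by dominated convergence that $T_tf\in C_b(\BR^d)$. Hypothesis (ii) then follows from the identity $J_\alpha f(x)=\int_0^\infty e^{-\alpha t}T_tf(x)\,dt$ together with the on-diagonal upper bound from \cite{CK}, which supplies the $L^\infty$-control needed to pass the strong Feller property to $J_\alpha$ on $L^2(E;m)\cap L^\infty(E;m)$.

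For hypothesis (iii), I would take $\mathcal C=C_c^\infty(\BR^d)$, which is uniformly dense in $C_c(\BR^d)$. One must check that every $u\in C_c^\infty(\BR^d)$ lies in $D(A)$ with $Au\in L^\infty(\BR^d)$. The argument decomposes $Au$ into a far-field part, controlled by the boundedness of $u$ together with condition (C) which guarantees finiteness of the L\'evy-type measure away from the diagonal, and a near-field part, handled via a second-order Taylor expansion: the first-order term vanishes by symmetry of $a$, and the remaining term is controlled via condition (B), since $\int_{|z|\le r}|z|^2/(|z|^d\varphi(|z|))\,dz \sim \int_0^r s/\varphi(s)\,ds\le c_3 r^2/\varphi(r)$. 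The main obstacle lies here: one must carefully estimate the principal-value integral defining $Au$ pointwise and verify that this pointwise definition coincides with the action of the $L^2$-generator on $C_c^\infty(\BR^d)$.

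With (i)--(iii) in hand, Proposition \ref{prop.xeg} produces the Green function for $-A$. Alternatively, one may bypass Proposition \ref{prop.xeg} and define $G(x,y):=\int_0^\infty p_t(x,y)\,dt$ directly: transience of $(T_t)$, which is standing in this paper, ensures finiteness for $x\ne y$; Tonelli gives the integral representation $Rf(x)=\int_E G(x,y)f(y)\,m(dy)$ for $f\in\BB^+(E)$; and excessiveness of $G(\cdot,y)$ and (by symmetry) $G(x,\cdot)$ follows from the semigroup identity $T_sG(\cdot,y)(x)=\int_s^\infty p_t(x,y)\,dt\nearrow G(x,y)$ as $s\searrow 0$.
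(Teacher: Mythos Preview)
Your alternative approach at the end is essentially what the paper does: the paper's entire proof is the sentence ``By \cite[Theorem 1.2]{CK}, we get the following result,'' i.e.\ Chen--Kumagai provide a jointly continuous transition density $p_t(x,y)$, and then the Green function exists either by the absolute continuity criterion stated at the beginning of Section~\ref{sec3} (citing \cite[Theorem 4.2.4]{FOT}) or by the direct formula $G(x,y)=\int_0^\infty p_t(x,y)\,dt$ as you outline. That part of your proposal is correct and matches the paper.

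Your primary route via Proposition~\ref{prop.xeg}, however, carries a real obstacle at hypothesis~(iii) that you flag but understate. The symmetry $a(x,y)=a(y,x)$ does \emph{not} imply that $z\mapsto a(x,x+z)$ is even in $z$, so in the principal-value integral the first-order Taylor term $\nabla u(x)\cdot z\,\frac{a(x,x+z)}{|z|^d\varphi(|z|)}$ need not integrate to zero over annuli, and the limit as $r\searrow 0$ may fail to exist pointwise. For merely bounded measurable $a$ satisfying (A) one does not in general have $C_c^\infty(\BR^d)\subset D(A)$ with $Au$ bounded; extra regularity of $a$ (e.g.\ H\"older continuity, or evenness of $a(x,x+\cdot)$) is typically needed for that. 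So this route would require either additional hypotheses or a different choice of core $\mathcal C$. The paper avoids the issue entirely by going straight through the heat kernel, and so should you.
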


{\bf Part of $A$ on an open set $D\subset E$.} For a given self-adjoint Dirichlet operator $A$ and an open set $D\subset E$, we may define self-adjoint Dirichlet operator $A_{|D}$ as follows: let
\[
\EE_{|D}(u,v):=\EE(u,v),\quad u,v\in D(\EE_{|D}):=\{w\in D(\EE): \tilde w=0\,\,\text{q.e. on }\, E\setminus D\}.
\]
By \cite[Theorem 4.4.3]{FOT}, $\EE_{|D}$ is a Dirichlet form, and if $\EE$ is regular, then $\EE_{|D}$ is regular  too. Therefore, by \cite[Theorem 1.3.1, Theorem 1.4.1]{FOT},
there exists a unique self-adjoint Dirichlet operator $(B,D(B))$ on $L^2(D;m)$ ($\subset L^2(E;m)$) such that $D(B)\subset D(\EE_{|D})$,
and 
\[
(-Bu,v)_{L^2(D;m)}=\EE_{|D}(u,v),\quad u\in D(B),\, v\in D(\EE_{|D}).
\]
We let $A_{|D}:= B$. The operator $A_{|D}$ is called a {\em part of $A$ on $D$} (or restriction of $A$ to $D$).
This operation on a Dirichlet operator $A$ is often used  when approaching the Dirichlet problem on $D$
for $A$.

{\bf Perturbation of $A$ by a smooth measure.} Let $\nu$ be a positive smooth measure on $E$, and  $A$ be a  regular self-adjoint Dirichlet operator on $L^2(E;m)$.
Define
\[
\EE_\nu(u,v):=\EE(u,v)+\int_E\tilde u\,\tilde v\,d\nu,\quad u,v\in D(\EE_\nu):=\{w\in D(\EE): \tilde w\in L^2(E;\nu)\}.
\]
By \cite[Theorem IV.4.4]{MR}, $\EE_\nu$ is a symmetric Dirichlet form on $L^2(E;m)$. Therefore, there exists a unique 
self-adjoint operator $B$ on $L^2(E;\nu)$ such that $D(B)\subset D(\EE_\nu)$, and
\[
(-Bu,v)_{L^2(E;m)}=\EE_\nu(u,v),\quad u\in D(B),\, v\in D(\EE_\nu).
\]
We set $A_\nu:= B$. Formally, $-A_\nu=-A+\nu$, so  $-A_\nu$ may be called a perturbation of $-A$ be the measure $\nu$.

{\bf Resurrected (regional)  operator.}  Let $A$ be a regular self-adjoint Dirichlet operator on $L^2(E;m)$ with
regular symmetric Dirichlet form $(\EE,D(\EE))$. By the  Beurling-Deny formulae (see \cite[Theorem 3.2.1]{FOT}) for any $u,v\in C_c(E)\cap D(\EE)$,
\begin{equation}
\label{B-D.eq}
\EE(u,v)=\EE^{(c)}(u,v)+\int_{E\times E\setminus diag} (u(x)-u(y))(v(x)-v(y))\,J(dx,dy)+\int_E u\,v\, d\kappa,
\end{equation}
where  $\mathcal{E}^{(c)}$ is a symmetric form, with domain $D(\EE^{(c)})= D(\EE)\cap C_{c}(E)$, which  satisfies the strong local property:
\[
\EE^{(c)}(u, v)=0 \quad \text { for } u,v \in D(\EE^{(c)})
\]
such that $v$ is constant on a neighbourhood of $\text{supp}[u]$.
$J$ is a symmetric positive Radon measure on $E \times E\setminus diag$ and $\kappa$ is a positive Radon measure on $E$.
Such $\mathcal{E}^{(c)}, J, \kappa$ are uniquely determined by $(\mathcal{E},D(\EE))$. 
Let $\EE^{res}(u,v):=\EE(u,v)-\int_Euv\,d\kappa$. By \cite[Theorem 5.2.17]{CF}, $(\EE^{res}, D(\EE^{res}))$ is a regular symmetric Dirichlet form
on $L^2(E;m)$ with $D(\EE^{res})$ described in \cite[(5.2.25)]{CF}. Therefore, there exists a unique self-adjoint Dirichlet operator $(B,D(B))$
such that $D(B)\subset D(\EE^{res})$ and
\[
(-Bu,v)=\EE^{res}(u,v),\quad u\in D(B),\, v\in D(\EE^{res}).
\] 
We let $A^{res}:=B$. Operator $A^{res}$ is very useful for interpretation of 
the Neumann problem  on $D$ for purely jumping Dirichlet operators $A$ (see e.g. \cite{Abatangelo}).
Indeed, one takes $(A_{|D})^{res}$ (for the fractional Laplacian, we then derive so called regional fractional Laplacian).

\begin{proposition}
Let $(A, D(A))$ be a regular self-adjoint Dirichlet operator on $L^2(E;m)$. 
Suppose that there exists the Green function $G$ for $-A$.
\begin{enumerate}[(1)]
\item If $D$ is an open subset of $E$, then there exists the Green function for $-A_{|D}$.
\item If $\nu$ is a strictly smooth positive measure on $E$, then there exists the Green function for $-A_\nu$.
\item If the killing measure $\kappa$ from decomposition \eqref{B-D.eq} is strictly smooth, then there exists the Green function
for $-A^{res}$.

\end{enumerate}
\end{proposition}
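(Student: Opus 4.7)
The strategy is to apply the characterization stated just before the proposition: the Green function for $-A$ exists if and only if $R_\alpha f(x)=0$ for every $x\in E$, every $\alpha>0$, and every $f\in\BB^+(E)$ with $\int_E f\,dm=0$. Equivalently, the transition kernel $P_t(x,\cdot)$ of the Hunt process $\mathbb X$ associated with $A$ is absolutely continuous with respect to $m$ for every $x\in E$ and $t>0$. Since $(\EE,D(\EE))$ is regular, the three modified forms $\EE_{|D}$, $\EE_\nu$, $\EE^{res}$ are regular Dirichlet forms as well, so each of the operators $-A_{|D}$, $-A_\nu$, $-A^{res}$ admits its own Hunt process; the plan is to identify this process in terms of $\mathbb X$ and transfer the absolute continuity from $\mathbb X$.

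The first two cases are handled by pointwise domination. The Hunt process associated with $A_{|D}$ is the part of $\mathbb X$ in $D$, that is, $\mathbb X$ killed at the first exit time $\tau_D$, so for any $f\in\BB^+(E)$ with $\int_E f\,dm=0$, extended by zero outside $D$,
\[
R^D_\alpha f(x)=\mathbb E_x\int_0^{\tau_D}e^{-\alpha t}f(X_t)\,dt\le R_\alpha f(x)=0,\quad x\in D,
\]
which is the absolute continuity condition for $A_{|D}$. For (2), strict smoothness of $\nu$ ensures that the PCAF $A^\nu$ has empty exceptional set, so the Hunt process associated with $A_\nu$ is $\mathbb X$ subjected to the Feynman--Kac killing by $A^\nu$, and
\[
R^\nu_\alpha f(x)=\mathbb E_x\int_0^\infty e^{-\alpha t-A^\nu_t}f(X_t)\,dt\le R_\alpha f(x)=0,\quad x\in E.
\]

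Case (3) is more delicate because the resurrected process lives longer than $\mathbb X$, so no pointwise domination is available. Instead, one relies on the excursion description of $\mathbb X^{res}$: strict smoothness of $\kappa$ makes the PCAF $A^\kappa$ well-defined at every $x\in E$, and $\mathbb X^{res}$ is constructed by piecing together successive independent copies of $\mathbb X$, each started at the ``death by $\kappa$'' location of the previous copy. Writing $0=\tau_0<\tau_1<\tau_2<\dots$ for the successive resurrection times, the decomposition
\[
P^{res}_t(x,B)=\sum_{n\ge 0}\mathbb E^{res}_x\big[\mathbf 1_B(X^{res}_t);\,\tau_n\le t<\tau_{n+1}\big]
\]
together with the strong Markov property at each $\tau_n$ expresses every summand as an iterated integral of the transition kernel $P_{\cdot}(\cdot,\cdot)$ of the original process $\mathbb X$. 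Since the latter is absolutely continuous with respect to $m$ by hypothesis, so is $P^{res}_t(x,\cdot)$ for every $x\in E$, yielding the Green function for $-A^{res}$.

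The main obstacle is a rigorous justification of the excursion decomposition of $\mathbb X^{res}$ starting from \emph{every} $x\in E$. This is where strict smoothness of $\kappa$ is essential: it guarantees that the PCAF $A^\kappa$, the associated killing time of $\mathbb X$, and the resurrection point $X_{\tau_n-}$ are defined for all $x\in E$, not merely quasi-everywhere, and it allows us to use \cite[Theorem 5.2.17]{CF} to identify the Hunt process of $\EE^{res}$ with the classical resurrection of $\mathbb X$. Once this identification and the pointwise Markov property at each $\tau_n$ are in place, the transfer of absolute continuity is automatic.
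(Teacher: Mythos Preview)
Your argument is correct. For (1) and (2) you spell out the domination $R^D_\alpha f\le R_\alpha f$ and $R^\nu_\alpha f\le R_\alpha f$ that underlies the results the paper simply cites from \cite{FOT}, so there is no substantive difference there.

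For (3) your route genuinely differs from the paper's. You decompose the resurrected transition kernel over the successive concatenation epochs $\tau_n$ and reduce each summand, via the strong Markov property at $\tau_n$, to an integral of $P_{\cdot}(\cdot,B)$; this works once the Ikeda--Nagasawa--Watanabe piecing-out is in hand for every starting point, which is exactly what strict smoothness of $\kappa$ together with \cite[Theorem 5.2.17]{CF} provides. The paper instead bypasses the iteration entirely: it realises the law $(Q_x)$ of the resurrected process as the supermartingale multiplicative-functional transform of $(P_x)$ with $m_t=\mathbf 1_{\{t<\zeta\}}e^{A^\kappa_t}$ (invoking \cite[Theorem 62.19]{Sharpe}), so that
\[
Q_x(F;\,T<\zeta)=P_x(F\,m_T),\qquad F\in b\FF_T,
\]
and then identifies $(Q_x)$ with the Hunt process of $\EE^{res}$ via \cite[Theorem 5.2.17]{CF}. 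Absolute continuity of $P^{res}_t(x,\cdot)$ with respect to $m$ is then a one-line consequence: if $m(B)=0$ then $P_x(X_t\in B)=0$, hence $Q_x(X_t\in B;\,t<\zeta)=P_x\big(\mathbf 1_B(X_t)\,e^{A^\kappa_t};\,t<\zeta\big)=0$. Your approach is more hands-on and makes the probabilistic structure of the resurrected process explicit; the paper's is shorter and sidesteps all bookkeeping of the resurrection times.
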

\begin{proof}
(1) It follows from \cite[Theorem 4.4.2]{FOT}. For (2), see \cite[Theorem A.2.12]{FOT}. Ad (3).
Let $\phi(x)=1,\, x\in E$. Clearly, $\phi(X_t)=\mathbf1_{\{t<\zeta\}}$. By the comment preceding Lemma 5.3.3 in \cite{FOT},
\[
\phi(X_t)-\phi(X_0)=-A^{\kappa}_t+M_t,\quad t\ge 0
\]
is the Doob-Meyer decomposition of supermartingale $\phi(X)$, where $A^{\kappa}$ is a PCAF associated with
killing measure $\kappa$ appearing in the Beurling-Deny formulae \eqref{B-D.eq}, and $M$ is a martingale additive functional of $\mathbb X$.
By \cite[Theorem 62.19]{Sharpe} a family of measures $(Q_x)$ given by
\begin{equation}
\label{eq.qq}
Q_x(F\mathbf1_{\{T<\zeta\}}):=P_x(Fm_T),\quad F\in b\FF_T, \,x\in E,\, T\ge 0,
\end{equation}
constitutes a Right Markov process on $E$, where 
\[
m_t:=\frac{\phi(X_t)}{\phi(X_0)}e^{A^{\kappa}_t}=\phi(X_t)e^{A^{\kappa}_t}=\mathbf1_{\{t<\zeta\}}e^{A^{\kappa}_t},\quad t\ge 0.
\]
By \cite[Theorem 5.2.17]{CF}, $((Q_x), X)$ is a Hunt process associated with $\EE^{res}$. By \eqref{eq.qq}, $Q_x$ is equivalent to $P_x$
for any $x\in E$. Thus, there exists the Green function for $-A^{res}$.
\end{proof}

\section{Method of sub- and supersolutions}
\label{sec4}

In what follows $f:E\times\BR\to \BR$. Consider the following conditions.
 
\begin{enumerate}
\item[Car)] $f$ is a Carath\'eodory function, i.e.
\begin{itemize}
\item $x\mapsto f(x,y)$ is Borel measurable for any $y\in\mathbb R$,
\item $y\mapsto f(x,y)$ is continuous for $m$-a.e. $x\in E$;
\end{itemize}  
\item[Sig)] for any $u\in \BR$,  $f(x,u)u\le 0\,\, m\mbox{-a.e.}\,\, x\in E$;
\item[Int)]  for any $\underline u,\overline u\in L^1(E;\varrho\cdot m)$ such that $f(\cdot,\underline u), f(\cdot,\overline u)\in L^1(E;\rho\cdot m)$ we have
\[
x\longmapsto \sup_{y\in [\underline u(x),\overline u(x)]}|f(x,y)|\in L^1(E;\rho\cdot m);
\]
\item[qM)] for any $M>0$ the mapping $E\ni x\longmapsto \sup_{|y|\le M} |f(x,y)|$ is locally quasi-integrable.
\item[M)] for any $M>0$ the mapping $E\ni x\longmapsto \sup_{|y|\le M} |f(x,y)|\in L^1(E;\rho\cdot m)$.

\end{enumerate}

For any $\mu\in\MM_\rho$, we let $\mu_d$ denote the part of $\mu$, which is  absolutely continuous with respect to $Cap$,
and by $\mu_c$ we denote the part of $\mu$, which is orthogonal to $Cap$. Observe that $\mu_d$ is a smooth measure. Indeed, 
by the very definition $\mu_d\ll Cap$. Now,
let $g\in L^2(E;m)\cap\BB(E)$ be strictly positive. We have 
\[
\int_E R_1(\rho\wedge g)\,d|\mu_d|\le \int_E\rho\,d|\mu|<\infty.
\]
We used here the fact that $\rho$ is excessive. Clearly, $\eta:=R_1(\rho\wedge g)$ is strictly positive.
By \cite[Theorem 4.2.3]{FOT}, $\eta$ is quasi-continuous. Therefore, by Proposition \ref{prop2.1}, $\mu_d$ is smooth.

\begin{remark}
\label{rem3.1}
Throughout the paper, we frequently use  without special mentioning the following facts.

(a) If $\mu\in\MM_\rho$, then
 $R|\mu|<\infty$ q.e.  and, as a result, $R|\mu|$ is quasi-continuous. The last assertion  follows from  \cite[Theorem 3.1]{K:CVPDE}.
At the same time,
\[
\int_ER|\mu|\varrho\,d m= \int_E R\varrho\,d|\mu|\le \int_E\rho\,d|\mu|<\infty.
\] 
Thus, since $\varrho$ is strictly positive, we obtain that  $R|\mu|<\infty$ $m$-a.e. By \cite[Theorem A.2.13(v)]{CF}, we have that, in fact,  $R|\mu|<\infty$ q.e. (see \cite[Lemma 2.1.4]{FOT}).

(b) If $u_1, u_2$ are quasi-continuous functions on $E$, then $u_1\le u_2\,\,m$-a.e. if and only if $u_1\le u_2$ q.e.

(c) Let $B\in\BB(E)$.  If $Cap(B)=0$, then $P_x(\exists_{t\ge 0}: X_t\in B)=0$ q.e. (see \cite[Theorem 4.2.1]{FOT}).
\end{remark}

Recall that a measurable function $\tau:\Omega\to [0,\infty]$ is called a {\em stopping time} if $\{\omega\in\Omega: \tau(\omega)\le t\}\in\FF_t$
for any $t\ge 0$.
We say that a non-decreasing sequence $\{\tau_k\}$ of {\em stopping times} is a {\em reducing sequence} for a measurable function $u$
on $E$ if $\tau_k\wedge\zeta\to \zeta$ q.a.s., and 
\[
\mathbb E_x\sup_{t\le\tau_k}|u(X_t)|<\infty,\quad k\ge 0\,\,\, \mbox{q.e.}
\] 

\begin{lemma}
\label{lm.m}
Let $\mu\in\MM_\rho$. Set $w:=R|\mu|$, $u(x):= R\mu(x),\, x\in E\setminus N$ and zero on $N$, where $N=\{x\in E: w(x)=\infty\}$.
Then, there exists a local MAF $M$ such that, for any $x\in E\setminus N$,
\begin{equation}
\label{eq.lmm1}
u(X_t)=u(X_0)-\int_0^t\,dA^{\mu_d}_r+\int_0^t\,dM_r,\quad t\ge 0\,\,\, P_x\mbox{-a.s.}
\end{equation}
Moreover, for $\tau_k:=\inf\{t>0: w(X_t)\ge k\}\wedge k$, we have
\begin{equation}
\label{eq.lmm2}
\mathbb E_x\sup_{t\le \tau_k}|u(X_t)|+\mathbb E_x\sup_{t\le\tau_k}|M_t|<\infty,\quad k\ge 0,\, x\in E\setminus N.
\end{equation}
In particular, $\{\tau_k\}$ is a reducing sequence for $u$.
\end{lemma}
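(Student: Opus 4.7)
The plan is to split $\mu$ into four orthogonal pieces $\mu=\mu_d^+-\mu_d^-+\mu_c^+-\mu_c^-$ (where $\mu_d\ll Cap$, $\mu_c\perp Cap$ and $\pm$ denote the Hahn components), treat each piece by an appropriate tool, and assemble the decomposition by linearity. First I would observe that, since $R|\mu|<\infty$ q.e.\ by Remark \ref{rem3.1}(a), the set $N=\{w=\infty\}$ satisfies $Cap(N)=0$, and by Remark \ref{rem3.1}(c) the Hunt process started from $E\setminus N$ avoids $N$ q.s., so $u(X_t)$ is well-defined and finite q.s.

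For the key identity \eqref{eq.lmm1}, the smooth pieces $\mu_d^{\pm}$ are handled by the classical Fukushima--Revuz formula: for every positive smooth $\nu$ in $\MM_\rho$, combining \cite[Theorem 5.1.4]{FOT} with the construction of the associated martingale additive functional (cf.\ \cite{K:NoDEA}) yields
\[
R\nu(X_t)-R\nu(X_0)=-A^\nu_t+M^\nu_t,\quad t\ge 0,\,\, P_x\text{-a.s.\ for q.e.\ }x,
\]
with $M^\nu$ a local MAF. Applied to $\nu=\mu_d^{\pm}$ this produces the $A^{\mu_d}$ term. For the capacity-singular pieces $\nu=\mu_c^\pm$, concentrated on a Borel set $B$ with $Cap(B)=0$, I would apply Doob--Meyer to the non-negative càdlàg supermartingale $R\nu(X_t)$ under $P_x$: its predictable compensator is a PCAF whose Revuz measure $\lambda$ is smooth and satisfies $R\lambda=R\nu$. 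Because the process avoids $B$ q.s.\ by Remark \ref{rem3.1}(c), and because the Green potential is injective on smooth measures, $\lambda$ is forced to vanish, so $R\nu(X)$ is itself a local MAF. Summing the four decompositions and taking $M$ to be the sum of the four martingale parts gives \eqref{eq.lmm1}.

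For \eqref{eq.lmm2} I would exploit that $w=R|\mu|$ is excessive, so $w(X)$ is a non-negative càdlàg supermartingale under $P_x$ for $x\in E\setminus N$. By right-continuity and the very definition of $\tau_k$ one has $w(X_t)<k$ on $\{t<\tau_k\}$, while optional stopping at the bounded time $\tau_k$ gives $\mathbb E_x w(X_{\tau_k})\le w(x)$; hence $\mathbb E_x\sup_{t\le\tau_k}w(X_t)\le k+w(x)$. Since $|u|\le w$ off $N$, this yields the bound on $\mathbb E_x\sup_{t\le\tau_k}|u(X_t)|$. For $M$, the representation $M_t=u(X_t)-u(X_0)+A^{\mu_d}_t$ together with the Revuz estimate $\mathbb E_x A^{|\mu_d|}_{\tau_k}\le R|\mu_d|(x)\le w(x)$ finishes the job. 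Finally, Doob's maximal inequality $P_x(\sup_{t\ge 0}w(X_t)\ge k)\le w(x)/k$ forces $\sup_t w(X_t)<\infty$ $P_x$-a.s., whence $\tau_k\to\infty$ q.s.\ and in particular $\tau_k\wedge\zeta\to\zeta$ q.a.s., confirming that $\{\tau_k\}$ is a reducing sequence for $u$.

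The main obstacle I anticipate is the identification of the Doob--Meyer compensator of $R\mu_c^{\pm}(X)$ as the zero PCAF. This step combines three non-trivial ingredients: (i) the fact that, for a Hunt process, the predictable compensator of a supermartingale of the form $f(X)$ is a PCAF; (ii) the Revuz correspondence, which is a bijection only between PCAFs and \emph{smooth} measures; and (iii) the injectivity of the Green potential on smooth measures, which is what turns ``the would-be Revuz measure $\lambda$ has potential $R\nu$ but cannot live on the polar set supporting $\nu=\mu_c^{\pm}$'' into the conclusion $\lambda=0$. Every other step of the proof is a direct application of optional stopping and Doob's maximal inequality once this decomposition is available.
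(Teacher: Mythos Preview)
The paper disposes of \eqref{eq.lmm1} by citing \cite[Theorem 3.7]{K:CVPDE} and of \eqref{eq.lmm2} by combining the supermartingale property of $w(X)$ (\cite[Theorem II.2.12]{BG}) with \cite[Theorem 51.1]{Sharpe}; you are essentially attempting to reconstruct the first citation from scratch. Your treatment of the smooth parts $\mu_d^\pm$ via the Revuz--Fukushima machinery, and your hands-on derivation of \eqref{eq.lmm2} and of the reducing property, are correct and close in spirit to what the cited references do.

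The genuine gap is in your handling of the singular pieces $\nu=\mu_c^\pm$. You assert that the Doob--Meyer compensator $A$ of the positive supermartingale $R\nu(X)$ is a PCAF with Revuz measure $\lambda$ satisfying $R\lambda=R\nu$, and then that injectivity on smooth measures forces $\lambda=0$. This chain cannot be right. The identity $R\lambda=R\nu$ would require $R\nu(X)$ to be of class (D); and even if it held, injectivity of $R$ on \emph{all} of $\MM_\rho$ (not just smooth measures) would give $\lambda=\nu$, hence $\nu=0$ since $\lambda$ is smooth and $\nu$ sits on a polar set --- a contradiction for nontrivial $\mu_c$. The Newtonian example $\nu=\delta_0$ in $\BR^3$ already shows what actually happens: $R\nu(X)=c|X|^{-1}$ is a positive local martingale (so $A\equiv 0$ and $\lambda=0$) yet a strict supermartingale, and $R\lambda=0\neq R\nu$.

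The conclusion you want --- that $R\nu(X)$ is a local MAF when $\nu\perp Cap$ --- is true, but the correct reason is that $R\nu$ is finely harmonic off the polar support of $\nu$ and the Hunt process started outside that set never enters it (Remark \ref{rem3.1}(c)); hence $R\nu(X)$ is a local martingale directly, with zero compensator, and no appeal to ``$R\lambda=R\nu$'' is needed or valid. This is precisely the content packaged in \cite[Theorem 3.7]{K:CVPDE}, and it is the obstacle you correctly flagged at the end, but your proposed resolution of it does not work as stated.
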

\begin{proof}
By \cite[Theorem 3.7]{K:CVPDE}, we obtain \eqref{eq.lmm1}. From the fact that $w(X)$ is a positive supermartingale under the measure
$P_x$, for any $x\in E\setminus N$ (see e.g. \cite[Theorem II.2.12]{BG}) and  from \cite[Theorem 51.1]{Sharpe}, we infer \eqref{eq.lmm2}.
The last assertion is obvious.
\end{proof}

\begin{definition}
Let $\mu\in \MM_\rho$.
We say that a measurable function $u$ is a solution to (\ref{eq1.1}) if $f(\cdot,u)\in L^1(E;\rho\cdot m)$
and 
\begin{equation}
\label{eq3.00cb1}
u(x)=\int_E G(x,y) f(y,u(y)) \,m(dy)+\int_E G(x,y)\,\mu(dy),\quad m\mbox{-a.e.}
\end{equation}
\end{definition}

\begin{remark}
\label{rem3.1addfc}
By Remark \ref{rem3.1}, $R|f(\cdot,u)|<\infty$ q.e., $R|\mu|<\infty$ q.e., and  $Rf(\cdot,u)$, $R\mu$
are quasi-continuous. As a result, one sees that a solution to (\ref{eq1.1}) has always a quasi-continuous $m$-version. Observe also that
by (\ref{eq2.1}), $u\in L^1(E;\varrho\cdot m)$.
\end{remark}

\begin{proposition}
Assume that $\mu\in\MM_1$, and $R_1(\BB_b(E))\subset C_b(E)$ or  $u \in L^1(E;m)$, $R_1(C_b(E))\subset C_b(E)$. Then $u$ is a solution to \eqref{eq1.1} if and only if $u$ is a renormalized solution to \eqref{eq1.1}:
\begin{enumerate}[(i)]
\item $T_k(u):=\min\{k,\max\{u,-k\}\}\in D_e(\EE),\, k>0$;
\item $f(\cdot,u)\in L^1(E;m)$;
\item There exists a family $(\nu_k)_{k\ge 0}\subset \MM_1$ of smooth measures such that
$\nu_k\to \mu_c$ in the narrow topology, as $k\to\infty$;
\item For any bounded $\eta\in D(\EE)$,
\[
\EE(T_k(u),\eta)=\langle f(\cdot,u),\eta\rangle+\langle \mu_d,\tilde \eta\rangle+\langle\nu_k,\tilde\eta\rangle.
\]
\end{enumerate}
\end{proposition}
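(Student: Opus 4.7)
The result is essentially a translation of \cite[Proposition 4.3, Theorem 4.9]{K:NoDEA} to the present framework; I sketch the adaptations.

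For the forward implication (solution implies renormalized solution), apply Lemma \ref{lm.m} to the signed measure $\nu := f(\cdot,u)\cdot m + \mu\in\MM_\rho$ to obtain
\begin{equation*}
u(X_t) - u(X_0) = -\int_0^t f(X_r,u(X_r))\,dr - A^{\mu_d}_t + M_t
\end{equation*}
for a local MAF $M$. The It\^o--Tanaka formula for $T_k$ applied to this c\'adl\'ag semimartingale then produces
\begin{equation*}
T_k(u(X_t)) - T_k(u(X_0)) = -\int_0^t \mathbf{1}_{\{|u|<k\}}(X_r)\bigl(f(X_r,u(X_r))\,dr + dA^{\mu_d}_r\bigr) + \widetilde M^k_t - L^k_t,
\end{equation*}
with $\widetilde M^k$ a MAF and $L^k$ a PCAF; let $\nu_k$ denote the smooth Revuz measure of $L^k$. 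Boundedness of $T_k(u)$ and finite energy of $\widetilde M^k$ force $T_k(u)\in D_e(\EE)$, giving (i); (ii) follows from the decomposition together with $\mu\in\MM_1$ as in \cite[Proposition 4.3]{K:NoDEA}; polarizing the decomposition against bounded $\eta\in D(\EE)$ yields (iv); and narrow convergence $\nu_k\to\mu_c$ in (iii) reflects that $L^k$ captures the excursions of $u(X)$ past the levels $\pm k$.

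For the reverse implication, insert $\eta := R_\alpha\phi$ into (iv) with $\phi\in\BB_b(E)$ (respectively $\phi\in C_b(E)$ under the second alternative); the resolvent hypothesis ensures $\eta=\tilde\eta\in C_b(E)\cap D(\EE)$, so pairings against smooth measures are unambiguous. Combining (iv) with the identity $\EE(v,R_\alpha\phi) = (v,\phi) - \alpha(v,R_\alpha\phi)$ for $v\in D(\EE)$ gives
\begin{equation*}
(T_k(u),\phi) - \alpha(T_k(u),R_\alpha\phi) = \langle f(\cdot,u),R_\alpha\phi\rangle + \langle \mu_d,R_\alpha\phi\rangle + \langle \nu_k,R_\alpha\phi\rangle.
\end{equation*}
Sending $k\to\infty$ (using (iii) together with $R_\alpha\phi\in C_b(E)$) and then $\alpha\searrow 0$, the left-hand side tends to $(u,\phi)$ while the right-hand side tends to $\langle Rf(\cdot,u)+R\mu,\phi\rangle$; varying $\phi$ over a separating subclass produces \eqref{eq1.2}. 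Under the weaker resolvent hypothesis $R_1(C_b(E))\subset C_b(E)$, the additional assumption $u\in L^1(E;m)$ justifies the passage $\alpha\searrow 0$ via dominated convergence when only continuous bounded test functions are available.

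The principal obstacle is property (iii): identifying $L^k$ as a local-time-type PCAF for $u(X)$ at the levels $\pm k$ and verifying that its Revuz measure $\nu_k$ converges narrowly to $\mu_c$ as $k\uparrow\infty$. The required probabilistic ingredients (Lemma \ref{lm.m}, the Revuz correspondence, transience of $(T_t)$, and the quasi-continuous $m$-version of elements of $D_e(\EE)$) are all in place in the present framework, so the arguments of \cite[Proposition 4.3]{K:NoDEA} transfer with only notational adjustments.
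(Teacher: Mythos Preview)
Your proposal is correct and takes essentially the same approach as the paper: the paper's proof consists solely of the citation ``It follows from \cite[Theorem 4.9]{K:NoDEA}'', which is precisely the reference you invoke. You have simply unpacked a sketch of that cited argument, so there is nothing to compare.
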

\begin{proof}
It follows from \cite[Theorem 4.9]{K:NoDEA}.
\end{proof}

\begin{definition}
\label{def.subsup}
Let $\mu\in \MM_\rho$.
We say that a measurable function $u$ is a subsolution (resp. supersolution) to (\ref{eq1.1}) if $f(\cdot,u)\in L^1(E;\rho\cdot m)$
and  there exists a positive measure $\nu\in \MM_\rho$ such that
\begin{equation}
\label{eq3.00cb2}
u(x)=\int_E G(x,y) f(y,u(y)) \,m(dy)+\int_E G(x,y)\,\mu(dy)-\int_E G(x,y)\,\nu(dy),\quad m\mbox{-a.e.}
\end{equation}
(resp.)
\begin{equation}
\label{eq3.00cb3}
u(x)=\int_E G(x,y) f(y,u(y)) \,m(dy)+\int_E G(x,y)\,\mu(dy)+\int_E G(x,y)\,\nu(dy),\quad m\mbox{-a.e.}
\end{equation}
\end{definition}

Throughout the paper, unless stated otherwise,  we always consider quasi-continuous $m$-versions of solutions, supersolutions and subsolutions to (\ref{eq1.1}). These versions may be defined as right-hand sides of \eqref{eq3.00cb1}, \eqref{eq3.00cb2} or \eqref{eq3.00cb3}, where finite,  and zero otherwise.

\begin{proposition}
\label{prop3.1}
Let $u,w$ be  subsolutions to (\ref{eq1.1}). Then $u\vee w$ is a subsolution to (\ref{eq1.1}). 
\end{proposition}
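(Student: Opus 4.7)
Set $v := u \vee w$. The plan is to exhibit a positive $\nu \in \MM_\rho$ such that $v$ satisfies the subsolution representation (\ref{eq3.00cb2}); the strategy is to apply a Meyer--Tanaka formula to the semimartingale decompositions of $u(X_\cdot)$ and $w(X_\cdot)$ supplied by Lemma \ref{lm.m}. First, for integrability: since $u, w$ are subsolutions, they lie in $L^1(E;\varrho\cdot m)$ (arguing as in Remark \ref{rem3.1addfc}), hence $v \in L^1(E;\varrho\cdot m)$, and the pointwise identity $f(\cdot,v) = f(\cdot,u)\mathbf1_{\{u\ge w\}} + f(\cdot,w)\mathbf1_{\{u<w\}}$ gives $|f(\cdot,v)| \le |f(\cdot,u)| + |f(\cdot,w)| \in L^1(E;\rho\cdot m)$.

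Viewing $u$ and $w$ as potentials of the signed $\MM_\rho$-measures $\lambda_u := f(\cdot,u)\cdot m + \mu - \nu_u$ and $\lambda_w := f(\cdot,w)\cdot m + \mu - \nu_w$, Lemma \ref{lm.m} produces, for q.e. $x$ and under $P_x$, semimartingale decompositions of $u(X_\cdot)$ and $w(X_\cdot)$ along a common reducing sequence $\{\tau_k\}$: the diffuse drift of $u(X_\cdot)$ is $-\int_0^t f(X_r,u)\,dr - A^{\mu_d}_t + A^{\nu_{u,d}}_t$, and similarly for $w$ with $\nu_{u,d}$ replaced by $\nu_{w,d}$, plus purely discontinuous compensators arising from $\mu_c - \nu_{u,c}$ (resp. $\mu_c - \nu_{w,c}$) and a local MAF. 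Applying the Meyer--Tanaka formula to $Z_t := w(X_t) - u(X_t)$ yields $Z_t^+ = Z_0^+ + \int_0^t \mathbf1_{\{Z_{r-}>0\}}\,dZ_r + L_t$ with $L_t$ increasing, so $v(X_t) = u(X_t) + Z_t^+$ inherits a decomposition whose diffuse drift collapses to $-\int_0^t f(X_r,v(X_r))\,dr - A^{\mu_d}_t$, while the positive additive-functional terms aggregate into $A^{\tilde\nu_d}_t + L_t$, where $\tilde\nu_d := \mathbf1_{\{u\ge w\}}\cdot\nu_{u,d} + \mathbf1_{\{u<w\}}\cdot\nu_{w,d}$.

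By the Revuz correspondence (Theorem 5.1.4 in \cite{FOT}), $L$ is in bijection with a positive smooth measure $\lambda$; assembling similarly a concentrated part $\nu_c$ from the compensators of jumps of $v(X_\cdot)$ not accounted for by $\mu_c$, one obtains a positive Borel measure $\nu := \tilde\nu_d + \lambda + \nu_c$ with $\|\nu\|_\rho \le \|\nu_u\|_\rho + \|\nu_w\|_\rho + \|\lambda\|_\rho < \infty$, where the last summand is bounded via a domination estimate on $L$. Taking $\mathbb E_x$ in the decomposition of $v(X_\cdot)$, letting the reducing sequence exhaust $\zeta$, and using $\rho$-weighted dominated convergence yields $v = Rf(\cdot,v) + R\mu - R\nu$ $m$-a.e., establishing the subsolution property.

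The main technical obstacle is the rigorous deployment of the Meyer--Tanaka formula in this general Hunt-process / Dirichlet-form setting, and the clean identification of $L$ (together with the purely discontinuous contributions arising at jump times of $X$ that straddle $\{u=w\}$) with a positive Borel measure on $E$ via Revuz duality. Once $L$ is so identified, the algebraic matching of drifts on $\{u \ge w\}$ versus $\{u < w\}$ and the $\rho$-integrability bound are straightforward bookkeeping.
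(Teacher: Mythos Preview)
Your overall strategy matches the paper's, but there is a genuine gap in how you handle the concentrated parts $\mu_c$, $\nu_{u,c}$, $\nu_{w,c}$. You write that Lemma \ref{lm.m} produces ``purely discontinuous compensators arising from $\mu_c - \nu_{u,c}$'', but this is a misreading: the decomposition in Lemma \ref{lm.m} is $u(X_t) = u(X_0) - A^{(\lambda_u)_d}_t + M_t$, involving \emph{only} the diffuse part $(\lambda_u)_d$; the concentrated part $\mu_c - \nu_{u,c}$ does not appear as any compensator or drift term in the semimartingale representation. Consequently your proposal to ``assemble a concentrated part $\nu_c$ from the compensators of jumps of $v(X_\cdot)$'' has no content---there are no such compensators to assemble.

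The way the paper recovers the concentrated part is the key idea you are missing. After taking expectations along the reducing sequence $\{\tau_k\}$ one must identify $\lim_k \mathbb E_x(u\vee w)(X_{\tau_k})$. The paper does this via the algebraic identity $u\vee w = \tfrac12(u+w+|u-w|)$: by \cite[Theorem 3.7]{K:CVPDE} one has $\mathbb E_x u(X_{\tau_k}) \to R\mu_c - R\nu^1_c$ and $\mathbb E_x w(X_{\tau_k}) \to R\mu_c - R\nu^2_c$, and by \cite[Theorem 6.3]{K:CVPDE} $\mathbb E_x|u-w|(X_{\tau_k}) \to R|\nu^1_c - \nu^2_c|$, yielding $\lim_k \mathbb E_x(u\vee w)(X_{\tau_k}) = R\mu_c - R(\nu^1_c\wedge\nu^2_c)$. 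This identifies the concentrated part of the subsolution measure as $\nu^1_c \wedge \nu^2_c$, which is not obtainable from any local Tanaka-type manipulation. Two smaller points: the increasing process $C$ from Tanaka--Meyer is not a priori continuous, so one must pass to its dual predictable projection (continuous because all jumps of $u(X), w(X)$ are totally inaccessible) before invoking Revuz duality; and the bound $\beta \in \MM_\rho$ comes from $u \le u\vee w$ and \cite[Lemma 4.6]{K:CVPDE}, not from a direct estimate on the local time.
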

\begin{proof}
By the definitions of sub- and supersolution to (\ref{eq1.1}) there exist positive $\nu^1,\nu^2\in \MM_\rho$
such that
\begin{equation}
\label{eq3.00}
u=Rf(\cdot,u)+R\mu-R\nu^1,\quad w=Rf(\cdot,u)+R\mu-R\nu^2,\quad\mbox{q.e.}
\end{equation}
By Lemma \ref{lm.m} there exist local MAFs $M,N$ such that
\[
u(X_t)=u(X_0)-\int_0^t f(X_r,u(X_r))\,dr-\int_0^t\,dA^{\mu_d}_r+\int_0^t \,dA^{\nu^1_d}_r+\int_0^t\,dM_r,\quad  t\ge 0,
\]
\[
w(X_t)=w(X_0)-\int_0^t f(X_r,w(X_r))\,dr-\int_0^t\,dA^{\mu_d}_r+\int_0^t \,dA^{\nu^2_d}_r+\int_0^t\,dN_r,\quad t\ge 0,
\]
q.a.s. By the Tanaka-Meyer formula  (see, e.g., \cite[IV.Theorem 70]{Protter}) there exists an increasing c\`adl\`ag process $C$, with $C_0=0$, such that 
\begin{align*}
(u\vee w)(X_t)&=(u\vee w)(X_0)-\int_0^t \mathbf{1}_{\{u\ge w\}}(X_r)f(X_r,u(X_r))\,dr-\int_0^t \mathbf{1}_{\{u\ge w\}}(X_r)\,dA^{\mu_d}_r\\&\quad
+\int_0^t \mathbf{1}_{\{u\ge w\}}(X_r)\,dA^{\nu^1_d}_r+\int_0^t \mathbf{1}_{\{u\ge w\}}(X_{r-})\,dM_r
\\&\quad -\int_0^t \mathbf{1}_{\{w> u\}}(X_r)f(X_r,w(X_r))\,dr-\int_0^t \mathbf{1}_{\{w>u\}}(X_r)\,dA^{\mu_d}_r\\&\quad+\int_0^t \mathbf{1}_{\{w>u\}}(X_r)\,dA^{\nu^2_d}_r
+\int_0^t \mathbf{1}_{\{w>u\}}(X_{r-})\,dN_r+ \int_0^t\,dC_r,\quad t\ge 0,\quad\mbox{q.a.s.}
\end{align*}
Hence,
\begin{align}
\label{eq3.0}
\nonumber
(u\vee w)(X_t)&=(u\vee w)(X_0)-\int_0^t f(X_r,(u\vee w)(X_r))\,dr-\int_0^t \,dA^{\mu_d}_r\\&\quad
\nonumber
+\int_0^t \mathbf{1}_{\{u\ge w\}}(X_r)\,dA^{\nu^1_d}_r+\int_0^t \mathbf{1}_{\{w>u\}}(X_r)\,dA^{\nu^2_d}_r+ \int_0^t\,dC_r\\&\quad
+\int_0^t \mathbf{1}_{\{u\ge w\}}(X_{r-})\,dM_r
+\int_0^t \mathbf{1}_{\{w>u\}}(X_{r-})\,dN_r,\quad t\ge 0,\quad\mbox{q.a.s.}
\end{align}
From the above formula, we deduce that $C$ is an additive functional of $\mathbb X$. Thus, $C$ is a positive AF
of $\mathbb X$.
Moreover, since $\mathbb X$ is a Hunt process and $u,w$ are quasi-continuous, $u(X), w(X), M,N$
have only totally inaccessible jumps (see \cite[Theorem 4.2.2, Theorem A.2.1, Theorem A.3.6]{FOT}). Therefore, dual predictable projection $\tilde C$ of $C$ is continuous.
By \cite[Theorem A.3.16]{FOT}, $\tilde C$ is a PCAF. Consequently, by \cite[Theorem 5.1.4]{FOT}, there exists a positive smooth measure $\beta$
such that $\tilde C=A^\beta$. Let $\{\tau_k\}$ be a reducing sequence for $u\vee w$. Then, by (\ref{eq3.0}),
\begin{align}
\label{eq3.4}
\nonumber
(u\vee w)(x)&=\mathbb E_x(u\vee w)(X_{\tau_k})+\mathbb E_x\int_0^{\tau_k} f(X_r,(u\vee w)(X_r))\,dr+\mathbb E_x\int_0^{\tau_k} \,dA^{\mu_d}_r\\&\quad
-\mathbb E_x\int_0^{\tau_k} \mathbf{1}_{\{u\ge w\}}(X_r)\,dA^{\nu^1_d}_r-\mathbb E_x\int_0^{\tau_k} \mathbf{1}_{\{w>u\}}(X_r)\,dA^{\nu^2_d}_r-\mathbb E_x\int_0^{\tau_k}\,dA^\beta_r,\,\mbox{q.e.}
\end{align}
By \cite[Theorem 3.7]{K:CVPDE},
\[
\mathbb E_xu(X_{\tau_k})\rightarrow R\mu_c(x)-R\nu^1_c(x),\quad \mathbb E_xw(X_{\tau_k})\rightarrow R\mu_c(x)-R\nu^2_c(x),\quad\mbox{q.e.}
\]
Moreover, by \cite[Theorem 3.7, Theorem 6.3]{K:CVPDE},
\[
\mathbb E_x|u(X_{\tau_k})-w(X_{\tau_k})|\rightarrow R|\nu^1_c-\nu^2_c|(x),\quad \mbox{q.e.}
\]
Thus,
\begin{align*}
\lim_{k\rightarrow \infty}\mathbb E_x(u\vee w)(X_{\tau_k})&=\lim_{k\rightarrow \infty}\frac12 \Big( \mathbb E_xu(X_{\tau_k})+\mathbb E_xw(X_{\tau_k})+\mathbb E_x|u(X_{\tau_k})-w(X_{\tau_k})|\Big)\\&
=R\mu_c(x)-\frac12R(\nu^1_c+\nu^2_c-|\nu^1_c-\nu^2_c|)(x)=R\mu_c(x)-R(\nu^1_c\wedge\nu^2_c)(x),\quad\mbox{q.e.}
\end{align*}
Therefore, letting $k\rightarrow \infty$ in (\ref{eq3.4}), we get
\[
u\vee w=Rf(\cdot,u\vee w)+R\mu-R(\mathbf{1}_{\{u\ge w\}}\cdot\nu^1_d)-R(\mathbf{1}_{\{w>u\}}\cdot\nu^2_d)-R\beta- R(\nu^1_c\wedge \nu^2_c),\quad\mbox{q.e.}
\]
From the fact that $u\le u\vee w$ and \cite[Lemma 4.6]{K:CVPDE}, we infer that $\beta\in\MM_\rho$.
Therefore, $u\vee w$ is a subsolution to \eqref{eq1.1}.
\end{proof}

\begin{proposition}
\label{prop3.2}
Assume \rm{\text{Car)}}. Let $\mu\in \MM_{\rho}$. Suppose that there exist positive $g\in L^1(E;\rho\cdot m)$ and $c>0$ such that
\begin{equation}
\label{eq3.1}
|f(x,y)|\le cg(x),\quad x\in E,\, y\in \BR.
\end{equation}
Then there exists a solution to (\ref{eq1.1}).
\end{proposition}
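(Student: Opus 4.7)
My plan is to obtain a solution via Schauder's fixed point theorem applied to the integral reformulation $u=Rf(\cdot,u)+R\mu$. First I would define the closed, bounded, convex subset
\[
K:=\{v\in L^1(E;\varrho\cdot m):|v-R\mu|\le cRg\ m\text{-a.e.}\}
\]
of $L^1(E;\varrho\cdot m)$; that $cRg,R|\mu|\in L^1(E;\varrho\cdot m)$ will follow from the Fubini identity $\int R|h|\,\varrho\,dm=\int|h|\,R\varrho\,dm$ combined with \eqref{eq2.1}, $g\in L^1(E;\rho\cdot m)$ and $\mu\in\MM_\rho$. I would then define $T:K\to L^1(E;\varrho\cdot m)$ by $T(v):=Rf(\cdot,v)+R\mu$; the bound \eqref{eq3.1} gives $|T(v)-R\mu|\le cRg$, so $T(K)\subset K$, and by Remark \ref{rem3.1addfc} any fixed point of $T$ is a solution of \eqref{eq1.1}.

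Next I would establish continuity of $T$ on $K$ in the $L^1(E;\varrho\cdot m)$-norm via the usual subsequence-of-subsequence argument: if $v_n\to v$ in $L^1(E;\varrho\cdot m)$, up to a sub-subsequence $v_{n_k}\to v$ $m$-a.e.\ (since $\varrho>0$); Car) and dominated convergence with dominant $2cg\in L^1(E;\rho\cdot m)$ then give $f(\cdot,v_{n_k})\to f(\cdot,v)$ in $L^1(E;\rho\cdot m)$, and the Fubini identity together with $R\varrho\le\rho$ transfers this to $L^1(E;\varrho\cdot m)$-convergence of $T(v_{n_k})$; the subsequence principle promotes it to the full sequence.

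The main obstacle is relative compactness of $T(K)$ in $L^1(E;\varrho\cdot m)$. For this, given $v_n\in K$ I would set $h_n:=f(\cdot,v_n)$ and $\phi_n:=h_n/g$ (with the convention $0/0=0$), so that $\|\phi_n\|_{L^\infty(g\cdot m)}\le c$. Since $g\cdot m$ is $\sigma$-finite on the separable space $E$, $L^1(E;g\cdot m)$ is separable and the unit ball of $L^\infty(E;g\cdot m)$ is weak-$\ast$ sequentially compact, so I can extract $\phi_{n_k}\rightharpoonup^{\ast}\phi$. For $m$-a.e.\ $x$ one has $Rg(x)<\infty$ (as $\int Rg\cdot\varrho\,dm=\int g\cdot R\varrho\,dm\le\int g\rho\,dm<\infty$), so $G(x,\cdot)\in L^1(E;g\cdot m)$ and
\[
Rh_{n_k}(x)=\int_E G(x,y)\phi_{n_k}(y)g(y)\,m(dy)\longrightarrow\int_E G(x,y)\phi(y)g(y)\,m(dy)=R(g\phi)(x);
\]
combined with the uniform domination $|Rh_{n_k}|\le cRg\in L^1(E;\varrho\cdot m)$, dominated convergence yields $Rh_{n_k}\to R(g\phi)$ in $L^1(E;\varrho\cdot m)$, giving the sought compactness. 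Schauder's fixed point theorem then delivers $u\in K$ with $u=T(u)$, which is a solution of \eqref{eq1.1}.
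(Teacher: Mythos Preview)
Your proof is correct and follows the same overall strategy as the paper: apply Schauder's fixed point theorem to the operator $T(v)=Rf(\cdot,v)+R\mu$ on a bounded convex set in $L^1(E;\varrho\cdot m)$, with continuity via Car) and dominated convergence. The only substantive difference is in the compactness step. The paper splits $T(v_n)=Rf^+(\cdot,v_n)-Rf^-(\cdot,v_n)+R\mu$ and invokes the Dellacherie--Meyer lemma on sequences of excessive functions dominated by a fixed potential (here $cRg$) to extract an $m$-a.e.\ convergent subsequence, then concludes by dominated convergence. You instead factor $f(\cdot,v_n)=\phi_n g$ with $\|\phi_n\|_{L^\infty(g\cdot m)}\le c$, use weak-$\ast$ sequential compactness of the ball in $L^\infty(E;g\cdot m)$ (available because $E$ is second countable so $L^1(E;g\cdot m)$ is separable), and test against $G(x,\cdot)\in L^1(E;g\cdot m)$ to get pointwise convergence of the potentials. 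Your route is more self-contained and purely functional-analytic, avoiding the potential-theoretic black box; the paper's route stays within the excessive-function machinery used throughout and does not need the separability observation. Both lead to the same dominated-convergence finish.
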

\begin{proof}
Set $r:= c\|g\|_{L^1(E;\rho\cdot m)}+\|\mu\|_{\MM_\rho}$. For $u\in L^1(E;\varrho\cdot m)$, we let 
\[
\Phi(u)=Rf(\cdot,u)+R\mu.
\]
Observe that by (\ref{eq3.1})
\begin{equation}
\label{eq3.2}
|\Phi(u)|\le cRg+R|\mu|.
\end{equation}
Hence
\[
\|\Phi(u)\|_{L^1(E;\varrho\cdot m)}\le r.
\]
By (\ref{eq3.1}) and Car), we get easily that $\Phi$ is continuous. Let $\{u_n\}\subset L^1(E;\varrho\cdot m)$.
Observe that
\[
\Phi(u_n)=Rf^+(\cdot,u_n)-Rf^-(\cdot,u_n)+R\mu
\]
By \cite[Lemma 94, page 306]{DellacherieMeyer}, $\{Rf^+(\cdot,u_n)\}, \{Rf^-(\cdot,u_n)\}$ have subsequence (still denoted by $(n)$) convergent $m$-a.e.
Thus, up to subsequence, $\{\Phi(u_n)\}$ is convergent $m$-a.e. By (\ref{eq3.2}) the Lebesgue dominated convergence theorem is applicable, and so,  $\{\Phi(u_n)\}$ is convergent
in $L^1(E;\varrho\cdot m)$. By Schauder's fixed point theorem, we get the result.
\end{proof}

\begin{proposition}
\label{prop3.3}
Assume {\rm Sig)}. Let $u$ be a subsolution to (\ref{eq1.1}) and $w$ be a supersolution to (\ref{eq1.1}). Then
\begin{enumerate}
\item[(1)] $u^++Rf^-(\cdot,u)\le R(\mathbf{1}_{\{u> 0\}}\cdot\mu^+_d)+R\mu_c^+$, q.e., and  
\[
\|f^-(\cdot,u)\|_{L^1(E;\rho\cdot m)}\le\|\mathbf{1}_{\{u> 0\}}\cdot\mu^+_d\|_{\MM_\rho}+\|\mu_c^+\|_{\MM_\rho},
\]
\item[(2)]  $w^-+Rf^+(\cdot,w)\le R(\mathbf{1}_{\{w\le 0\}}\cdot\mu^-_d)+R\mu_c^-$, q.e., and
\[
\|f^+(\cdot,w)\|_{L^1(E;\rho\cdot m)}\le\|\mathbf{1}_{\{w\le 0\}}\cdot\mu^-_d\|_{\MM_\rho}+\|\mu_c^-\|_{\MM_\rho}.
\]
\end{enumerate}
\end{proposition}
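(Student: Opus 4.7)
The plan is to mimic the Tanaka-Meyer strategy used in the proof of Proposition \ref{prop3.1}, but applied to $u^+=u\vee 0$ (and, for (2), to $w^-=(-w)\vee 0$) rather than to the maximum of two subsolutions. By Lemma \ref{lm.m} and the defining identity $u=Rf(\cdot,u)+R\mu-R\nu^1$ for some positive $\nu^1\in\MM_\rho$, one first records the semimartingale decomposition
\[
u(X_t)=u(X_0)-\int_0^t f(X_r,u(X_r))\,dr-A^{\mu_d}_t+A^{\nu^1_d}_t+M_t,\qquad t\ge 0,
\]
quasi almost surely, with $M$ a local MAF reduced by $\{\tau_k\}$. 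Applying the generalized Tanaka-Meyer formula to $u^+(X)$ then gives
\[
u^+(X_t)=u^+(X_0)+\int_0^t \mathbf1_{\{u(X_{r-})>0\}}\,du(X_r)+K_t,
\]
where $K$ is a positive increasing AF collecting the local time at zero and the nonnegative jump corrections $u^+(X_r)-u^+(X_{r-})-\mathbf1_{\{u(X_{r-})>0\}}\Delta u(X_r)$.

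The key simplification furnished by Sig) is $\mathbf1_{\{u>0\}}f(\cdot,u)=-f^-(\cdot,u)$, since $f^-(\cdot,u)\equiv 0$ on $\{u\le 0\}$. Substituting, splitting $A^{\mu_d}=A^{\mu^+_d}-A^{\mu^-_d}$, taking $\mathbb E_x$ at $\tau_k$ (which kills the martingale term), and discarding the three nonnegative contributions coming from $A^{\mu^-_d}$, $A^{\nu^1_d}$ and $K$ yields
\[
u^+(x)+\mathbb E_x\int_0^{\tau_k}f^-(X_r,u(X_r))\,dr\le \mathbb E_x u^+(X_{\tau_k})+\mathbb E_x\int_0^{\tau_k}\mathbf1_{\{u>0\}}(X_r)\,dA^{\mu^+_d}_r.
\]
Letting $k\to\infty$, monotone convergence handles the $dr$ and $dA^{\mu^+_d}$ integrals. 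For the boundary term I would invoke \cite[Theorem 3.7, Theorem 6.3]{K:CVPDE}, exactly as in the proof of Proposition \ref{prop3.1}, to get $\mathbb E_x u(X_{\tau_k})\to R\mu_c-R\nu^1_c$ and $\mathbb E_x|u(X_{\tau_k})|\to R|\mu_c-\nu^1_c|$ q.e., whence
\[
\mathbb E_x u^+(X_{\tau_k})=\tfrac12\bigl(\mathbb E_x u(X_{\tau_k})+\mathbb E_x|u(X_{\tau_k})|\bigr)\longrightarrow R(\mu_c-\nu^1_c)^+\le R\mu^+_c,
\]
where the last step uses $\nu^1_c\ge 0$. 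This gives the q.e.\ pointwise statement in (1), and the $L^1$ estimate follows by integrating the resulting inequality against $\varrho\cdot m$ and invoking Revuz duality $\int R\nu\cdot\varrho\,dm=\int R\varrho\,d\nu$ together with $R\varrho\le\rho$ to convert potentials on the right back into $\MM_\rho$ norms. Assertion (2) is then obtained from (1) by the substitution $\tilde u:=-w$, $\tilde f(x,y):=-f(x,-y)$, $\tilde\mu:=-\mu$, noting that $\tilde u$ is a subsolution to $-A\tilde u=\tilde f(\cdot,\tilde u)+\tilde\mu$ (with witness measure $\nu^2$) and that $\tilde f$ still satisfies Sig).

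\textbf{Main obstacle.} The delicate point is justifying the Tanaka-Meyer step rigorously in the present c\`adl\`ag/purely-jumping setting: one has to argue that the jump-correction AF $K$ is positive and of bounded variation, that its dual predictable projection is a PCAF (so it can be identified with a smooth measure exactly as in the handling of the process $C$ in the proof of Proposition~\ref{prop3.1}), and that the reducing sequence $\{\tau_k\}$ really annihilates the expectation of $\int_0^{\tau_k}\mathbf1_{\{u(X_{r-})>0\}}\,dM_r$. A secondary subtlety is making the passage from the q.e.\ potential inequality to the $L^1(E;\rho\cdot m)$ estimate clean, since $\rho$ is only excessive and not a genuine potential, so one must carefully exploit $R\varrho\le\rho$ via Revuz duality rather than testing directly against $\rho$.
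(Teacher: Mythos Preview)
Your Tanaka--Meyer argument for the pointwise q.e.\ inequality is essentially the paper's own proof: the same semimartingale decomposition from Lemma~\ref{lm.m}, the same use of Sig) to rewrite $\mathbf1_{\{u>0\}}f(\cdot,u)=-f^-(\cdot,u)$, the same boundary-term analysis via $u^+=\tfrac12(u+|u|)$ and \cite[Theorems 3.7, 6.3]{K:CVPDE}, and the same symmetry reduction of (2) to (1). Your ``main obstacle'' concerning the Tanaka--Meyer step is overstated: since the correction process $K$ is nonnegative and is simply discarded to reach an \emph{inequality}, there is no need to identify its dual predictable projection with a smooth measure here (that device in Proposition~\ref{prop3.1} was only required because an equality was sought), and the stochastic integral against $M$ is controlled at $\tau_k$ directly by \eqref{eq.lmm2}.

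The genuine gap is in your passage to the $L^1(E;\rho\cdot m)$ estimate. Integrating the q.e.\ inequality against $\varrho\cdot m$ and using $\int_E R\nu\,\varrho\,dm=\int_E R\varrho\,d\nu\le\int_E\rho\,d\nu$ does bound the right-hand side by $\MM_\rho$ norms, but on the left it produces only $\int_E f^-(\cdot,u)\,R\varrho\,dm$; since $R\varrho\le\rho$, the inequality goes the \emph{wrong way} to recover $\|f^-(\cdot,u)\|_{L^1(E;\rho\cdot m)}$. You flagged this as a secondary subtlety but your proposed fix (``carefully exploit $R\varrho\le\rho$ via Revuz duality'') is precisely what fails. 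The paper instead invokes \cite[Lemma~4.6]{K:CVPDE}, which in effect asserts that for positive measures $R\nu\le R\mu$ q.e.\ with $\mu\in\MM_\rho$ forces $\|\nu\|_{\MM_\rho}\le\|\mu\|_{\MM_\rho}$; this genuinely uses the excessivity of $\rho$ (approximation from below by potentials) and cannot be recovered by pairing with any single $\varrho$.
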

\begin{proof}
The proofs of (1) and (2) are analogous. We shall give the proof  of (1). Since $u$ is a subsolution, there exists a positive $\nu\in \MM_\rho$
such that
\[
u=Rf(\cdot,u)+R\mu-R\nu,\quad\mbox{q.e.}
\]
By Lemma \ref{lm.m} and the Tanaka-Meyer formula
\begin{align*}
u^+(x)-\mathbb E_x\int_0^{\tau_k}\mathbf{1}_{\{u>0\}}(X_r)f(X_r,u(X_r))\,dr&\le \mathbb E_xu^+(X_{\tau_k})+\mathbb E_x\int_0^{\tau_k}\mathbf{1}_{\{u>0\}}(X_r)\,dA^{\mu_d}_r\\&\quad-\mathbb E_x\int_0^{\tau_k}\mathbf{1}_{\{u>0\}}(X_r)\,dA^{\nu_d}_r,\quad\mbox{q.e.},
\end{align*}
where $\{\tau_k\}$ is a reducing sequence for $u$.
From this and Sig), we infer that
\[
u^+(x)+\mathbb E_x\int_0^{\tau_k}f^-(X_r,u(X_r))\,dr\le \mathbb E_xu^+(X_{\tau_k})+\mathbb E_x\int_0^{\tau_k}\mathbf{1}_{\{u>0\}}(X_r)\,dA^{\mu_d}_r,\quad\mbox{q.e.}
\]
Letting $k\rightarrow \infty$ and using \cite[Theorem 3.7, Theorem 6.3]{K:CVPDE} yields
\[
u^++Rf^-(\cdot,u)\le R\mu_c^++R(\mathbf{1}_{\{u>0\}}\cdot\mu_d),\quad\mbox{q.e.}
\]
By \cite[Lemma 4.6]{K:CVPDE}, we get (1).
\end{proof}

\begin{theorem}
\label{th3.1}
Let $\mu\in\MM_\rho$. Assume {\rm Car), Int)}. 
\begin{enumerate}
\item[(1)]  Let  $\psi\in L^1(E;\varrho\cdot m)$ be such that $f(\cdot,\psi)\in L^1(E;\rho\cdot m)$.
Suppose that there exists a subsolution $\underline u$ to (\ref{eq1.1}) such that $\underline u\le \psi\,\, m$-a.e.
Then there exists a maximal subsolution $u^*$ to (\ref{eq1.1}) such that $u^*\le \psi\,\, m$-a.e.
\item[(2)]  Assume that there exists a subsolution $\underline u$ to (\ref{eq1.1}) and a supersolution $\overline u$ to (\ref{eq1.1})
such that $\underline u\le \overline u,\,m$-a.e. Then there exists  a maximal solution $u$ to (\ref{eq1.1}) such that $\underline u\le u\le \overline u,\, m$-a.e. The maximal solution $u$ is at the same time a maximal subsolution lying between $\underline u$ and $\overline u\,\,m$-a.e.
\item[(3)]  Assume {\rm Sig)}. For any subsolution $\underline u$ to \eqref{eq1.1} we have $\underline u\le R\mu^+\,\, m$-a.e. 
\item[(4)]  Assume {\rm Sig)}. Let $\psi:E\to \BR\cup \{+\infty\}$ be a Borel measurable function. 
Suppose that there exists a subsolution $\underline u$ to \eqref{eq1.1} such that $\underline u\le \psi\,\,m$-a.e.
Then there exists a maximal subsolution $u^*$ to \eqref{eq1.1} such that $u^*\le \psi\,\, m$-a.e.
\end{enumerate}
\end{theorem}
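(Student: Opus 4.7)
Plan for (1) and (2). The proof of (1) is a classical Perron argument. Set $\mathcal S_\psi:=\{v:v\text{ is a subsolution of }\eqref{eq1.1},\ \underline u\le v\le\psi\text{ $m$-a.e.}\}$. By assumption $\underline u\in\mathcal S_\psi$, and by Proposition~\ref{prop3.1} the set is stable under pairwise maximum. By a standard exhaustion argument in $L^1(E;\varrho\cdot m)$ (where $\mathcal S_\psi$ sits by Remark~\ref{rem3.1addfc}) I extract an increasing sequence $v_n\in\mathcal S_\psi$ with $v_n\nearrow u^\ast:=\esssup\mathcal S_\psi$. The order-interval bound $\underline u\le v_n\le\psi$ together with Int) supplies the uniform dominator $h:=\sup_{y\in[\underline u,\psi]}|f(\cdot,y)|\in L^1(E;\rho\cdot m)$; combined with Car) and dominated convergence, this yields $Rf(\cdot,v_n)\to Rf(\cdot,u^\ast)$. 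From $v_n=Rf(\cdot,v_n)+R\mu-R\nu_n$ with $\nu_n\ge 0$ in $\MM_\rho$, I pass to the limit and identify $R\nu^\ast:=R\mu+Rf(\cdot,u^\ast)-u^\ast\ge 0$ as the potential of a positive measure $\nu^\ast\in\MM_\rho$, whence $u^\ast\in\mathcal S_\psi$. Part (2) follows by applying (1) to $\psi:=\overline u$ to obtain a maximal subsolution $u^\ast\in[\underline u,\overline u]$, and then showing that the associated residual $\nu^\ast$ vanishes; the latter is the standard Perron perturbation: assuming $\nu^\ast\neq 0$, I solve via Proposition~\ref{prop3.2} an auxiliary problem with truncated $f$ and data $\mu-(1-\varepsilon)\nu^\ast$, then use Proposition~\ref{prop3.1} and Lemma~\ref{lm.m} to recognise the result as a subsolution of \eqref{eq1.1} that strictly dominates $u^\ast$ on a capacity-positive subset of $\mathrm{supp}(\nu^\ast)\cap\{u^\ast<\overline u\}$, contradicting maximality. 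Maximality of $u^\ast$ as a solution, and as a subsolution in $[\underline u,\overline u]$, then follows from the corresponding maximality in $\mathcal S_{\overline u}$.

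Plan for (3) and (4). Part (3) is a direct consequence of Proposition~\ref{prop3.3}(1): for any subsolution $u$ one has $u^+\le R(\mathbf{1}_{\{u>0\}}\cdot\mu_d^+)+R\mu_c^+\le R\mu^+$ q.e., hence $u\le R\mu^+$ $m$-a.e. For (4), the consequence of (3) is that the upper bound $\psi$ can be replaced by $\tilde\psi:=\psi\wedge R\mu^+$, which is finite $m$-a.e.\ and lies in $L^1(E;\varrho\cdot m)$; the search reduces to maximal subsolutions in $[\underline u,\tilde\psi]$. I mimic the scheme of (1) to obtain an increasing sequence $v_n\nearrow u^\ast$ of subsolutions in $[\underline u,\tilde\psi]$. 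The difficulty is that Int) cannot be invoked with the pair $(\underline u,\tilde\psi)$, since $f(\cdot,\tilde\psi)$ is not a priori in $L^1(E;\rho\cdot m)$. I instead dominate $f(\cdot,v_n)$ piecewise using Sig): Proposition~\ref{prop3.3}(1) gives $\|f^-(\cdot,v_n)\|_{L^1(E;\rho\cdot m)}\le\|\mu^+\|_\rho$ uniformly, while Sig) forces $f^+(\cdot,v_n)=0$ on $\{v_n>0\}$ and, on $\{v_n\le 0\}$, $v_n$ takes values in the fixed order-interval $[\underline u,0]$; there $f^+(\cdot,v_n)$ is dominated by $\sup_{y\in[\underline u,0]}|f(\cdot,y)|$, which I put in $L^1(E;\rho\cdot m)$ by applying Int) to a suitable reference pair built from $\underline u$ (exploiting Sig) and the fact that $f(\cdot,\underline u)\in L^1(E;\rho\cdot m)$ to establish the $L^1$-integrability of the second endpoint). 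Dominated convergence and the same limit passage as in (1) then give $u^\ast\in\mathcal S_\psi$, maximal by construction.

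Main obstacle. The genuinely hard step is the perturbation in (2): eliminating the residual measure $\nu^\ast$ in order to promote the maximal subsolution to a solution. This requires constructing an auxiliary subsolution of \eqref{eq1.1} strictly exceeding $u^\ast$ on a capacity-positive subset of $\mathrm{supp}(\nu^\ast)$, via transferring a small piece of $\nu^\ast$ to the source, solving a truncated problem by Proposition~\ref{prop3.2}, and re-assembling through the lattice property of Proposition~\ref{prop3.1} together with the Tanaka--Meyer representation of Lemma~\ref{lm.m}. The secondary technical issue is the control of $f^+(\cdot,v_n)$ in (4), where the inadmissibility of the upper bound $\tilde\psi$ for Int) forces one to exploit Sig) to reduce domination to a fixed reference order-interval depending only on $\underline u$.
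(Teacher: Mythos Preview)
Your treatment of parts (1), (3), and (4) is essentially the paper's argument. In (1) the paper runs the same Perron scheme (maximising $\int v\,\varrho\,dm$ over subsolutions below $\psi$, taking successive maxima via Proposition~\ref{prop3.1}, passing to the limit with the dominator provided by Int), and identifying the limit of $R\nu_n$ as the potential of a measure via the Dellacherie--Meyer/Getoor--Glover machinery). In (3) the paper repeats a direct Tanaka--Meyer computation, but your appeal to Proposition~\ref{prop3.3}(1) is perfectly valid and shorter. In (4) the paper does exactly what you sketch: bound $f^-(\cdot,u_n)$ uniformly via Proposition~\ref{prop3.3} and use Int) on the interval $[-u_1^-,0]$ (note $f(\cdot,0)=0$ by Sig), so the ``second endpoint'' issue you flag is not an obstacle).

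The genuine divergence is in (2), and here the paper's route is both different and decisively simpler; your plan contains a gap. The paper \emph{does not} argue by contradiction on $\nu^*$. Instead it first proves existence of \emph{some} solution in $[\underline u,\overline u]$: set $\hat f(x,y):=f(x,(y\wedge\overline u(x))\vee\underline u(x))$, which is bounded by the Int)-dominator, solve $-A\hat u=\hat f(\cdot,\hat u)+\mu$ via Proposition~\ref{prop3.2}, and use Tanaka--Meyer (exploiting that $\hat f(\cdot,\hat u)=f(\cdot,\underline u)$ on $\{\hat u<\underline u\}$, etc.) to force $\underline u\le\hat u\le\overline u$, whence $\hat f(\cdot,\hat u)=f(\cdot,\hat u)$ and $\hat u$ solves \eqref{eq1.1}. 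Then, having obtained the maximal subsolution $u^*$ from (1) with $\psi=\overline u$, one \emph{repeats} this existence step on the interval $[u^*,\overline u]$ to produce a solution $\hat u\in[u^*,\overline u]$; since $\hat u$ is in particular a subsolution $\le\overline u$, maximality gives $\hat u\le u^*$, hence $\hat u=u^*$ and $u^*$ is a solution.

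Your perturbation scheme, as stated, does not deliver the ``strict domination on a capacity-positive subset of $\mathrm{supp}(\nu^*)\cap\{u^*<\overline u\}$'' that you announce. Without monotonicity of $f$ there is no strong comparison principle to force the auxiliary solution $w$ (for data $\mu-(1-\varepsilon)\nu^*$ and $f$ truncated to $[u^*,\overline u]$) strictly above $u^*$ anywhere. What the Tanaka--Meyer machinery actually yields is the two-sided bound $u^*\le w\le\overline u$; then maximality of $u^*$ forces $w=u^*$, and comparing the two equations for $u^*$ gives $\varepsilon R\nu^*=0$. But this is precisely the paper's argument with a superfluous $\varepsilon$ (take $\varepsilon=1$), and it is not a strict-domination contradiction. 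So what you identify as the ``main obstacle'' is in fact a detour; the paper's direct construction removes it entirely.
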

\begin{proof}
Ad (1). Set
\[
\mathcal {S}_\psi=\{v: v\,\, \mbox{is a subsolution to}\,\, (\ref{eq1.1})\,\,  \mbox{and}\,\, v\le \psi\,\, m\mbox{-a.e.}\}
\]
By assumptions $\mathcal S_\psi$ is nonempty. By Remark \ref{rem3.1addfc}, $\mathcal S_\psi\subset L^1(E;\varrho\cdot m)$. Let
\[
\alpha=\sup_{v\in\mathcal S_\psi}\int_E v\varrho\,dm.
\]
By the assumptions made on $\psi$, $\alpha<\infty$. Let $\{v_n\}\subset \mathcal S_\psi$ be such that $\int_E v_n\varrho\,dm\nearrow \alpha$. Set
\[
u_n=\max\{v_1,\dots,v_n\},\quad n\ge 1.
\]
By Proposition \ref{prop3.1}, $\{u_n\}\subset \mathcal S_\psi$. Set $u^*=\sup_{n\ge 1} u_n$. 
Clearly, $\underline u\le u^*\le \psi\,\, m$-a.e. By the assumptions made on $\psi$ and Int), we have $f(\cdot,u^*)\in L^1(E;\rho\cdot m)$.
Since $u_n$ is a subsolution to (\ref{eq1.1}) there exists a positive $\nu_n\in\MM_\rho$ such that
\begin{equation}
\label{eq3.8}
u_n=Rf(\cdot,u_n)+R\mu-R\nu_n\quad\mbox{q.e.}
\end{equation}
Clearly,
\begin{equation}
\label{eq3.8burz}
u_1\le u_n\le u^*,\quad n\ge 1\,\,\,\mbox{q.e.}
\end{equation}
Hence
\[
|f(\cdot,u_n)|\le \sup_{y\in [u_1,u^*]} |f(\cdot,y)|=: g_1.
\]
By Int), $g_1\in L^1(E;\rho\cdot m)$. Therefore, by the Lebesgue dominated convergence theorem,
\[
\int_E|Rf(\cdot,u_n)-Rf(\cdot,u^*)|\varrho\,dm\le \int_E |f(\cdot,u_n)-f(\cdot,u^*)|\rho\,dm\rightarrow 0,\quad n\rightarrow \infty.
\]
Thus, up to subsequence, $\lim_{n\rightarrow\infty} Rf(\cdot,u_n)=Rf(\cdot,u^*)\,\, m$-a.e. This in turn implies that $(R\nu_n)_{n\ge 1}$ is convergent $m$-a.e.
Let $\eta:=\lim_{n\rightarrow \infty} R\nu_n\,\, m$-a.e. By \cite[Lemma 94, page 306]{DellacherieMeyer}, $\eta$ has an $m$-version (still denoted by $\eta$) such that $\eta$ is an excessive function. By  \eqref{eq3.8},\eqref{eq3.8burz}, we have
\[
R\nu_n\le R|f(\cdot,u_n)|+R|f(\cdot,u_1)|+R|\mu|+R|\nu_1| \quad\mbox{q.e.}
\]
Letting $n\rightarrow \infty$ and using \cite[page 197]{BG} yields
\[
\eta\le R|f(\cdot,u^*)|+R|f(\cdot,u_1)|+R|\mu|+R|\nu_1|.
\]
By \cite[Proposition 3.9]{GetoorGlover}, there exists a positive Borel measure $\beta$ on $E$ such that $\eta=R\beta$. From the above inequality and \cite[Lemma 4.6]{K:CVPDE}, we conclude that $\beta\in \MM_\rho$. Going back to (\ref{eq3.8}) and letting $n\rightarrow \infty$, we deduce from what has been already proven that
\[
u^*=Rf(\cdot,u^*)+R\mu-R\beta\quad\mbox{q.e.}
\]
Thus, $u^*\in \mathcal S_\psi$. What is left is to show that $u^*$ is maximal. Let $v\in \mathcal S_\psi$. 
Clearly $v\vee u_n\nearrow v\vee u^*$. By Proposition \ref{prop3.1}, $v\vee u_n\in \mathcal S_\psi$. Thus,
\[
\alpha =\lim_{n\rightarrow \infty} \int_E v_n\varrho\,dm\le \lim_{n\rightarrow \infty}\int_E u_n\varrho\,dm \le \lim_{n\rightarrow \infty}\int_Ev\vee u_n\varrho\,dm\le\alpha.
\]
By the Lebesgue monotone convergence theorem 
\[
\int_E u^*\varrho\,dm=\int_E v\vee u^*\varrho\,dm=\alpha.
\]
Therefore,
\[
\int_E(v\vee u^*-u^*)\varrho\,dm=0.
\]
Hence, $u^*\le v\vee u^*\,\, m$-a.e., which implies that $v\le u^*\,\, m$-a.e.

Ad (2). Set
\[
\hat f(x,y)=f(x,(y\wedge \overline u(x))\vee \underline u(x)),\quad x\in E,\, y\in\BR.
\]
By Int), $\hat f$ satisfies (\ref{eq3.1}) with $g(x):= \sup_{y\in [\underline u(x),\overline u(x)]}|f(x,y)|,\, x\in E$. Therefore,  there exists a solution $\hat u$ to (\ref{eq1.1}) with $f$ replaced by $\hat f$. 
Since $\underline u$ is a subsolution to (\ref{eq1.1}), there exists a positive measure $\nu\in\MM_\rho$
such that
\[
\underline u=Rf(\cdot,\underline u)+R\mu-R\nu\quad \mbox{q.e.}
\]
By Lemma \ref{lm.m} and  the Tanaka-Meyer formula
\begin{align*}
(\underline u(x)-\hat u(x))^+&\le \mathbb E_x(\underline u(X_{\tau_k})-\hat u(X_{\tau_k}))^+\\&
\quad+\mathbb E_x\int_0^{\tau_k}\mathbf{1}_{\{\underline u>\hat u\}}(X_r)(f(X_r,\underline u(X_r))-\hat f(X_r,\hat u(X_r)))\,dr
\\&\quad -\mathbb E_x\int_0^{\tau_k}\mathbf{1}_{\{\underline u>\hat u\}}(X_r) \,dA^{\nu_d}_r\quad\mbox{q.e.}
\end{align*}
Observe that, by the definition of $\hat f,\, \mathbf{1}_{\{\underline u>\hat u\}}(f(\cdot,\underline u)-\hat f(\cdot,\hat u))\le 0$. Thus,
\[
(\underline u(x)-\hat u(x))^+\le \mathbb E_x(\underline u(X_{\tau_k})-\hat u(X_{\tau_k}))^+\quad \mbox{q.e.}
\]
Letting $k\rightarrow \infty$ and using \cite[Theorem 3.6, Theorem 6.3]{K:CVPDE} yields 
\[
(\underline u(x)-\hat u(x))^+\le R(-\nu_c)^+=0\quad \mbox{q.e.},
\]
and so  $\underline u\le \hat u\,\, m$-a.e. Analogous reasoning for $\hat u, \overline u$ shows that $\hat u\le \overline u\, \,m$-a.e.
Consequently,  $\hat f(\cdot,\hat u)=f(\cdot,\hat u)\,\, m$-a.e. Therefore, $\hat u$ is, in fact, a solution to (\ref{eq1.1}) and $\underline u\le\hat u\le \overline u\,\, m$-a.e.
Now, we shall show the existence of a maximal solution to (\ref{eq1.1}) lying between $\underline u, \overline u$. 
Applying (1) with $\psi=\overline u$
gives  the existence  of a maximal subsolution $u^*$ to (\ref{eq1.1}) such that $\underline u\le u^*\le\overline u\,\,m$-a.e. By what has been already proven,
there exists a solution $\hat u$ to (\ref{eq1.1}) such that $u^*\le \hat u\le \overline u\,\, m$-a.e. On the other hand, since $\hat u$ also is 
a subsolution to (\ref{eq1.1}), $\hat u\le u^*\,\, m$-a.e. Thus, $\hat u=u^*\,\,m$-a.e. Now, we easily deduce that $u^*$ is a maximal solution to (\ref{eq1.1}).

Ad (3) Let $w=R\mu^+$, and  $v$ be a subsolution to \eqref{eq1.1}. By the definition of a subsolution to (\ref{eq1.1}) there exists a positive $\nu\in\MM_\rho$
such that
\[
v=Rf(\cdot,v)+R\mu-R\nu\quad\mbox{q.e.}
\]
By Lemma \ref{lm.m} and the Tanaka-Meyer formula
\begin{align*}
(v(x)-w(x))^+&\le \mathbb E_x(v(X_{\tau_k})-w(X_{\tau_k}))^++\mathbb E_x\int_0^{\tau_k}\mathbf{1}_{\{v>w\}}(X_r)f(X_r,v(X_r))\,dr\\&
\quad-\mathbb E_x\int_0^{\tau_k}\mathbf{1}_{\{u>w\}}(X_r)\,dA^{\mu_d^-}-\mathbb E_x\int_0^{\tau_k}\mathbf{1}_{\{u>w\}}(X_r)\,dA^{\nu_d},
\end{align*}
where $\{\tau_k\}$ is a reducing sequence for $v-w$.
Since $w$ is positive, we have, by Sig), that $\mathbf{1}_{\{v>w\}}f(\cdot,v)\le 0$. Consequently,
\[
(v(x)-w(x))^+\le \mathbb E_x(v(X_{\tau_k})-w(X_{\tau_k}))^+\quad\mbox{q.e.}
\]
By \cite[Theorem 3.7, Theorem 6.3]{K:CVPDE},
\[
\lim_{k\rightarrow \infty} \mathbb E_x(v(X_{\tau_k})-w(X_{\tau_k}))^+=R(\mu_c-\nu_c-\mu_c^+)^+(x)=0\quad\mbox{q.e.}
\]
Therefore, $v\le w\,\, m$-a.e. 

Ad (4). We maintain  the notation of the proof of (1). The proof of (4) runs exactly the same lines as the proof of (1) but with different justification
of the facts that $\alpha<\infty$ and $f(\cdot,u^*)\in L^1(E;\rho\cdot m)$.  The first property is a consequence of (3). 
For the second one, observe that, by Sig),
\begin{equation}
\label{eq3.7}
|f(\cdot,u^*)|=f^-(\cdot,u^*)+f(\cdot,-(u^*)^-).
\end{equation}
By Proposition \ref{prop3.3},
\[
\|f^-(\cdot,u_n)\|_{L^1(E;\rho\cdot m)}\le \|\mu\|_{\MM_\rho}
\]
By Fatou's lemma $f^-(\cdot,u^*)\in L^1(E;\rho\cdot m)$. At the same time, we have
\[
-u_1^-\le -(u^*)^-\le 0.
\]
Therefore by Int), $f(\cdot,-(u^*)^-)\in L^1(E;\rho\cdot m)$. Consequently,  by (\ref{eq3.7}), $f(\cdot,u^*)\in L^1(E;\rho\cdot m)$.
\end{proof}

\begin{proposition}
\label{prop3.4}
Assume {\rm Car), Int)}.
Let $\underline u$ (resp. $\overline u$) be  a subsolution (resp. supersolution) to (\ref{eq1.1}).
Let $\tilde \mu\in\MM_\rho$, $\tilde f$ be a measurable function on $E\times\BR$, and 
 $\tilde u $ be  a solution to (\ref{eq1.1}) with $f,\mu$ replaced by $\tilde f,\tilde \mu$ such that $\underline u\le \tilde u\le \overline u\,\,m$-a.e.
Let $u$ be a maximal solution to (\ref{eq1.1}) such that  $\underline u\le  u\le \overline u\,\,m$-a.e.
Assume that $\tilde f(\cdot,\tilde u)\le f(\cdot,\tilde u)\,\,m$-a.e. and   $\tilde \mu\le\mu$. 
Then $\tilde u\le u\,\, m$-a.e.
\end{proposition}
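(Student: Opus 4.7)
The plan is to exhibit $\tilde u$ as a subsolution of the original problem \eqref{eq1.1} sandwiched between $\underline u$ and $\overline u$, and then invoke the maximality statement of Theorem \ref{th3.1}(2), which asserts that the maximal solution $u$ is simultaneously the maximal subsolution lying between $\underline u$ and $\overline u$ $m$-a.e.

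First I would start from the integral identity satisfied by $\tilde u$ as a solution to the perturbed equation, namely $\tilde u = R\tilde f(\cdot,\tilde u) + R\tilde\mu$ $m$-a.e., and rewrite it in the form
\[
\tilde u = Rf(\cdot,\tilde u) + R\mu - R\nu,\qquad \nu := \bigl(f(\cdot,\tilde u)-\tilde f(\cdot,\tilde u)\bigr)\cdot m + (\mu-\tilde\mu).
\]
The hypotheses $\tilde f(\cdot,\tilde u)\le f(\cdot,\tilde u)$ $m$-a.e.\ and $\tilde\mu\le\mu$ guarantee that both summands making up $\nu$ are positive, so $\nu$ is a positive Borel measure on $E$.

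Next I would verify $\nu\in\MM_\rho$. The contribution from $\mu-\tilde\mu$ is harmless since $\int_E\rho\,d(\mu-\tilde\mu)\le\|\mu\|_\rho+\|\tilde\mu\|_\rho<\infty$. For the absolutely continuous part one must check $f(\cdot,\tilde u)\in L^1(E;\rho\cdot m)$: by Remark \ref{rem3.1addfc} combined with \eqref{eq2.1}, the sub- and supersolutions $\underline u,\overline u$ belong to $L^1(E;\varrho\cdot m)$, so hypothesis (Int) applied to the pair $\underline u\le\overline u$ yields $\sup_{y\in[\underline u,\overline u]}|f(\cdot,y)|\in L^1(E;\rho\cdot m)$, a function that dominates $|f(\cdot,\tilde u)|$ because $\underline u\le\tilde u\le\overline u$; meanwhile $\tilde f(\cdot,\tilde u)\in L^1(E;\rho\cdot m)$ by the very definition of a solution. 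Hence $\nu\in\MM_\rho$, and the rewritten identity above presents $\tilde u$ as a subsolution of \eqref{eq1.1} satisfying $\underline u\le\tilde u\le\overline u$ $m$-a.e.

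The final step is then a one-line appeal: Theorem \ref{th3.1}(2) states that the maximal solution $u$ between $\underline u$ and $\overline u$ is in fact a maximal subsolution in that range, so $\tilde u\le u$ $m$-a.e. I expect the only delicate point to be the $L^1(E;\rho\cdot m)$-integrability of $f(\cdot,\tilde u)$ used to ensure $\nu\in\MM_\rho$; once this is in place the remainder of the argument is purely formal.
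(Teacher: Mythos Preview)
Your proof is correct and follows essentially the same approach as the paper: rewrite $\tilde u$ as a subsolution of \eqref{eq1.1} via the decomposition $-A\tilde u=f(\cdot,\tilde u)+\mu-(f(\cdot,\tilde u)-\tilde f(\cdot,\tilde u))-(\mu-\tilde\mu)$, use Int) to secure $f(\cdot,\tilde u)\in L^1(E;\rho\cdot m)$, and then invoke Theorem~\ref{th3.1}(2). The paper's version is terser, but the argument is the same; one minor omission in your write-up is that applying Int) to the pair $\underline u\le\overline u$ also requires $f(\cdot,\underline u),f(\cdot,\overline u)\in L^1(E;\rho\cdot m)$, which holds automatically by Definition~\ref{def.subsup}.
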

\begin{proof}
Observe that
\[
-A\tilde u=f(\cdot,\tilde u)+\mu-(f(\cdot,\tilde u)-\tilde f(\cdot,\tilde u))-(\mu-\tilde \mu).
\]
By Int), $f(\cdot,\tilde u)\in L^1(E;\rho\cdot m)$. Therefore, $\tilde u$ is a subsolution to
\[
-Au=f(\cdot,u)+\mu.
\]
By Theorem \ref{th3.1}(2), $u$ is a maximal subsolution to the above problem lying between $\underline u$ and $\overline u\,\,m$-a.e. Thus, $\tilde u\le u\,\,m$-a.e.
\end{proof}

\section{Existence of maximal and minimal good measure}
\label{sec5}

{\bf Standing assumption}: In the remainder of the paper, we assume that  conditions Car), Sig), Int) and qM)  are in force.

We begin with the following lemma which will be crucial in our proof techniques 
when passing to the limit in variety of semilinear equations.

\begin{lemma}
\label{lmj.jlm}
Assume that $\{u_n\}$ is a sequence of quasi-continuous functions on $E$, $u\in\BB(E)$
and $\{\tau_k\}$ is a non-decreasing sequence of stopping times such that $\tau_k\to \zeta\,\,$q.a.s.
Suppose that for some $p>0$ and any $k\ge 1$,
\begin{equation}
\label{eqj.j3}
\mathbb E_x\sup_{0\le t\le\tau_k}|u_n(X_t)-u(X_t)|^p\to 0,\quad\text{as}\,\,n\to \infty\,\, q.e.
\end{equation}
Then $u$ is quasi-continuous.
\end{lemma}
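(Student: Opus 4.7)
My approach rests on the probabilistic characterization of quasi-continuity: a Borel function $v$, finite q.e., is quasi-continuous iff the map $t \mapsto v(X_t)$ is right-continuous on $[0,\zeta)$ $P_x$-a.s. for q.e. $x\in E$ (a consequence of \cite[Theorem 4.2.2]{FOT}). Since each $u_n$ is quasi-continuous, the process $u_n(X)$ has this right-continuity property along paths, and right-continuity is preserved under uniform limits on closed intervals. The hypothesis provides exactly such uniform convergence on every localizing interval $[0,\tau_k]$, so the property should propagate to $u$.

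\textbf{Finiteness and subsequence extraction.} Evaluating the hypothesis at $t=0$, where $X_0 = x$ under $P_x$, one obtains $|u_n(x)-u(x)|^p\le \mathbb{E}_x\sup_{t\le\tau_k}|u_n(X_t)-u(X_t)|^p \to 0$ for q.e. $x$, so in particular $u$ is finite q.e. Fix an $x$ in the q.e. set where the hypothesis holds and where, for every $n$, $t\mapsto u_n(X_t)$ is right-continuous on $[0,\zeta)$ $P_x$-a.s.\ (the union of the corresponding polar sets is polar). For each fixed $k$, Chebyshev's inequality combined with the hypothesis yields $\sup_{t\le\tau_k}|u_n(X_t)-u(X_t)| \to 0$ in $P_x$-probability; extract a subsequence along which the convergence is $P_x$-a.s. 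A diagonal extraction across $k$ then produces a single subsequence $(u_{n_\ell})$ (depending on $x$) such that
\[
\sup_{0\le t\le\tau_k}|u_{n_\ell}(X_t)-u(X_t)|\longrightarrow 0 \quad P_x\text{-a.s.}\ \text{for every } k\ge 1.
\]

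\textbf{Pathwise right-continuity and conclusion.} On the $P_x$-full-measure event where this uniform convergence holds for every $k$ and where every $u_{n_\ell}(X)$ is right-continuous on $[0,\zeta)$, the function $t\mapsto u(X_t)$ is a uniform limit on each $[0,\tau_k]$ of right-continuous functions, hence is itself right-continuous on each such interval. Since $\tau_k\nearrow\zeta$ q.a.s., we conclude that $t\mapsto u(X_t)$ is right-continuous on $[0,\zeta)$ $P_x$-a.s., for q.e. $x\in E$. Invoking the converse direction of the probabilistic characterization then yields quasi-continuity of $u$.

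\textbf{Main obstacle.} The principal technical issue is the bookkeeping of polar exceptional sets: the hypothesis is q.e.\ and indexed by $k$, quasi-continuity of each $u_n$ carries its own exceptional set, and the subsequence extraction introduces $x$-dependence of $(n_\ell)$. All of these aggregate into a single polar set by countable union, and since the $P_x$-a.s.\ right-continuity needs to be verified only pointwise in $x$ (not uniformly in a subsequence), the $x$-dependent extraction causes no difficulty.
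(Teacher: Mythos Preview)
Your proof is correct and follows essentially the same route as the paper: use that quasi-continuity of each $u_n$ gives right-continuity of $u_n(X)$ on $[0,\zeta)$ q.a.s., pass this through the uniform-in-$t$ hypothesis to $u(X)$, and invoke the probabilistic characterization of quasi-continuity to conclude. The paper compresses your subsequence extraction into one sentence, and for the converse implication it cites \cite[Theorem 4.6.1, Theorem A.2.7]{FOT} rather than Theorem 4.2.2 (which only gives the forward direction), so your citation should be adjusted accordingly.
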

\begin{proof}
By \cite[Theorem 4.2.2]{FOT} process $u_n(X)$ is right-continuous on $[0,\infty)$ q.a.s.
Therefore, by \eqref{eqj.j3}, $u$ shares this property too. Consequently, by \cite[Theorem 4.6.1, Theorem A.2.7]{FOT},
$u$ is quasi-continuous.
\end{proof}

We say that $\mu\in\MM_\rho$ is a good measure iff there exists a solution to (\ref{eq1.1}).
We  let $\GG(f)$ denote the set of all good measures. It is clear that $\GG(f)$ also depends on $A$.

\begin{theorem}
\label{th4.1}
Assume that there exists a subsolution $\underline u$ to (\ref{eq1.1}).
Let $u^*$ be a maximal subsolution to (\ref{eq1.1}) (cf. Theorem \ref{th3.1}(4)). Set
\begin{equation}
\label{eq.eq.4.0}
\mu^*:= -Au^*-f(\cdot,u^*)
\end{equation}
in the sense that $\mu^*=\mu-\nu$, where $\nu$ comes from the definition of a subsolution to \eqref{eq1.1} applied to $u^*$ (see Definition \ref{def.subsup}).
Then $(\mu^*)_d=\mu_d$,  $\mu^*$ is the largest measure less then $\mu$ such that (\ref{eq1.1})
has a solution with $\mu$ replaced by $\mu^*$, and $u^*$ is a maximal solution to 
\begin{equation}
\label{eq.eq.4.0abc}
-Av=f(\cdot,v)+\mu^*.
\end{equation}
Moreover, for any $n\ge 1$ and strictly positive $\phi\in L^1(E,\rho\cdot m)$,   there exists a  maximal solution $u_n$ to
\begin{equation}
\label{eq.abcd}
-Av=\max\{-n\phi,f\}(\cdot,v)+\mu,
\end{equation}
and  $u_n\searrow u^*\,\,m$-a.e.  Furthermore, for any solution $u$ to (\ref{eq.eq.4.0abc}),
and any reducing sequence $\{\tau_k\}$ for $u$, we have
\begin{equation}
\label{eq.eq.4.1}
\mathbb E_xu(X_{\tau_k})\rightarrow R[(\mu^*)_c](x),\quad \mbox{q.e.}
\end{equation}

\end{theorem}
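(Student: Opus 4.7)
The plan is to construct $u^*$ and $\mu^*$ first, then approximate by the truncations $f_n := \max\{-n\phi, f\}$ and pass to the limit to extract the remaining assertions. I would begin by setting $\psi := R\mu^+$; by Theorem \ref{th3.1}(3), every subsolution to \eqref{eq1.1}, in particular $\underline u$, satisfies $\underline u \le R\mu^+$ $m$-a.e., so Theorem \ref{th3.1}(4) delivers a maximal subsolution $u^*$ to \eqref{eq1.1} with $\underline u \le u^* \le R\mu^+$. By Definition \ref{def.subsup}, a positive $\nu \in \MM_\rho$ exists with $u^* = Rf(\cdot,u^*) + R\mu - R\nu$ q.e., and setting $\mu^* := \mu - \nu$ makes $u^*$ a solution to $-Av = f(\cdot,v) + \mu^*$. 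Every other solution $v$ of the reduced equation is automatically a subsolution to \eqref{eq1.1} with the same defect $\nu$, so $v \le R\mu^+$ by Theorem \ref{th3.1}(3) and $v \le u^*$ by maximality; hence $u^*$ is the maximal solution of \eqref{eq.eq.4.0abc}.

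Next I would construct the approximating sequence $u_n$. The one-sided control afforded by the sign condition is that $f_n(\cdot, R\mu^+) \in [-n\phi, 0]$, so $|f_n(\cdot, R\mu^+)| \le n\phi \in L^1(E; \rho\cdot m)$; this makes $R\mu^+$ a supersolution to $-Au = f_n(\cdot, u) + \mu$, with defect $\mu^- - f_n(\cdot, R\mu^+)\cdot m$ in $\MM_\rho$. Since $f_n \ge f$, $\underline u$ is a subsolution of the $f_n$-equation with defect in $\MM_\rho$ by Int). Theorem \ref{th3.1}(2) produces a maximal solution $u_n$ with $\underline u \le u_n \le R\mu^+$, and Theorem \ref{th3.1}(3) applied to the $f_n$-equation confirms that $u_n$ is the maximal solution without qualification. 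Because $u^*$ is a subsolution of the $f_n$-equation (its defect $\nu + (f_n - f)(\cdot, u^*)\cdot m$ lies in $\MM_\rho$ by Int)) and because $f_{n+1} \le f_n$, maximality yields $u^* \le u_{n+1} \le u_n \le R\mu^+$, so $u_n \searrow \tilde u$ $m$-a.e.\ with $\tilde u \ge u^*$.

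The technical core is the identification $\tilde u = u^*$ together with the extraction of $(\mu^*)_c$ as the limit of the excess measures $\lambda_n := (f_n - f)(\cdot, u_n)\cdot m \ge 0$. Using Lemma \ref{lm.m} I would write $u_n$ and $u^*$ as semimartingales along a common reducing sequence $\{\tau_k\}$, featuring the PCAFs associated to $\mu_d$ (and to $\nu_d$ for $u^*$). Pointwise $f_n(\cdot, u_n)\to f(\cdot, \tilde u)$ $m$-a.e.\ because $\phi > 0$ and $|f(\cdot, u_n)| < \infty$ q.e.; the sign condition splits the state space into $\{u_n \ge 0\}$, where $|f_n(\cdot, u_n)|\le n\phi$, and $\{u_n < 0\}$, where $f_n = f$ with $|f(\cdot, u_n)|$ bounded by $\sup_{y\in[\underline u,0]}|f(\cdot, y)|\in L^1(E; \rho\cdot m)$ (from Int) applied with $\underline u' = \underline u, \overline u' = 0$). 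Passing to the limit in the stochastic decompositions using the narrow-convergence and Fatou-type results \cite[Theorem 3.7, Theorem 6.3]{K:CVPDE}, in the same spirit as the proofs of Propositions \ref{prop3.1}--\ref{prop3.3}, one concludes that $\tilde u$ is again a subsolution of \eqref{eq1.1} lying below $R\mu^+$, so by maximality $\tilde u = u^*$. The same limiting analysis shows $\lambda_n$ converges to $\nu_c$ in the appropriate sense, giving both \eqref{eq.eq.4.1} and $(\mu^*)_d = \mu_d$: indeed, each $\lambda_n$ is absolutely continuous with respect to $m$, so its limit cannot produce a new diffuse component beyond $\mu_d$. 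Maximality of $\mu^*$ among good measures $\le \mu$ follows because any solution $w$ of $-Au = f(\cdot, u) + \sigma$ with $\sigma \le \mu$ is a subsolution of every $f_n$-equation, so $w \le u_n$ for all $n$ and $w \le u^*$; matching the defining representations using $(\mu^*)_d = \mu_d$ yields $\sigma \le \mu^*$. Finally \eqref{eq.eq.4.1} for an arbitrary solution $u$ of $-Au = f(\cdot, u) + \mu^*$ is \cite[Theorem 3.7]{K:CVPDE} applied directly to $u = Rf(\cdot,u) + R\mu^*$.

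The main obstacle is exactly this nonlinear passage to the limit. Without monotonicity of $f$ one cannot use dominated or monotone convergence globally on $f_n(\cdot, u_n)$: the sequence $|f(\cdot, u_n)|$ fails to be uniformly integrable precisely on the singular region of $R\mu^+$, and this failure of uniform integrability is itself the mechanism producing the reduction $\mu \rightsquigarrow \mu^*$. The requisite compactness has to be extracted from the two-sided bound $|f_n| \le \min(|f|, n\phi)$ supplied by the sign condition, combined with the Revuz-duality and Tanaka-Meyer machinery developed in the earlier sections.
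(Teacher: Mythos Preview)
Your overall strategy coincides with the paper's: build the truncated problems with $f_n=f\vee(-n\phi)$, sandwich $\underline u\le u_n\le R\mu^+$, obtain a monotone limit, identify it with the maximal subsolution $u^*$, and read off $\mu^*$. The order is reversed (the paper defines $u:=\inf_n u_n$ first and only afterwards matches it with $u^*$), but that is cosmetic.

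There are, however, two genuine soft spots. First, your maximality argument for $\mu^*$ is incomplete. From $w\le u^*$ for a solution $w$ of $-Aw=f(\cdot,w)+\sigma$ with $\sigma\le\mu$ you cannot conclude $\sigma\le\mu^*$ by ``matching the defining representations''; what is needed is the \emph{inverse maximum principle} \cite[Theorem~6.1]{K:CVPDE}, which gives $\sigma_c\le(\mu^*)_c$ directly from $w\le u^*$. The paper invokes it explicitly, and without it the step does not close. Second, your justification of $(\mu^*)_d=\mu_d$ (``each $\lambda_n$ is absolutely continuous with respect to $m$, so its limit cannot produce a new diffuse component'') is the wrong heuristic: limits of diffuse measures can very well concentrate (that is precisely the reduction phenomenon). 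The paper instead passes to the limit in the \emph{probabilistic} representation $u_n(x)=\mathbb E_xu_n(X_{\tau_{k,j}})+\mathbb E_x\int_0^{\tau_{k,j}}f_n(X_r,u_n(X_r))\,dr+\mathbb E_x\int_0^{\tau_{k,j}}dA^{\mu_d}_r$ along carefully chosen stopping times, so that $A^{\mu_d}$ persists verbatim; the concentrated remainder is then identified by writing $u_n=Rf_n^+(\cdot,u_n)+R\mu^+-Rf_n^-(\cdot,u_n)-R\mu^-$, extracting subsequential limits of the excessive pieces via \cite[Lemma~94]{DellacherieMeyer}, and representing them as potentials $R\beta_i$ via \cite{GetoorGlover}. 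This Dellacherie--Meyer/Getoor--Glover compactness step is the actual engine of the limit passage and should be named; the bound you propose, $|f_n|\le\min(|f|,n\phi)$, is neither correct (it fails where $f>n\phi$) nor the source of the uniform control---that comes from Proposition~\ref{prop3.3}, which yields $\|f_n(\cdot,u_n)\|_{L^1(E;\rho\cdot m)}\le\|\mu\|_{\MM_\rho}$ uniformly in $n$.
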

\begin{proof}
Let $\phi$ be a strictly positive bounded Borel function on $E$
such that $\phi\in L^1(E;\rho\cdot m)$. Set 
\[
f_n(x,y)=f(x,y)\vee (-n\phi),\quad x\in E,\, y\in\BR.
\]
Clearly, $f_n$ satisfies Sig) and $f_n\ge f_{n+1}\ge f,\, n\ge 1$.
Let $w:=R\mu^+$. Observe that
\[
0\le -f_n(\cdot,w)=f^-_n(\cdot,w)\le n\phi.
\]
Thus, $f_n(\cdot,w)\in L^1(E;\rho\cdot m)$. Moreover,
\[
-Aw=f_n(\cdot,w)+\mu+(-f_n(\cdot,w)+\mu^-).
\]
Consequently, $w$ is a supersolution to (\ref{eq.abcd}). Set $\overline u:= w$. 
Since $\underline u$ is a subsolution to (\ref{eq1.1}),
there exists a positive $\nu \in\MM_\rho$ such that
\[
-A\underline u=f(\cdot,\underline u)+\mu-\nu.
\]
Therefore,
\[
-A\underline u=f_n(\cdot,\underline u)+\mu-(f_n(\cdot,\underline u)-f(\cdot,\underline u))-\nu.
\]
Hence, $\underline u$ is a subsolution to (\ref{eq.abcd}). By Theorem \ref{th3.1}(3), $\underline u\le\overline u\,\,m$-a.e.
Consequently, by Theorem \ref{th3.1}, there exists a maximal solution  $u_n$ to \eqref{eq.abcd}
such that $\underline u\le u_n\le\overline u\,\,m$-a.e. 
By Proposition \ref{prop3.4}, $u_n\ge u_{n+1},\, n\ge 1$ q.e. Put $u=\inf_{n\ge 1} u_n$ q.e.
By Proposition \ref{prop3.3}, 
\begin{equation}
\label{eq4.0}
|u_n|+R|f_n(\cdot, u_n)|\le R|\mu|,\quad \mbox{q.e.},\qquad \|f_n(\cdot,u_n)\|_{L^1(E;\rho\cdot m)}\le \|\mu\|_{\MM_\rho}.
\end{equation}
Therefore, by  Fatou's lemma and Car),
\begin{equation}
\label{eq4.01}
|u|+R|f(\cdot, u)|\le R|\mu|,\quad \mbox{q.e.},\qquad \|f(\cdot,u)\|_{L^1(E;\rho\cdot m)}\le \|\mu\|_{\MM_\rho}.
\end{equation}
Let $\{\delta_k\}$ be a common reducing sequence for $\underline u,\overline u$. Let
\[
\sigma_k=\inf\{t\ge 0: |\underline u(X_t)|+|\overline u(X_t)|\ge k\}\wedge k,\quad \hat\sigma_{k,j}=\inf\{t\ge 0: \int_0^t \sup_{|y|\le k}|f(X_r,y)|\,dr\ge j\}.
\]
By the definition of a reducing sequence $\lim_{k\rightarrow \infty}\delta_k\wedge\zeta=\zeta$. Since $\underline u,\overline u$ are quasi-continuous,
$\lim_{k\rightarrow \infty}\sigma_k\wedge\zeta=\zeta$ (see \cite[Theorem 4.2.2]{FOT}). Finally, by qM) and Corollary \ref{cor2.1}, $\lim_{j\rightarrow \infty}\hat \sigma_{k,j}\wedge\zeta=\zeta$. Let $\tau_{k,j}:=\delta_k\wedge\sigma_k\wedge\hat\sigma_{k,j}$.
Observe that 
\begin{equation}
\label{eq.jotk}
\tau_{k,j}\nearrow \tau_k:=\delta_k\wedge\sigma_k,\quad j\to\infty.
\end{equation}
By Lemma \ref{lm.m},
\begin{equation}
\label{eq4.1j1}
u_n(X_{t\wedge\tau_{k,j}})=\mathbb E_x\big[u_n(X_{\tau_{k,j}})+\int_{t\wedge\tau_{k,j}}^{\tau_{k,j}}f_n(X_r,u_n(X_r))\,dr+\int_{t\wedge\tau_{k,j}}^{\tau_{k,j}}\,dA^{\mu_d}_r\big|\FF_{t\wedge\tau_{k,j}}\big]\quad\mbox{q.a.s.}
\end{equation}
By \cite[Lemma 6.1]{BDHPS}, for any $q\in (0,1)$, there exists $c_q>0$ such that 
\begin{align*}
\big(\mathbb E_x\sup_{0\le t\le\tau_{k,j}}|u_n(X_t)-u_l(X_t)|^q\big)^{1/q}\le c_q\mathbb E_x&\big[|u_n(X_{\tau_{k,j}})-u_l(X_{\tau_{k,j}})|\\&
+\int_0^{\tau_{k,j}}|f_n(X_r,u_n(X_r))-f_l(X_r,u_l(X_r))|\,dr\big]\quad\text{q.e}.
\end{align*}
Due to the choice of $\{\tau_{k,j}\}$ and Remark \ref{rem3.1}(c), the right-hand side of the above equation tends to zero as $n,l\to \infty$. 
Consequently, by Lemma \ref{lmj.jlm}, $u$ is quasi-continuous.  Taking $t=0$ in \eqref{eq4.1j1}, we get
\begin{equation}
\label{eq4.1}
u_n(x)=\mathbb E_xu_n(X_{\tau_{k,j}})+\mathbb E_x\int_0^{\tau_{k,j}}f_n(X_r,u_n(X_r))\,dr+\mathbb E_x\int_0^{\tau_{k,j}}\,dA^{\mu_d}_r\quad\mbox{q.e.}
\end{equation}
Letting $n\rightarrow \infty$ and using properties of $\{\tau_{k,j}\}$ and Remark \ref{rem3.1}(c)  yields
\begin{equation}
\label{eq4.2}
u(x)=\mathbb E_xu(X_{\tau_{k,j}})+\mathbb E_x\int_0^{\tau_{k,j}}f(X_r,u(X_r))\,dr+\mathbb E_x\int_0^{\tau_{k,j}}\,dA^{\mu_d}_r\quad\mbox{q.e.}
\end{equation}
Applying \eqref{eq4.01}, \eqref{eq.jotk}  and the fact that $\{\tau_k\}$ is a reducing sequence for $u$,
we find, by letting $j\to\infty$ in \eqref{eq4.2}, that
\begin{equation}
\label{eq4.2prim}
u(x)=\mathbb E_xu(X_{\tau_k})+\mathbb E_x\int_0^{\tau_k}f(X_r,u(X_r))\,dr+\mathbb E_x\int_0^{\tau_k}\,dA^{\mu_d}_r\quad\mbox{q.e.}
\end{equation}
Now, we  shall show that there exists $\beta\in\MM_\rho$ such that $u=R\beta$. For this, observe that
\begin{equation}
\label{eq4.3}
u_n=v_n-w_n\quad q.e.,
\end{equation}
where $v_n=Rf_n^+(\cdot,u_n)+R\mu^+$ and $w_n=Rf_n^-(\cdot,u_n)+R\mu^-$. Clearly, $v_n, w_n$ are excessive functions.
Moreover, by (\ref{eq4.0})
\begin{equation}
\label{eq4.4}
v_n\le 2R|\mu|,\quad w_n\le 2R|\mu|\quad \mbox{q.e.}
\end{equation}
Therefore, by \cite[Lemma 94, page 306]{DellacherieMeyer} there exists a
subsequence (still denoted by $(n)$) and excessive functions $v,w$ such that $v_n\rightarrow v$ and $w_n\rightarrow w\,\,m$-a.e.
By \cite[Proposition 3.9]{GetoorGlover}, there exist positive Borel measures $\beta_1,\beta_2$ such that $v=R\beta_1$, $w=R\beta_2$. By (\ref{eq4.4}) and \cite[Lemma 4.6]{K:CVPDE}, $\beta_1,\beta_2\in\MM_\rho$. Set $\beta=\beta_1-\beta_2$. Due to  (\ref{eq4.3}),
and the fact that $u, R\beta$ are quasi-continuous, see Remark \ref{rem3.1}, we get $u=R\beta$ q.e. Consequently, by \cite[Theorem 3.7]{K:CVPDE}
\[
\mathbb E_x u(X_{\tau_k})\rightarrow R\beta_c(x),\quad\mbox{q.e.}
\]
Therefore, letting $k\rightarrow \infty$ in (\ref{eq4.2prim}) and using (\ref{eq4.01}) yields
\begin{equation}
\label{eq4.5}
u=Rf(\cdot,u)+R\mu_d+R\beta_c,\quad\mbox{q.e.}
\end{equation}
Since $u\le u_n$, we get, by the inverse maximum principle (see \cite[Theorem 6.1]{K:CVPDE}) that $\beta_c\le \mu_c$. 
Observe that
\[
-Au=f(\cdot,u)+\mu-(\mu_c-\beta_c).
\]
Thus, $u$ is a subsolution to (\ref{eq1.1}), which in turn implies that  $u\le u^*$. On the other hand, by Proposition \ref{prop3.4}, $u^*\le u_n,\, n\ge 1$.
Thus $u=u^*$. Consequently, $\mu^*=\mu_d+\beta_c$, and  so $(\mu^*)_d=\mu_d$.  What is left is to show that $\mu^*$ is the maximal measure less then $\mu$
for which there exists a solution to (\ref{eq1.1}) with $\mu$ replaced by $\mu^*$. Let $\gamma\in\MM_\rho$,  $\gamma\le \mu$
and  $v$ be a solution to
\[
-Av=f(\cdot,v)+\gamma.
\]
Since $\gamma\le\mu$, we have that $\gamma_d\le \mu_d$ and $\gamma_c\le \mu_c$. We have already proved that $(\mu^*)_d=\mu_d$.
So that, we only  have to  prove that $\gamma_c\le (\mu^*)_c$. 
Since $\gamma\le \mu$, $v$ is a subsolution to (\ref{eq1.1}). Thus, $v\le u^*$. By the inverse maximum principle $\gamma_c\le (\mu^*)_c$.
\end{proof}

Analogous reasoning, but for supersolutions, leads to the following result.

\begin{theorem}
\label{th4.2}
Assume that there exists a supersolution  $\overline  u$ to (\ref{eq1.1}).
Let $u_*$ be a minimal supersolution to (\ref{eq1.1}). Set
\[
\mu_*:= -Au_*-f(\cdot,u_*).
\]
Then $(\mu_*)_d=\mu_d$, $\mu_*$ is the smallest measure greater  then $\mu$ such that (\ref{eq1.1})
has a solution with $\mu$ replaced by $\mu_*$, and $u_*$ is a minimal solution to 
\begin{equation}
\label{eq.eq.4.0abcmin}
-Av=f(\cdot,v)+\mu_*.
\end{equation}
Moreover, for any $n\ge 1$ and strictly positive $\phi\in L^1(E,\rho\cdot m)$,   there exists a  minimal solution $u_n$ to
\[
-Av=\min\{n\phi,f\}(\cdot,v)+\mu,
\]
and  $u_n\nearrow u_*$.
\end{theorem}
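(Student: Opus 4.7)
The natural approach is to reduce Theorem \ref{th4.2} to Theorem \ref{th4.1} via the sign-reversing involution $u\mapsto -u$, $\mu\mapsto-\mu$, $f\mapsto\tilde f$ with $\tilde f(x,y):=-f(x,-y)$, which interchanges supersolutions and subsolutions and preserves the standing assumptions. Indeed, Car) and qM) for $\tilde f$ are obvious, Sig) follows from $\tilde f(x,y)\,y=f(x,-y)(-y)\le 0$, and Int) for $\tilde f$ reduces to Int) for $f$ via the identity
\[
\sup_{y\in[\underline v(x),\overline v(x)]}|\tilde f(x,y)|=\sup_{z\in[-\overline v(x),-\underline v(x)]}|f(x,z)|.
\]

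Next I would verify the core correspondence: if $u$ is a supersolution to \eqref{eq1.1} with nonnegative defect measure $\nu\in\MM_\rho$, so that $u=Rf(\cdot,u)+R\mu+R\nu$, then $v:=-u$ satisfies
\[
v=-Rf(\cdot,-v)-R\mu-R\nu=R\tilde f(\cdot,v)+R\tilde\mu-R\nu,
\]
i.e.\ $v$ is a subsolution of $-Av=\tilde f(\cdot,v)+\tilde\mu$ with $\tilde\mu:=-\mu$ and the same defect $\nu$. Solutions correspond to solutions in the same way, and the ordering $u_1\le u_2$ is reversed under negation; in particular, a minimal supersolution of \eqref{eq1.1} corresponds to a maximal subsolution of the transformed problem. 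The assumed supersolution $\overline u$ provides the subsolution $-\overline u$ needed to invoke Theorem \ref{th4.1}.

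Now I apply Theorem \ref{th4.1} to $\tilde f,\tilde\mu$: there exist a maximal subsolution $v^*$ and a reduced measure $\tilde\mu^*=-Av^*-\tilde f(\cdot,v^*)\le\tilde\mu$ with $(\tilde\mu^*)_d=\tilde\mu_d$, and $v^*$ is a maximal solution of $-Av=\tilde f(\cdot,v)+\tilde\mu^*$. Set $u_*:=-v^*$ and $\mu_*:=-\tilde\mu^*$. Then $\mu_*\ge\mu$, the identity $\mu_*=-Au_*-f(\cdot,u_*)$ follows by undoing the involution in the definition of $\tilde\mu^*$, and
\[
(\mu_*)_d=-(\tilde\mu^*)_d=-\tilde\mu_d=\mu_d.
\]
Maximality of $v^*$ becomes minimality of $u_*$ among solutions of $-Au=f(\cdot,u)+\mu_*$, and minimality of $\mu_*$ in $\GG_{\ge\mu}(f)$ is likewise the translation of maximality of $\tilde\mu^*$ in $\GG_{\le\tilde\mu}(\tilde f)$, which was supplied by Theorem \ref{th4.1}.

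For the approximation statement, set $\tilde f_n(x,y):=\max\{-n\phi(x),\tilde f(x,y)\}$; Theorem \ref{th4.1} yields maximal solutions $v_n$ to $-Av=\tilde f_n(\cdot,v)+\tilde\mu$ with $v_n\searrow v^*$. The pointwise identity
\[
-\tilde f_n(x,-y)=-\max\{-n\phi(x),-f(x,y)\}=\min\{n\phi(x),f(x,y)\}=:f_n(x,y)
\]
converts this into: $u_n:=-v_n$ is a minimal solution of $-Au=f_n(\cdot,u)+\mu$ and $u_n\nearrow u_*$. The argument is essentially a bookkeeping exercise; I do not anticipate a genuine obstacle, since every tool used in the proof of Theorem \ref{th4.1} (reducing sequences, the Tanaka–Meyer formula, Revuz duality, the inverse maximum principle, and Lemma \ref{lmj.jlm}) is symmetric under the involution $u\mapsto-u$. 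The only mild care needed is to confirm that the approximating truncation in Theorem \ref{th4.1} indeed transports to the truncation appearing in the statement of Theorem \ref{th4.2}, which is the content of the displayed identity above.
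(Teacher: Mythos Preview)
Your reduction via the involution $u\mapsto-u$, $\mu\mapsto-\mu$, $f\mapsto\tilde f$ is correct and is precisely the mechanism the paper has in mind: the paper's own proof consists only of the sentence ``Analogous reasoning, but for supersolutions, leads to the following result,'' and the same involution is invoked explicitly elsewhere in the paper (see the identity $\mu_{*,f}=-(-\mu)^{*,\tilde f}$ in the introduction and in Section~\ref{sec6}). Your verification that the truncation $\max\{-n\phi,\tilde f\}$ transports to $\min\{n\phi,f\}$ is exactly the bookkeeping needed, so the proposal matches the paper's approach.
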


\begin{remark}
\label{rem5.1}
By Theorems \ref{th4.1}, \ref{th4.2} if $\mu\in \GG(f)$, then $u^*$ is a maximal solution (subsolution) to (\ref{eq1.1}) and $u_*$ is a minimal solution (supersolution) to (\ref{eq1.1}).   
\end{remark}

\begin{proposition}
\label{prop.cop}
Let $\mu_1,\mu_2\in\MM_\rho$ and $f_1,f_2$ satisfy Car), Sig), Int) and qM). Assume that $\mu_1\le\mu_2$ and $f_1\le f_2$.
Let $u_1, u_2$ be solutions to
\[
-Av=f_1(\cdot,v)+\mu_1,\quad -Av=f_2(\cdot,v)+\mu_2,
\]
respectively.
\begin{enumerate}
\item[(1)] If $u_2$ is maximal and $f_2(\cdot,h)\in L^1(E;\rho\cdot m)$ for some $h\in L^1(E;\varrho\cdot m)$
that satisfies $h\le u_1\,\,m$-a.e., then $u_1\le u_2\,\,m$-a.e.
\item[(2)] If $u_1$ is minimal and $f_1(\cdot,h)\in L^1(E;\rho\cdot m)$ for some $h\in L^1(E;\varrho\cdot m)$
that satisfies $u_2\le h\,\,m$-a.e., then $u_1\le u_2\,\,m$-a.e.
\end{enumerate}
\end{proposition}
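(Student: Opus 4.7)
I treat (1); part (2) follows by the symmetric substitution $\tilde f_i(x,y) := -f_i(x,-y)$ and $\tilde\mu_i := -\mu_i$, which exchanges minimal with maximal and flips the roles of lower and upper bounds. The plan is to invoke Proposition \ref{prop3.4} with data $f := f_2$, $\mu := \mu_2$, $\tilde f := f_1$, $\tilde\mu := \mu_1$, $\tilde u := u_1$, and $u := u_2$. The pointwise inequalities $\tilde f(\cdot,\tilde u) \le f(\cdot,\tilde u)$ and $\tilde\mu \le \mu$ are immediate from $f_1 \le f_2$ and $\mu_1 \le \mu_2$. Combined with Theorem \ref{th4.1} and Remark \ref{rem5.1}, which identify the maximal solution $u_2$ with a maximal subsolution of the $(f_2,\mu_2)$-equation, one sees that $u_2$ is the maximal solution in any envelope containing it; hence, to apply Proposition \ref{prop3.4}, it suffices to produce sub- and supersolutions $\underline u \le \overline u$ of $-Av = f_2(\cdot,v) + \mu_2$ with $\underline u \le u_1, u_2 \le \overline u$ $m$-a.e.

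The natural choices are $\overline u := R\mu_2^+$ and $\underline u := -R(\mu_1^- + \mu_2^-)$. By Theorem \ref{th3.1}(3) any subsolution---hence $u_1$ and $u_2$---satisfies $u_i \le R\mu_i^+ \le R\mu_2^+$; the identity $-A\overline u = \mu_2^+ = f_2(\cdot,\overline u) + \mu_2 + \bigl[\mu_2^- + (-f_2(\cdot,\overline u))\cdot m\bigr]$ exhibits $\overline u$ as a supersolution, the excess being nonnegative since Sig) forces $f_2(\cdot,R\mu_2^+) \le 0$. For $\underline u$, the supersolution analog of Theorem \ref{th3.1}(3), obtained by applying it to $-u_i$ with the nonlinearity $-f_i(\cdot,-\cdot)$ (which again satisfies Sig)), gives $u_i \ge -R\mu_i^-$, whence $\underline u \le u_1 \wedge u_2$; Sig) then forces $f_2(\cdot,\underline u) \ge 0$, making the subsolution excess $f_2(\cdot,\underline u)\cdot m + \mu_2^+ + \mu_1^-$ nonnegative.

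The main obstacle is to verify that both excess measures belong to $\MM_\rho$, i.e., that $f_2(\cdot,\overline u), f_2(\cdot,\underline u) \in L^1(E;\rho\cdot m)$. Without any monotonicity of $f_2$ in $y$, the integrability of $f_2$ cannot be transferred pointwise from $h$ to $\underline u$ or to $\overline u$. My plan is to use Sig) to fix the sign of $f_2$ on each half-line, reducing each estimate to controlling a one-sided quantity, and then to apply Int) iteratively with endpoints chosen from $\{h \wedge 0, \underline u, 0, u_2, \overline u\}$, seeded by the hypothesis $f_2(\cdot,h) \in L^1(E;\rho\cdot m)$ and by Proposition \ref{prop3.3} applied to $u_2$ (which yields $\|f_2^-(\cdot,u_2)\|_{L^1(E;\rho\cdot m)} < \infty$). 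Once these integrabilities are secured, Proposition \ref{prop3.4} applies directly and delivers $u_1 \le u_2$ $m$-a.e.
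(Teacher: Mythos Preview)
Your plan has a genuine gap: the integrability $f_2(\cdot,\overline u),\,f_2(\cdot,\underline u)\in L^1(E;\rho\cdot m)$ cannot be extracted from Int) and the seeds you list. Condition Int) only lets you interpolate between two levels at which $f_2$ is \emph{already} known to be integrable; it never manufactures integrability at an outer barrier. For $\overline u=R\mu_2^+$ you have no function above $R\mu_2^+$ with integrable $f_2$ to pair it with, and the one-sided control $f_2^-(\cdot,u_2)\in L^1$ from Proposition \ref{prop3.3} says nothing about $f_2(\cdot,R\mu_2^+)$. In fact $\mu_2\in\GG(f_2)$ does not force $\mu_2^+\in\mathcal A(f_2)$: for power-type nonlinearities $f_2(y)=-|y|^{p-1}y$ with $p$ large and $\mu_2$ carrying a concentrated part, $(R\mu_2^+)^p$ is typically not $\rho$-integrable even though $\mu_2$ is good. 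The same issue hits $\underline u=-R(\mu_1^-+\mu_2^-)$: you have no guarantee that $h\wedge 0$ (or any function with integrable $f_2$) lies below $\underline u$, so there is no admissible lower endpoint for Int). Without these integrabilities, $\overline u,\underline u$ are not super/subsolutions in the sense of Definition \ref{def.subsup}, and Proposition \ref{prop3.4} does not apply.

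The paper avoids this altogether. It first treats the special case $f_2(\cdot,u_1)\in L^1(E;\rho\cdot m)$, where $u_1$ is directly a subsolution of the $(f_2,\mu_2)$-equation and Remark \ref{rem5.1} gives $u_1\le u_2$ with no need for an explicit envelope. The general case is then reduced to this one via the truncations $f_i^n:=f_i\vee(-n\phi)$ of Theorem \ref{th4.1}: since $h\le u_1^n\le R\mu_1^+$ and both $f_2^n(\cdot,h)$ and $f_2^n(\cdot,R\mu_1^+)$ lie in $L^1(E;\rho\cdot m)$ (the latter because $-n\phi\le f_2^n(\cdot,R\mu_1^+)\le 0$), Int) now legitimately yields $f_2^n(\cdot,u_1^n)\in L^1$, hence $u_1^n\le u_2^n$, and one passes to the limit. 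The truncation is precisely what creates the missing integrable upper endpoint; your direct approach lacks a substitute for this step.
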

\begin{proof}
The proofs of both results are analogous, so that we only give the  the proof of (1).
 
{\bf Step 1}. Suppose that $f_2(\cdot,u_1)\in L^1(E;\rho\cdot m)$.
Observe that
\[
-Au_1=f_2(\cdot,u_1)+\mu_2+(f_1(\cdot,u_1)-f_2(\cdot,u_1))+(\mu_1-\mu_2).
\]
Therefore, $u_1$ is a subsolution to $-Av=f_2(\cdot,v)+\mu_2$.
By  Remark \ref{rem5.1}, $u_1\le u_2\,\,m$-a.e.

{\bf Step 2}. The general case. By Theorem \ref{th4.1}, there exists a maximal solution  $u_1^*$
to $-Av=f_1(\cdot,v)+\mu_1$. By the same theorem, there exist sequences $\{u_1^n\}$, $\{u_2^n\}$
such that $u_1^n$ is a maximal solution to $-Av=f_1^n(\cdot,v)+\mu_1$, $u_2^n$ is a maximal solution to $-Av=f_2^n(\cdot,v)+\mu_2$,
and $u_1^n\searrow u^*_1$, $u_2^n\searrow u_2\,\,m$-a.e. Here $f_i^n(x,y)=\max\{-n\phi(x),f_i(x,y)\},\, x\in E, y\in\BR$, $i=1,2$,
and $\phi$ is as in Theorem \ref{th4.1}. By  Theorem \ref{th3.1}(3), $h\le  u_1\le u^n_1\le R\mu_1^+\,\,m$-a.e. By the assumptions made on $h$, and by Int),
$f^n_2(\cdot,u_1^n)\in L^1(E;\rho\cdot m)$. By Step 1, $u_1^n\le u_2^n\,\,m$-a.e. Hence, $u_1\le u_2\,\,m$-a.e.
\end{proof}

\section{The class of good measures and  the  reduction operator}
\label{sec6}

In this section we shall investigate the class of good measures. 
Our goal is to   provide some properties of the set   $\GG(f)$ and the mapping $\mu\mapsto\mu^*$.
The main results of this section are Theorem \ref{prop5.3}, in which, by applying some basic properties of the mentioned objects,
we prove an existence result for \eqref{eq1.1}, and Theorems \ref{th23}, \ref{th24}  devoted to continuity of the operator $\mu\mapsto\mu^*$
and built up from  it metric projection onto $\GG(f)$. Proposition \ref{cor5.2}  also deserves attention. It is the first result 
concerning the structure of $\GG(f)$. In the next section we  considerably strengthen this result (Theorem \ref{th7.1}).   

In what follows we set for given $\mu\in\MM_\rho$,
\[
\GG_{\le\mu}(f)=\{\nu\in\GG(f): \nu\le\mu\}\quad \GG_{\ge\mu}(f)=\{\nu\in\GG(f): \nu\ge\mu\}.
\]
Let us note that by Theorems \ref{th4.1},\ref{th4.2}, $\mu^*,\mu_*$ are well defined iff $\GG_{\le\mu}(f)\neq\emptyset, \GG_{\ge\mu}(f)\neq\emptyset$, respectively,
and then
\[
\mu^*=\sup \GG_{\le\mu}(f),\quad \mu_*=\inf \GG_{\ge\mu}(f).
\]
Let $\check\MM_\rho:=\{\mu\in\MM_\rho: \GG_{\le \mu}(f)\neq\emptyset\}$.
We call the mapping 
\[
\check\MM_\rho\ni \mu\longmapsto \mu^*\in\GG(f)
\]
the {\em reduction operator}.

\begin{remark}
Notice that if there exist a positive $g\in L^1(E;\rho\cdot m)$ and $M\le 0$ such that $f(x,y)\le g(x),\, x\in E, y\le M$,
then $\check\MM_\rho=\MM_\rho$. Indeed,  observe that $\underline u:=-R\mu^-$ (resp. $\bar u:=0$) is a subsolution
(resp. supersolution) to 
\[
-Au= f(\cdot,u)-\mu^-.
\]
Therefore, by Theorem \ref{th3.1}, there exists a solution $w$ to the above equation.
Thus, $-\mu^-\in\GG(f)$. Clearly, $-\mu^-\le \mu$. 

\end{remark}

\subsection{Basic properties of good measures and the reduction operator:  application to the existence problem}
\label{sec6.1}


\begin{proposition}
\label{prop5.1}
We have the following.
\begin{enumerate}
\item[(1)] $\MM^0_\rho\subset \GG(f)$.
\item[(2)] Suppose  that $\nu \in\MM_\rho$, $\mu \in\check\MM_\rho$, and $\mu\le \nu$. Then $\mu^*\le\nu^*$.
\item[(3)] If  $\mu,\nu\in\GG(f)$, then  $\mu\vee \nu,\mu\wedge\nu\in \GG(f)$.
\item[(4)] If $\mu$ is a positive measure, 
then $\mu^*\ge 0$.
\end{enumerate}
\end{proposition}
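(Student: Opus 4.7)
I would treat (2) and (3) as quick consequences of the identifications $\mu^*=\max\GG_{\le\mu}(f)$ (Theorem \ref{th4.1}) and $\mu_*=\min\GG_{\ge\mu}(f)$ (Theorem \ref{th4.2}), and derive (4) from (1) via the Lebesgue decomposition with respect to $Cap_A$. The substantive step is (1), the existence statement in the diffuse regime.

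For (2), the inclusion $\GG_{\le\mu}(f)\subseteq\GG_{\le\nu}(f)$ is immediate from $\mu\le\nu$, so the latter set is also nonempty; in particular $\mu^*$ itself is a subsolution-donor for the $\nu$-problem, so Theorem \ref{th4.1} produces $\nu^*=\max\GG_{\le\nu}(f)$, and then $\mu^*\in\GG_{\le\nu}(f)$ gives $\mu^*\le\nu^*$. For (3), fix solutions $u_\mu,u_\nu$ and observe that $u_\mu$ is a subsolution to $-Au=f(\cdot,u)+\mu\vee\nu$, since $\mu\vee\nu-\mu$ is a nonnegative element of $\MM_\rho$. Thus Theorem \ref{th4.1} yields $(\mu\vee\nu)^*=\max\GG_{\le\mu\vee\nu}(f)$; since both $\mu,\nu$ lie in $\GG_{\le\mu\vee\nu}(f)$, one has $(\mu\vee\nu)^*\ge\mu\vee\nu$, and the reverse inequality holds by definition, forcing $(\mu\vee\nu)^*=\mu\vee\nu\in\GG(f)$. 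The case of $\mu\wedge\nu$ is symmetric via Theorem \ref{th4.2}: $u_\mu$ is a supersolution to $-Au=f(\cdot,u)+\mu\wedge\nu$, so $(\mu\wedge\nu)_*=\min\GG_{\ge\mu\wedge\nu}(f)$ exists, is pinched between $\mu\wedge\nu$ and $\mu\wedge\nu$ (as $\mu,\nu\in\GG_{\ge\mu\wedge\nu}(f)$), and hence equals $\mu\wedge\nu$.

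For (1), the main obstacle, one has $\mu_c=0$ by smoothness, so the claim is existence in the purely diffuse regime $\mu\ll Cap_A$; this is precisely the content of \cite{KR:NoDEA2} cited in the introduction, which I would invoke. For a self-contained argument, I would adapt the approximation scheme already used in the proof of Theorem \ref{th4.1}: truncate $f$ to $f_n(x,y):=\max\{-n\phi(x),\min\{n\phi(x),f(x,y)\}\}$ (with $\phi$ as in Theorem \ref{th4.1}), which still satisfies Car) and Sig) and is dominated by the $L^1(E;\rho\cdot m)$ function $n\phi$. Proposition \ref{prop3.2} produces a solution $u_n$ to $-Au=f_n(\cdot,u)+\mu$; Proposition \ref{prop3.3} applied to $u_n$ as both sub- and supersolution yields the uniform bounds $|u_n|\le R|\mu|$ q.e.\ and $\|f_n(\cdot,u_n)\|_{L^1(E;\rho\cdot m)}\le\|\mu\|_{\MM_\rho}$; and passage $n\to\infty$ via the BSDE representation from Lemma \ref{lm.m}, a reducing sequence for $R|\mu|$, and the BDG-type inequality \cite[Lemma 6.1]{BDHPS} gives a q.e.\ pointwise limit $u$, quasi-continuous by Lemma \ref{lmj.jlm}, which Car) and dominated convergence identify as a solution of \eqref{eq1.1}. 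The crucial use of smoothness is that $\mu_c=0$ rules out the appearance of a concentrated defect measure on the right-hand side in the limit identification.

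For (4), if $\mu\ge 0$ then its Lebesgue decomposition with respect to $Cap_A$ satisfies $\mu_d,\mu_c\ge 0$: one may take a $Cap_A$-null Borel carrier $N$ of $\mu_c$ and write $\mu_c=\mathbf 1_N\cdot\mu$, $\mu_d=\mathbf 1_{E\setminus N}\cdot\mu$. In particular $\mu_d\in\MM^0_\rho$ and $\mu_d\le\mu$, so by (1) $\mu_d\in\GG(f)$, hence $\mu_d\in\GG_{\le\mu}(f)$, and therefore $\mu^*\ge\mu_d\ge 0$.
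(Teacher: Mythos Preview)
Your arguments for (2) and (3) match the paper's; for (1) you invoke \cite{KR:NoDEA2}, exactly as the paper does. For (4) the paper argues more directly: $0\in\GG(f)$ by (1) and $0\le\mu$, so $0=0^*\le\mu^*$ by (2). Your route via $\mu_d$ is equally valid and incidentally recovers the sharper bound $\mu^*\ge\mu_d$ already contained in Theorem~\ref{th4.1}.

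The one point that deserves a flag is your optional self-contained sketch for (1). With the two-sided truncation $f_n=\max\{-n\phi,\min\{n\phi,f\}\}$ the family $(f_n)$ is \emph{not} monotone in $n$, and the solutions $u_n$ produced by Proposition~\ref{prop3.2} are merely \emph{some} solutions, not maximal ones; hence there is no reason for $(u_n)$ to be monotone. The Cauchy estimate from \cite[Lemma 6.1]{BDHPS} then does not by itself yield convergence: its right-hand side involves $\mathbb E_x|u_n(X_{\tau})-u_m(X_{\tau})|$ and $\mathbb E_x\int_0^{\tau}|f_n(\cdot,u_n)-f_m(\cdot,u_m)|$, and without prior knowledge that $(u_n)$ converges you cannot make these small. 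In the paper's proof of Theorem~\ref{th4.1} the analogous passage works precisely because the one-sided truncation $f_n=\max\{-n\phi,f\}$ is decreasing and $u_n$ is taken maximal, forcing $u_n\searrow$ via Proposition~\ref{prop3.4}. A repair of your sketch would be either to use one-sided truncation with maximal solutions, or to first extract a subsequential $m$-a.e.\ limit of $(u_n)$ via the Dellacherie--Meyer compactness for excessive functions (as in \eqref{eq4.3}--\eqref{eq4.4}) and only then run the BSDE identification.
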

\begin{proof}
Ad (1). It follows from the existence result proved in  \cite{KR:NoDEA2}. Ad (2). By the assumptions there exist $\mu^*, \nu^*$. 
By Theorem \ref{th4.1}, $\mu^*\le\mu$, and so $\mu^*\le \nu$. By Theorem \ref{th4.1} again, $\mu^*\le \nu^*$. Ad (3).  
Since $\mu$ is a good measure, there exists a solution $u$ to (\ref{eq1.1}). Observe that $u$ is also a subsolution to (\ref{eq1.1}) with $\mu$ replaced by $\mu\vee\nu$. Thus, there exists $(\mu\vee\nu)^*$. By Theorem \ref{th4.1},
$(\mu\vee \nu)^*\le \mu\vee\nu$. On the other hand, by (2)
\[
\mu=\mu^*\le (\mu\vee\nu)^*,\quad \nu=\nu^*\le (\mu\vee\nu)^*.
\]
So that $(\mu\vee\nu)^*=\mu\vee\nu$. Thus, $\mu\vee\nu\in\GG(f)$. Analogous reasoning for the minimum gives that $\mu\wedge\nu\in\GG(f)$. Ad (4). By (1), $0\in\GG(f)$. We assumed that $0\le \mu$. Therefore by (2), $0=0^*\le\mu^*$.
\end{proof}

\begin{proposition}
\label{prop5.12}
Let $\mu,\nu\in\MM_\rho$ and $\mu\wedge \nu\in\check\MM_\rho$. Then $(\mu\wedge \nu)^*=\mu^*\wedge\nu^*$.
\end{proposition}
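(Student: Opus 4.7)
The plan is to prove both inequalities $(\mu\wedge\nu)^*\le\mu^*\wedge\nu^*$ and $\mu^*\wedge\nu^*\le(\mu\wedge\nu)^*$ using only the monotonicity and lattice properties of the reduction operator already collected in Proposition \ref{prop5.1}, together with the characterization $\mu^*=\max\GG_{\le\mu}(f)$ from Theorem \ref{th4.1}.

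\textbf{Step 1: Well-definedness.} Since $\mu\wedge\nu\le\mu$ and $\mu\wedge\nu\le\nu$, any measure in $\GG_{\le\mu\wedge\nu}(f)$ sits below both $\mu$ and $\nu$. As $\mu\wedge\nu\in\check\MM_\rho$ by hypothesis, this set is non-empty, hence $\mu,\nu\in\check\MM_\rho$ as well, and $\mu^*$, $\nu^*$ are defined.

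\textbf{Step 2: The easy inequality.} Applying Proposition \ref{prop5.1}(2) with $\mu\wedge\nu\le\mu$ and with $\mu\wedge\nu\le\nu$ gives
\[
(\mu\wedge\nu)^*\le\mu^*,\qquad (\mu\wedge\nu)^*\le\nu^*,
\]
so $(\mu\wedge\nu)^*\le\mu^*\wedge\nu^*$.

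\textbf{Step 3: The reverse inequality.} By Theorem \ref{th4.1} we have $\mu^*\le\mu$ and $\nu^*\le\nu$, hence $\mu^*\wedge\nu^*\le\mu\wedge\nu$. Moreover $\mu^*,\nu^*\in\GG(f)$, so by the lattice property Proposition \ref{prop5.1}(3), $\mu^*\wedge\nu^*\in\GG(f)$. Thus $\mu^*\wedge\nu^*\in\GG_{\le\mu\wedge\nu}(f)$, and since $(\mu\wedge\nu)^*=\max\GG_{\le\mu\wedge\nu}(f)$ we conclude $\mu^*\wedge\nu^*\le(\mu\wedge\nu)^*$.

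Combining Steps 2 and 3 yields the desired equality. There is no real obstacle here: the proposition is essentially a corollary of the maximality characterization of $\mu^*$ together with the two structural facts (monotonicity of $\mu\mapsto\mu^*$ and closure of $\GG(f)$ under $\wedge$) that were just established. The only point to be careful about is Step 1, making sure that the hypothesis $\mu\wedge\nu\in\check\MM_\rho$ is precisely what is needed to guarantee that both $\mu^*$ and $\nu^*$ exist so that the right-hand side of the identity is meaningful.
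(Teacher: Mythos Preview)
Your proof is correct and follows essentially the same approach as the paper: both directions rely on the monotonicity of the reduction operator (Proposition \ref{prop5.1}(2)), the lattice stability of $\GG(f)$ (Proposition \ref{prop5.1}(3)), and the characterization $\mu^*=\max\GG_{\le\mu}(f)$ from Theorem \ref{th4.1}. The only cosmetic difference is that the paper phrases Step 3 via $(\mu^*\wedge\nu^*)^*\le(\mu\wedge\nu)^*$ using Proposition \ref{prop5.1}(2) rather than invoking the $\max$ characterization explicitly, but this is the same argument.
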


\begin{proof}
First observe that $\mu\wedge \nu\in\check\MM_\rho$ implies that $\mu, \nu\in\check\MM_\rho$. By Theorem \ref{th4.1}, $\mu^*\le\mu, \nu^*\le\nu$. Hence $\mu^*\wedge  \nu^*\le \mu\wedge \nu$. By (2) and (3) of Proposition \ref{prop5.1},
  $\mu^*\wedge  \nu^*\le (\mu\wedge \nu)^*$. On the other hand, by Proposition \ref{prop5.1}(2), $\mu^*\ge (\mu\wedge \nu)^*$ and
  $\nu^*\ge (\mu\wedge \nu)^*$. Thus,  $\mu^*\wedge  \nu^*\ge (\mu\wedge  \nu)^*$.
\end{proof}


\begin{theorem}
\label{prop5.3}
Assume that there exists a subsolution and  a supersolution to (\ref{eq1.1}). Then there exists a solution to (\ref{eq1.1}).
\end{theorem}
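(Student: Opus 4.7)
The strategy is to use the reduced-measure machinery to manufacture, out of the (possibly unordered) pair $(\underline u, \overline u)$, a \emph{new} subsolution of \eqref{eq1.1} which automatically lies below $\overline u$, and then close with Theorem \ref{th3.1}(2). So the role of the hypothesized subsolution $\underline u$ is only to guarantee $\GG_{\le\mu}(f)\neq\emptyset$, after which we discard $\underline u$ and build a more useful substitute.

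Concretely: since the subsolution identity $\underline u = Rf(\cdot,\underline u) + R\mu - R\nu_1$ (with $\nu_1\ge 0$ in $\MM_\rho$) exhibits $\mu-\nu_1\in \GG_{\le\mu}(f)$, Theorem \ref{th4.1} applies and produces the maximal element $\mu^*\in\GG_{\le\mu}(f)$; in particular $\mu^*\le\mu$ and $\mu^*\in\GG(f)$. Because $\mu^*\le\mu$, the supersolution identity $\overline u = Rf(\cdot,\overline u) + R\mu + R\nu_2$ (with $\nu_2\ge 0$ in $\MM_\rho$) rewrites as
\[
\overline u = Rf(\cdot,\overline u) + R\mu^* + R\bigl((\mu-\mu^*)+\nu_2\bigr),
\]
which exhibits $\overline u$ as a supersolution of the auxiliary equation $-Av = f(\cdot,v) + \mu^*$. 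I then apply Theorem \ref{th4.2} to this auxiliary equation: it yields a minimal supersolution $u_*$ with $u_* \le \overline u\,\,m$-a.e. The associated measure $(\mu^*)_*$ is by definition the smallest good measure $\ge \mu^*$; but $\mu^*$ is itself good, so $(\mu^*)_* = \mu^*$, and consequently Theorem \ref{th4.2} ensures $u_*$ is actually a \emph{solution} of $-Av=f(\cdot,v)+\mu^*$.

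To conclude, $u_*$ is a subsolution of \eqref{eq1.1}: writing $u_* = Rf(\cdot,u_*) + R\mu^* = Rf(\cdot,u_*) + R\mu - R(\mu-\mu^*)$ with $\mu-\mu^*\ge 0$ in $\MM_\rho$ fits Definition \ref{def.subsup}. Now $u_*$ and $\overline u$ form a subsolution/supersolution pair for \eqref{eq1.1} satisfying $u_* \le \overline u\,\,m$-a.e., and Theorem \ref{th3.1}(2) delivers a maximal solution $w$ of \eqref{eq1.1} with $u_* \le w \le \overline u\,\,m$-a.e. The main delicate point is the middle step: one must verify that $\overline u$ remains a supersolution after replacing $\mu$ by $\mu^*$, and that the minimal supersolution of the auxiliary problem is in fact a solution; both are forced by the combination $\mu^*\in\GG(f)$ and $\mu^*\le\mu$, so no genuine compactness or convergence argument is required beyond what Theorems \ref{th4.1} and \ref{th4.2} already supply.
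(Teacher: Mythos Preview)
Your argument is correct and uses the same core idea as the paper—bootstrapping via reduced measures to produce an \emph{ordered} sub/supersolution pair and then closing with Theorem~\ref{th3.1}(2)—but the implementation differs. The paper constructs \emph{both} reduced measures $\mu^*\le\mu\le\mu_*$, takes the maximal solution $u^*$ at $\mu^*$ and the maximal solution $\overline w$ at $\mu_*$, and invokes Proposition~\ref{prop.cop} to obtain $u^*\le\overline w$; this ordered pair then feeds into Theorem~\ref{th3.1}(2). Your route is more economical: you construct only $\mu^*$, observe that the given supersolution $\overline u$ is automatically a supersolution for the auxiliary problem $-Av=f(\cdot,v)+\mu^*$, and apply Theorem~\ref{th4.2} to that auxiliary problem; since $\mu^*$ is already good, $(\mu^*)_*=\mu^*$ and the resulting minimal supersolution $u_*$ is a genuine solution at $\mu^*$, hence a subsolution of \eqref{eq1.1}, and by minimality $u_*\le\overline u$. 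This bypasses Proposition~\ref{prop.cop} entirely and never requires constructing $\mu_*$. Both approaches rely on the same machinery (Theorems~\ref{th4.1}, \ref{th4.2}, \ref{th3.1}(2)), so neither is substantially more elementary, but yours is slightly leaner and keeps the original $\overline u$ in play rather than replacing it.
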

\begin{proof}
Thanks to the  assumptions made, there exist a subsolution $\underline u$ and a supersolution $\overline u$ to \eqref{eq1.1}.
Therefore, according to the definition of these objects, there exist two positive measures $\nu_1,\nu_2\in\MM_\rho$
such that 
\[
-A\underline u=f(\cdot,\underline u)+\mu-\nu_1,\quad -A\overline u=f(\cdot,\overline u)+\mu+\nu_2.
\]
In particular, $\mu-\nu_1\in\GG_{\le\mu}(f)$, $\mu+\nu_2\in\GG_{\ge\mu}(f)$. Thus, there exist $\mu_*, \mu^*$. 
Obviousely, $\mu_*,\mu^*\in\GG(f)$.  Let $\overline w$ be a maximal solution to 
$-Av=f(\cdot,v)+\mu_*$ (see Remark \ref{rem5.1}). By Theorems \ref{th4.1}, \ref{th4.2}, $\mu^*\le\mu\le\mu_*$. Therefore, by Proposition \ref{prop.cop}, $u^*\le \overline w$. Observe that $u_*$ is a subsolution to (\ref{eq1.1}) and $\overline w$ is a supersolution to (\ref{eq1.1}).
By Theorem  \ref{th3.1}, there exists a solution to (\ref{eq1.1}).
\end{proof}

As a corollary to the above result, we obtain the following useful properties of the set $\GG(f)$.

\begin{proposition}
\label{prop5.5}
Assume that $\mu_1,\mu_2\in \GG(f)$, $\mu\in\MM_\rho$ and $\mu_1\le\mu\le\mu_2$.
Then $\mu\in \GG(f)$.
\end{proposition}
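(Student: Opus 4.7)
The plan is to reduce the statement to Theorem \ref{prop5.3}: it suffices to exhibit a subsolution and a supersolution to \eqref{eq1.1} with data $\mu$, and the two natural candidates are the solutions associated with $\mu_1$ and $\mu_2$.

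First, since $\mu_1,\mu_2\in\GG(f)$, by definition there exist measurable functions $u_1,u_2$ with $f(\cdot,u_i)\in L^1(E;\rho\cdot m)$ satisfying
\[
u_i(x)=\int_E G(x,y)f(y,u_i(y))\,m(dy)+\int_E G(x,y)\,\mu_i(dy),\quad m\text{-a.e.},\quad i=1,2.
\]
Next, I would set $\nu_1:=\mu-\mu_1$ and $\nu_2:=\mu_2-\mu$. Both are positive Borel measures, and since $\mu,\mu_1,\mu_2\in\MM_\rho$, both belong to $\MM_\rho$. Substituting $\mu_1=\mu-\nu_1$ in the representation of $u_1$ and $\mu_2=\mu+\nu_2$ in that of $u_2$, we obtain
\[
u_1=Rf(\cdot,u_1)+R\mu-R\nu_1,\qquad u_2=Rf(\cdot,u_2)+R\mu+R\nu_2\quad m\text{-a.e.},
\]
which matches Definition \ref{def.subsup} exactly. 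Thus $u_1$ is a subsolution and $u_2$ a supersolution to \eqref{eq1.1}.

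Finally, an application of Theorem \ref{prop5.3} yields the existence of a solution to \eqref{eq1.1}, which means $\mu\in\GG(f)$. There is essentially no obstacle: the required $L^1(E;\rho\cdot m)$ integrability of $f(\cdot,u_i)$ comes for free from the fact that $u_i$ are solutions, and the positivity of $\nu_1,\nu_2$ and their membership in $\MM_\rho$ follow directly from the chain of inequalities $\mu_1\le\mu\le\mu_2$. Note that we crucially do not need $u_1\le u_2$ here, which is exactly why invoking Theorem \ref{prop5.3} (rather than Theorem \ref{th3.1}(2)) is the right move.
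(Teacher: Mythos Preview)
Your proof is correct and follows exactly the same approach as the paper: take solutions $u_1,u_2$ associated with $\mu_1,\mu_2$, observe that they are respectively a subsolution and a supersolution to \eqref{eq1.1} with data $\mu$ (via the positive measures $\nu_1=\mu-\mu_1$ and $\nu_2=\mu_2-\mu$), and apply Theorem \ref{prop5.3}. Your write-up simply spells out in more detail what the paper states in two lines, including the useful remark that Theorem \ref{prop5.3} is preferred over Theorem \ref{th3.1}(2) precisely because no ordering $u_1\le u_2$ is required.
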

\begin{proof}
Since $\mu_1,\mu_2\in \GG(f)$. There exists a solution $u_1$ to (\ref{eq1.1}) with $\mu$ replaced by $\mu_1$,
and a solution $u_2$ to (\ref{eq1.1}) with $\mu$ replaced by $\mu_2$. Since $\mu_1\le\mu\le\mu_2$, $u_1$ is a subsolution to (\ref{eq1.1})
and $u_2$ is a supersolution to (\ref{eq1.1}).  By Theorem \ref{prop5.3}, there exists a solution to (\ref{eq1.1}). So, $\mu\in\GG(f)$.
\end{proof}



\begin{corollary}
\label{cor5.6}
Let $\mu\in\check\MM_\rho$. Then $\mu\in \GG(f)$ if and only if $\mu^+\in\GG(f)$.
\end{corollary}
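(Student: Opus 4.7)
The plan is to reduce both directions to results already established: Proposition~\ref{prop5.1} handles the forward implication via the lattice property of $\GG(f)$, while Theorem~\ref{prop5.3} handles the reverse via the sub/supersolution method.

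For the forward direction, assume $\mu\in\GG(f)$. Note that $\mu^+=\mu\vee 0$ in the sense of signed measures (decomposing on a Hahn set for $\mu$). By Proposition~\ref{prop5.1}(1), the zero measure lies in $\MM_\rho^0\subset\GG(f)$, and by Proposition~\ref{prop5.1}(3), the set $\GG(f)$ is closed under finite maxima. Therefore $\mu^+=\mu\vee 0\in\GG(f)$.

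For the reverse direction, assume $\mu\in\check\MM_\rho$ and $\mu^+\in\GG(f)$. The hypothesis $\mu\in\check\MM_\rho$ means $\GG_{\le\mu}(f)\neq\emptyset$, so there exists $\nu\in\GG(f)$ with $\nu\le\mu$, and a corresponding solution $v$ to $-Av=f(\cdot,v)+\nu$. Writing $v=Rf(\cdot,v)+R\nu=Rf(\cdot,v)+R\mu-R(\mu-\nu)$ with $\mu-\nu\ge 0$ shows, by Definition~\ref{def.subsup}, that $v$ is a subsolution to \eqref{eq1.1}. Symmetrically, since $\mu^+\in\GG(f)$, there exists a solution $w$ to $-Aw=f(\cdot,w)+\mu^+$; rewriting $w=Rf(\cdot,w)+R\mu+R\mu^-$ with $\mu^-\ge 0$ exhibits $w$ as a supersolution to \eqref{eq1.1}.

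With a subsolution and a supersolution to \eqref{eq1.1} in hand (not necessarily ordered, but Theorem~\ref{prop5.3} does not require ordering), we apply Theorem~\ref{prop5.3} to conclude that \eqref{eq1.1} has a solution, i.e.\ $\mu\in\GG(f)$. There is no real obstacle here: the only subtle points are identifying $\mu^+$ with $\mu\vee 0$ in the forward direction and rewriting the data to exhibit the requisite sub/supersolutions in the reverse direction; both are immediate from the definitions.
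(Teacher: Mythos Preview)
Your proof is correct and follows essentially the same route as the paper. The only difference is that for the reverse direction the paper invokes Proposition~\ref{prop5.5} (the ``sandwich'' result $\nu\le\mu\le\mu^+$ with $\nu,\mu^+\in\GG(f)$ forces $\mu\in\GG(f)$), whereas you inline the proof of that proposition by exhibiting the sub- and supersolution explicitly and calling Theorem~\ref{prop5.3} directly; the content is identical.
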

\begin{proof}
If $\mu\in\GG(f)$, then by Proposition \ref{prop5.1}.(3), $\mu^+\in\GG(f)$. Suppose that  $\mu^+\in \GG(f)$. We have $\mu\le\mu^+$.
At the same time, since $\mu\in\check\MM_\rho$, there exists $\nu\in \GG(f)$ such that $\nu\le\mu$.
Therefore, by Proposition \ref{prop5.5}, $\mu\in \GG(f)$.
\end{proof}

\begin{proposition}
\label{prop5.6}
We have $\GG(f)+\MM_{\rho}^0\subset \GG(f)$.
\end{proposition}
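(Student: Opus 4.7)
Write $\lambda=\lambda^+-\lambda^-$; both Jordan components belong to $\MM_\rho^0$ (they inherit absolute continuity w.r.t.\ $Cap$ from $|\lambda|$, and the generalized nest for $|\lambda|$ works for $\lambda^\pm$), and since $\lambda\ll Cap$ we have $(\lambda^+)_c=(\lambda^-)_c=0$. I plan to obtain $\mu+\lambda\in\GG(f)$ in two symmetric steps using Theorems \ref{th4.1} and \ref{th4.2}, together with the (standard) linearity and monotonicity of the $Cap$-decomposition $\nu\mapsto(\nu_d,\nu_c)$: the decomposition is unique because $\nu_d\ll Cap$ while $\nu_c\perp Cap$, and if $\nu_1\le\nu_2$ then, applying decomposition to the positive measure $\nu_2-\nu_1$, one gets $(\nu_1)_d\le(\nu_2)_d$ and $(\nu_1)_c\le(\nu_2)_c$.

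\textbf{Step 1: $\mu+\lambda^+\in\GG(f)$.} Let $u_0$ be a solution of $-Av=f(\cdot,v)+\mu$. Writing $-Au_0=f(\cdot,u_0)+(\mu+\lambda^+)-\lambda^+$ with $\lambda^+\ge 0$, $\lambda^+\in\MM_\rho$, shows $u_0$ is a subsolution of $-Av=f(\cdot,v)+\mu+\lambda^+$. Theorem \ref{th4.1} thus produces a reduced measure $(\mu+\lambda^+)^{*}=\max\GG_{\le\mu+\lambda^+}(f)$ with $((\mu+\lambda^+)^{*})_d=(\mu+\lambda^+)_d=\mu_d+\lambda^+$. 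Since $\mu\in\GG_{\le\mu+\lambda^+}(f)$, maximality gives $(\mu+\lambda^+)^{*}\ge\mu$, and by monotonicity of the decomposition $((\mu+\lambda^+)^{*})_c\ge\mu_c$. On the other hand $(\mu+\lambda^+)^{*}\le\mu+\lambda^+$ yields $((\mu+\lambda^+)^{*})_c\le(\mu+\lambda^+)_c=\mu_c$, using $(\lambda^+)_c=0$. Matching both the $d$- and $c$-parts forces $(\mu+\lambda^+)^{*}=\mu+\lambda^+$, so $\mu+\lambda^+\in\GG(f)$.

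\textbf{Step 2: dual argument.} Let $w$ be a solution of $-Av=f(\cdot,v)+\mu+\lambda^+$ (which exists by Step 1). Then $-Aw=f(\cdot,w)+(\mu+\lambda)+\lambda^-$ with $\lambda^-\ge 0$, $\lambda^-\in\MM_\rho$, so $w$ is a supersolution of $-Av=f(\cdot,v)+\mu+\lambda$. By Theorem \ref{th4.2} the measure $(\mu+\lambda)_{*}=\min\GG_{\ge\mu+\lambda}(f)$ exists, with $((\mu+\lambda)_{*})_d=(\mu+\lambda)_d=\mu_d+\lambda$. The sandwich $\mu+\lambda\le(\mu+\lambda)_{*}\le\mu+\lambda^+$, together with $(\mu+\lambda)_c=\mu_c=(\mu+\lambda^+)_c$, pins $((\mu+\lambda)_{*})_c=\mu_c$; hence $(\mu+\lambda)_{*}=\mu+\lambda$ and $\mu+\lambda\in\GG(f)$.

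\textbf{Main obstacle.} There is no real obstacle once Theorems \ref{th4.1}--\ref{th4.2} are available: at no point do we need to construct subsolutions or supersolutions by hand, the existing solutions for $\mu$ and, after Step 1, for $\mu+\lambda^+$ serve directly. The one genuinely used ingredient besides those theorems is the elementary linearity/monotonicity of the $Cap$-decomposition, which kills the concentrated parts of $\lambda^\pm$ and drives the squeezing argument on the $c$-parts.
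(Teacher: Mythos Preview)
Your proof is correct. Both steps work exactly as you describe: Theorem \ref{th4.1} pins the diffuse part of the reduced measure, the inclusion $\mu\in\GG_{\le\mu+\lambda^+}(f)$ together with $(\mu+\lambda^+)^*\le\mu+\lambda^+$ squeezes the concentrated part, and the dual argument via Theorem \ref{th4.2} handles the subtraction of $\lambda^-$.

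The paper takes a different route. Rather than appeal to the reduced-measure machinery, it proves the contrapositive-style statement ``$\mu+\nu\in\GG(f)$, $\nu\in\MM_\rho^0$ $\Rightarrow$ $\mu\in\GG(f)$'' by a direct sandwich: it sets $\overline\sigma=\max\{\mu+\nu,\mu_d\}$ and $\underline\sigma=\min\{\mu+\nu,\mu_d\}$, observes that both belong to $\GG(f)$ by the lattice property Proposition \ref{prop5.1}(3) (since $\mu_d\in\MM_\rho^0\subset\GG(f)$), checks by hand that $\underline\sigma\le\mu\le\overline\sigma$, and then invokes the order-interval property Proposition \ref{prop5.5}. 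Your argument trades Propositions \ref{prop5.1}(3) and \ref{prop5.5} for the heavier Theorems \ref{th4.1}--\ref{th4.2}; the paper's version is more self-contained within Section \ref{sec6}, while yours shows that the result is already implicit in the structural statement $(\mu^*)_d=\mu_d$ from Section \ref{sec5}.
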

\begin{proof}
Let $\gamma\in\GG(f)$ and $\beta\in\MM_{\rho}^0$.  Write
\[
\gamma=(\gamma+\beta)+(-\beta).
\]
Once we  show that for any $\mu\in\MM_\rho, \nu\in \MM^0_{\rho}$ such that $\mu+\nu\in \GG(f)$ we have that $\mu\in \GG(f)$, then
we conclude the result by taking $\mu=\gamma+\beta, \nu=-\beta$. So, let  $\mu\in\MM_\rho, \nu\in \MM^0_{\rho}$ and $\mu+\nu\in \GG(f)$. Set
\[
\overline \sigma =\max\{\mu+\nu,\mu_d\},\quad \underline\sigma =\min\{\mu+\nu,\mu_d\}.
\]
Since $\mu+\nu\in \GG(f)$, we get by Proposition \ref{prop5.1} (1),(3) that $\underline \sigma, \overline \sigma\in\GG(f)$.
Observe that
\[
\overline \sigma=\mu_d+\max\{\mu_c+\nu,0\}=\mu_d+(\mu_c+\nu)^+=\mu_d+\mu^+_c+\nu^+\ge \mu,
\]
and
\[
\underline \sigma=\mu_d+\min\{\mu_c+\nu,0\}=\mu_d-(\mu_c+\nu)^-=\mu_d-\mu^-_c-\nu^-\le \mu.
\]
Thus, $\underline\sigma\le \mu\le\overline\sigma$.  By Proposition \ref{prop5.5}, $\mu\in\GG(f)$.
\end{proof}

\begin{corollary}
\label{cor5.1}
We have that $\mu\in \GG(f)$ if and only if $\mu_c\in\GG(f)$.
\end{corollary}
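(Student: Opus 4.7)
The plan is to derive this corollary almost immediately from Proposition \ref{prop5.6}, which states $\GG(f)+\MM^0_\rho\subset \GG(f)$. The key preliminary observation, already recorded near the start of Section \ref{sec4}, is that for any $\mu\in\MM_\rho$ the absolutely continuous part $\mu_d$ belongs to $\MM^0_\rho$ (it is absolutely continuous with respect to $Cap$ by definition, and smoothness was verified via the quasi-continuous function $\eta=R_1(\rho\wedge g)$ with $\int_E \eta\,d|\mu_d|<\infty$). In particular, $\mu_d\in\MM^0_\rho$ and hence $-\mu_d\in\MM^0_\rho$ as well.

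For the forward implication, I would assume $\mu\in\GG(f)$ and write the decomposition backwards:
\[
\mu_c=\mu+(-\mu_d).
\]
Since $\mu\in\GG(f)$ and $-\mu_d\in\MM^0_\rho$, Proposition \ref{prop5.6} gives $\mu_c\in\GG(f)$.

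For the converse, I would assume $\mu_c\in\GG(f)$ and write
\[
\mu=\mu_c+\mu_d,
\]
with $\mu_d\in\MM^0_\rho$; Proposition \ref{prop5.6} then yields $\mu\in\GG(f)$.

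No real obstacle arises here — everything reduces to the already established stability of $\GG(f)$ under addition of measures in $\MM^0_\rho$ together with the fact that $\mu_d\in\MM^0_\rho$ for every $\mu\in\MM_\rho$. The only thing worth flagging is that Proposition \ref{prop5.6} is stated as $\GG(f)+\MM^0_\rho\subset \GG(f)$, but its proof actually shows the equivalence that if $\mu+\nu\in\GG(f)$ and $\nu\in\MM^0_\rho$, then $\mu\in\GG(f)$; so addition of a smooth $\MM_\rho$-measure neither creates nor destroys membership in $\GG(f)$, which is exactly what both directions of the corollary require.
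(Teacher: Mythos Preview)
Your proof is correct and matches the paper's intended argument: the corollary is stated there without proof, as an immediate consequence of Proposition~\ref{prop5.6} together with the fact (established at the beginning of Section~\ref{sec4}) that $\mu_d\in\MM^0_\rho$ for every $\mu\in\MM_\rho$. Your final remark about the two-sided nature of Proposition~\ref{prop5.6} is unnecessary, since $\MM^0_\rho$ is closed under negation and the one-sided inclusion $\GG(f)+\MM^0_\rho\subset\GG(f)$ already handles both directions.
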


\subsection{Further properties of the reduction operator and good measures - the class of admissible measures}
\label{sec6.2}

We let 
\[
\mathcal A(f)=\{\mu\in\MM_{\rho}: f(\cdot,R\mu)\in L^1(E;\rho\cdot m)\}.
\]
Elements of $\mathcal A(f)$ shall be called {\em admissible measures}.

\begin{proposition}
\label{cor5.2}
 We have  $\mathcal A(f)+L^1(E;\rho\cdot m)= \GG(f)$.
\end{proposition}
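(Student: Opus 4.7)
The plan is to prove both inclusions separately, with the following core observation playing the central role: if $\mu \in \mathcal{A}(f)$ and we set $u := R\mu$, then trivially
\[
u = R\mu = Rf(\cdot, u) + R\bigl(\mu - f(\cdot, u)\cdot m\bigr),
\]
which by definition says that $\mu - f(\cdot,u)\cdot m$ is a good measure with solution $u$. Thus admissibility plus the ``correction'' by the $L^1$-density $-f(\cdot,R\mu)$ lands us inside $\GG(f)$. The rest of the argument is bookkeeping with this identity and a single invocation of Proposition~\ref{prop5.6} (which lets us absorb any $L^1$ perturbation into $\GG(f)$).

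For the inclusion $\mathcal{A}(f) + L^1(E;\rho\cdot m)\subset \GG(f)$, I would take $\mu\in\mathcal{A}(f)$ and $h\in L^1(E;\rho\cdot m)$ and write
\[
\mu + h\cdot m = \bigl(\mu - f(\cdot, R\mu)\cdot m\bigr) + \bigl(f(\cdot,R\mu) + h\bigr)\cdot m.
\]
The first summand is in $\GG(f)$ by the boxed observation above. The second summand has an $L^1(E;\rho\cdot m)$ density, hence defines a smooth measure in $\MM_\rho^0$ (absolute continuity w.r.t.\ $m$ implies absolute continuity w.r.t.\ $Cap$, and $\sigma$-finiteness against a nest is automatic). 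Proposition~\ref{prop5.6} then gives $\mu + h\cdot m\in\GG(f)$.

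For the reverse inclusion $\GG(f)\subset \mathcal{A}(f) + L^1(E;\rho\cdot m)$, I would let $\mu\in\GG(f)$ with solution $u$, so that $u = Rf(\cdot,u) + R\mu$ and $f(\cdot,u)\in L^1(E;\rho\cdot m)$. Define $\nu := \mu + f(\cdot,u)\cdot m$. Then $R\nu = R\mu + Rf(\cdot,u) = u$, so $f(\cdot,R\nu) = f(\cdot,u)\in L^1(E;\rho\cdot m)$, showing $\nu\in\mathcal{A}(f)$. The decomposition $\mu = \nu + (-f(\cdot,u))\cdot m$ then exhibits $\mu$ as an element of $\mathcal{A}(f) + L^1(E;\rho\cdot m)$.

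I do not expect any real obstacle here; the only subtlety is notational, namely being careful to distinguish the function $f(\cdot,u)\in L^1(E;\rho\cdot m)$ from the associated measure $f(\cdot,u)\cdot m\in\MM_\rho^0$, and to check that the equality $u=R\nu$ is understood up to a $m$-null (equivalently q.e.) set so that $f(\cdot,R\nu)$ and $f(\cdot,u)$ agree in $L^1(E;\rho\cdot m)$. Once these are in order, the argument reduces to the two short symmetric calculations above.
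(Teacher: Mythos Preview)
Your proof is correct and follows essentially the same approach as the paper. Your reverse inclusion $\GG(f)\subset\mathcal A(f)+L^1(E;\rho\cdot m)$ is verbatim the paper's argument, and for the forward inclusion the paper simply writes ``follows directly from Proposition~\ref{prop5.6}'' --- your explicit observation that $u:=R\mu$ solves the equation with data $\mu-f(\cdot,R\mu)\cdot m$ is exactly the missing sentence that justifies that appeal.
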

\begin{proof}
The inclusion $"\subset"$ follows directly from Proposition \ref{prop5.6}.
Suppose that $\mu\in\GG(f)$. Therefore, there exists a solution $u$ to \eqref{eq1.1}.
Set $\nu:=\mu+f(\cdot,u)$. Then $\nu\in\mathcal A(f)$ since $R\nu=u$ and by the very definition of a solution to
\eqref{eq1.1}, we have $f(\cdot,u)\in L^1(E,\rho\cdot m)$. Thus, $\mu=\nu-f(\cdot,u)\in \mathcal A(f)+L^1(E;\rho\cdot m)$.
\end{proof}

\begin{proposition}
\label{prop5.7}
The set $\GG(f)$ is closed in $(\MM_\rho, \|\cdot\|_{\MM_\rho})$.
\end{proposition}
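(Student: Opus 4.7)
Let $\{\mu_n\}\subset\GG(f)$ with $\mu_n\to\mu$ in $(\MM_\rho,\|\cdot\|_\rho)$; I want to build a solution to \eqref{eq1.1} and conclude that $\mu\in\GG(f)$. The plan is to extract a rapidly converging subsequence of the corresponding solutions $u_n$, obtain uniform bounds via Proposition~\ref{prop3.3}, use an excessive-function compactness argument to pass to an $m$-a.e.\ limit, and finally identify the limit as a solution for $\mu$, using Int) to pass to the limit in the nonlinearity.

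First, I pass to a subsequence, still denoted $\{\mu_n\}$, with $\|\mu_n-\mu\|_\rho\le 2^{-n}$, and let $u_n$ solve $-Au=f(\cdot,u)+\mu_n$. Since $u_n$ is both a subsolution and a supersolution of this equation (with associated measure $\nu=0$), applying Proposition~\ref{prop3.3} in both directions yields
\[
|u_n|\le R|\mu_n|\le R|\mu|+R|\mu_n-\mu|\quad\text{q.e.},\qquad \|f(\cdot,u_n)\|_{L^1(E;\rho\cdot m)}\le\|\mu_n\|_\rho.
\]
Because $\|R|\mu_k-\mu|\|_{L^1(E;\varrho\cdot m)}\le\|\mu_k-\mu\|_\rho\le 2^{-k}$, the envelope $M:=R|\mu|+\sum_{k\ge 1}R|\mu_k-\mu|$ belongs to $L^1(E;\varrho\cdot m)$ and dominates $|u_n|$ for every $n$.

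Next, I write $u_n=Rf^+(\cdot,u_n)-Rf^-(\cdot,u_n)+R\mu_n$. The two sequences $Rf^\pm(\cdot,u_n)$ consist of excessive functions bounded in $L^1(E;\varrho\cdot m)$ uniformly in $n$, so by \cite[Lemma~94, p.~306]{DellacherieMeyer} a further subsequence converges $m$-a.e.\ to excessive functions $h^\pm$. Since $R\mu_n\to R\mu$ in $L^1(E;\varrho\cdot m)$ (and hence $m$-a.e.\ after extraction), one gets $u_n\to u:=h^+-h^-+R\mu$ $m$-a.e., and by Car) also $f^\pm(\cdot,u_n)\to f^\pm(\cdot,u)$ $m$-a.e. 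Assumption Int), combined with the envelope $|u_n|\le M$, then furnishes a dominating function so that $f(\cdot,u_n)\to f(\cdot,u)$ in $L^1(E;\rho\cdot m)$; applying $R$ and comparing with the pointwise limits identifies $h^\pm$ with $Rf^\pm(\cdot,u)$, giving $u=Rf(\cdot,u)+R\mu$ and thus $\mu\in\GG(f)$.

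The crux is the last identification: the sign condition only yields $L^1(E;\rho\cdot m)$-boundedness of $\{f(\cdot,u_n)\}$, not equi-integrability, so some concentration in the limit must be ruled out. The strategy relies on the $L^1(E;\varrho\cdot m)$-envelope $M$ of Step~1 together with Int); if Int) cannot be used with $\pm M$ directly (since one does not know $f(\cdot,\pm M)\in L^1(E;\rho\cdot m)$), the workaround is to cut $M$ at height $k$, apply Int) on the sublevel set $\{M\le k\}$, and complete the concentration-free passage to the limit by the truncation-of-$f$ device from Theorem~\ref{th4.1} applied to the approximants $f_k=\max\{-k\phi,f\}$.
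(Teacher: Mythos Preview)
Your direct PDE-limit approach has a genuine gap precisely where you flag it. Assumption Int) is \emph{conditional}: to dominate $|f(\cdot,u_n)|$ via Int) you would need barriers $\underline u\le u_n\le\overline u$ with $f(\cdot,\underline u),f(\cdot,\overline u)\in L^1(E;\rho\cdot m)$, and $\pm M$ do not qualify. The proposed workaround does not close this. Restricting to $\{M\le k\}$ and invoking Int) with constant barriers $\pm k$ presupposes $\sup_{|y|\le k}|f(\cdot,y)|\in L^1(E;\rho\cdot m)$, i.e.\ condition M), which is strictly stronger than the standing hypothesis qM); and even granting M) you would still lack control on $\{M>k\}$, so equi-integrability of $\{f(\cdot,u_n)\}$ is not established. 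Invoking the truncation $f_k=\max\{-k\phi,f\}$ from Theorem~\ref{th4.1} produces, in the limit $k\to\infty$, the maximal solution for the \emph{reduced} measure $\mu^{*}$, not for $\mu$, so it assumes what is to be proved. Without equi-integrability the excessive limits $h^\pm$ may strictly exceed $Rf^\pm(\cdot,u)$ by potentials of nontrivial measures $\beta^\pm$, and then $u$ solves $-Au=f(\cdot,u)+\mu+\beta^+-\beta^-$; nothing in your argument forces $\beta^+=\beta^-$.

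The paper's proof avoids limits in the PDE entirely and is purely order-theoretic. After passing to a subsequence with $\sum_n\|\mu_{n+1}-\mu_n\|_\rho<\infty$ (setting $\mu_0:=0$), one forms the telescoping envelopes
\[
\underline\sigma:=-\sum_{n\ge0}(\mu_{n+1}-\mu_n)^-\ \le\ \mu_k\ \le\ \sum_{n\ge0}(\mu_{n+1}-\mu_n)^+=:\overline\sigma
\]
for every $k$, hence also $\underline\sigma\le\mu\le\overline\sigma$. Since each $\mu_k\in\GG(f)$, the reductions of $\underline\sigma,\overline\sigma$ exist and furnish good measures bracketing every $\mu_k$, hence $\mu$; Proposition~\ref{prop5.5} (the order-interval property of $\GG(f)$) then gives $\mu\in\GG(f)$ in one line.
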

\begin{proof}
Let $(\mu_n)_{n\ge 1}\subset \GG(f)$.  Let $\mu\in\MM_\rho$ and $\|\mu_n-\mu\|_{\MM_\rho}\rightarrow 0,\, n\rightarrow \infty$.
Set $\mu_0=0$. We may assume that $\sum_{n\ge 1}\|\mu_{n+1}-\mu_{n}\|_{\MM_\rho}<\infty$. Then $\mu=\sum_{n\ge0}(\mu_{n+1}-\mu_n)$,
where the limit is understood in the norm $\|\cdot\|_{\MM_\rho}$. Observe that
\[
\underline\sigma:=-\sum_{n\ge 0}(\mu_{n+1}-\mu_n)^-\le\mu_n\le\sum_{n\ge 0}(\mu_{n+1}-\mu_n)^+=:\overline \sigma
\]
Since $\mu_n\in\GG(f)$, there exist $\underline\sigma^*, \overline \sigma^*$ and by Proposition \ref{prop5.1}(2), $\underline\sigma^*\le\mu_n\le\overline \sigma^*$. Letting $n\rightarrow \infty$ yields $\underline\sigma^*\le\mu\le\overline \sigma^*$.
By Proposition \ref{prop5.5}, $\mu\in\GG(f)$.
\end{proof}

\begin{proposition}
\label{prop5.10}
Assume that  $\mu\in \check\MM_\rho$. Then $|\mu^*|\le |\mu|$.
\end{proposition}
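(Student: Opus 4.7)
The plan is to bound the Jordan positive and negative parts of $\mu^*$ separately by those of $\mu$, and then add them, using $|\nu|=\nu^++\nu^-$.

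For the positive part, since $\mu^*\le\mu\le\mu^+$ (the last inequality being trivial, as $\mu^+-\mu=\mu^-\ge 0$), I would invoke the monotonicity of the Jordan positive part with respect to the order of signed measures: if $\sigma\le\tau$ then
\[
\sigma^+(B)=\sup_{A\subset B}\sigma(A)\le\sup_{A\subset B}\tau(A)=\tau^+(B).
\]
This at once yields $(\mu^*)^+\le(\mu^+)^+=\mu^+$.

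The heart of the argument is to prove $\mu^*\ge -\mu^-$, from which the symmetric monotonicity of the negative part ($\sigma\ge\tau$ forces $\sigma^-\le\tau^-$) gives $(\mu^*)^-\le(-\mu^-)^-=\mu^-$. For this, the essential step is to recognise $-\mu^-$ itself as a good measure. First I would verify that $-\mu^-\in\check\MM_\rho$: picking any $\sigma\in\GG_{\le\mu}(f)$ (which exists by assumption), Proposition \ref{prop5.1}(1),(3) together with $0\in\GG(f)$ ensure $\sigma\wedge 0\in\GG(f)$, and monotonicity of $\wedge$ gives $\sigma\wedge 0\le\mu\wedge 0=-\mu^-$. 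Then, since the positive part of $-\mu^-$ is the zero measure, which is good by Proposition \ref{prop5.1}(1), Corollary \ref{cor5.6} applied to $-\mu^-$ promotes it to a good measure itself. As $-\mu^-\le\mu$, the maximality of $\mu^*$ among good measures below $\mu$ (Theorem \ref{th4.1}) forces $-\mu^-\le\mu^*$.

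Combining the two one-sided inequalities gives $|\mu^*|=(\mu^*)^++(\mu^*)^-\le\mu^++\mu^-=|\mu|$. The main obstacle, as I see it, is recognising that $\mu^*\le\mu$ alone is not enough for a pointwise comparison of total variations; the additional ingredient is Corollary \ref{cor5.6}, whose role is to transfer the (trivial) goodness of the zero measure to $-\mu^-$, without invoking any further growth hypothesis on $f$ such as the one in Remark~6.1.
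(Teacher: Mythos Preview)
Your proof is correct but follows a genuinely different route from the paper's. The paper argues at the level of solutions: it invokes Theorem~\ref{th3.1}(3) to sandwich the maximal subsolution $u^*$ between $-R\mu^-$ and $R\mu^+$, then applies the inverse maximum principle (from \cite{K:CVPDE}) to deduce $-\mu_c^-\le(\mu^*)_c\le\mu_c^+$ for the concentrated parts, and finishes via $(\mu^*)_d=\mu_d$ from Theorem~\ref{th4.1}. Your argument, by contrast, never touches $u^*$ or potentials: it stays entirely within the measure lattice, using Proposition~\ref{prop5.1} and Corollary~\ref{cor5.6} to identify $-\mu^-$ as a good measure dominated by $\mu$, and then reads off $-\mu^-\le\mu^*$ from the maximality clause of Theorem~\ref{th4.1}.

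What each approach buys: the paper's route yields the finer splitting into diffuse and concentrated parts as a by-product, and links the result to the analytic machinery (Tanaka--Meyer, inverse maximum principle) that drives the rest of the paper. Your route is more self-contained within Section~\ref{sec6}, avoids the probabilistic tools altogether, and makes transparent that the inequality $|\mu^*|\le|\mu|$ is a purely order-theoretic consequence of the lattice properties of $\GG(f)$ already established. Your verification that $-\mu^-\in\check\MM_\rho$ via $\sigma\wedge 0$ is exactly what is needed to make Corollary~\ref{cor5.6} applicable, and there is no circularity since that corollary rests only on Propositions~\ref{prop5.1} and~\ref{prop5.5}.
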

\begin{proof}
By Theorem \ref{th3.1}(3), $u^*\le w$, where $w=R\mu^+$, and $v\le u^*$, where $v=-R\mu^-$.
By the inverse maximum principle $-\mu_c^-\le(\mu^*)_c\le \mu^+_c$. Hence $|(\mu^*)_c|\le|\mu_c|$. Since $(\mu^*)_d=\mu_d$,
we get the result. 
\end{proof}

\begin{corollary}
\label{cor5.3}
Assume that $\mu,\nu\in \check\MM_\rho$,   and  $\mu\bot\nu$. Then $\mu^*\bot \nu^*$.
\end{corollary}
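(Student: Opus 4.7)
The plan is to reduce this corollary directly to the total variation domination provided by Proposition \ref{prop5.10}, using the standard characterization of mutual singularity via a carrier set.

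First, since $\mu \perp \nu$, there exists a Borel set $B \subset E$ with $|\mu|(B^{c}) = 0$ and $|\nu|(B) = 0$. Next, because $\mu, \nu \in \check\MM_\rho$, the reduced measures $\mu^{*}$ and $\nu^{*}$ are well-defined elements of $\GG(f)$, and Proposition \ref{prop5.10} yields $|\mu^{*}| \le |\mu|$ and $|\nu^{*}| \le |\nu|$ as measures. Consequently
\[
|\mu^{*}|(B^{c}) \le |\mu|(B^{c}) = 0, \qquad |\nu^{*}|(B) \le |\nu|(B) = 0,
\]
so $\mu^{*}$ is carried by $B$ while $\nu^{*}$ is carried by $B^{c}$, giving $\mu^{*} \perp \nu^{*}$.

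There is no real obstacle here once Proposition \ref{prop5.10} is in hand: the essential content of the statement is that the reduction operator cannot enlarge the support of a measure beyond the support of its input, and total variation domination encodes exactly this. The only thing to be careful about is to apply Proposition \ref{prop5.10} to both $\mu$ and $\nu$ (which is legitimate because both lie in $\check\MM_\rho$ by assumption) and to use the set-theoretic formulation of singularity rather than a decomposition formulation, which would force one to argue separately about the smooth and concentrated parts.
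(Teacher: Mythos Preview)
Your proof is correct and is precisely the argument the paper intends: the corollary is stated immediately after Proposition \ref{prop5.10} with no separate proof, and the implicit reasoning is exactly the total-variation domination $|\mu^*|\le|\mu|$, $|\nu^*|\le|\nu|$ applied to a common carrier set for the singular decomposition.
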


\begin{proposition}
\label{prop5.13}
Let $\mu,\nu\in \check\MM_\rho$, and $\mu\bot\nu$. Then $(\mu+\nu)^*=\mu^*+\nu^*$.
\end{proposition}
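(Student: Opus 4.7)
The plan is to prove both $\mu^*+\nu^*\le(\mu+\nu)^*$ and the reverse inequality, each via the maximality characterization in Theorem~\ref{th4.1}. Since $\mu\bot\nu$, fix disjoint Borel sets $A,B$ with $|\mu|$ supported on $A$ and $|\nu|$ on $B$. Proposition~\ref{prop5.10} then yields that $\mu^*$, $\nu^*$ and $\sigma^*:=(\mu+\nu)^*$ are supported on $A$, $B$ and $A\cup B$ respectively; in particular $\mu^*\bot\nu^*$.

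For the first inequality I would show $\mu^*+\nu^*\in\GG(f)$. By the orthogonality just noted, $(\mu^*+\nu^*)^{\pm}=(\mu^*)^{\pm}+(\nu^*)^{\pm}$. Each of $(\mu^*)^+=\mu^*\vee 0$ and $(\nu^*)^+$ lies in $\GG(f)$ by Proposition~\ref{prop5.1}(3); being positive and mutually singular, their sum equals their lattice join $(\mu^*)^+\vee(\nu^*)^+$, which is again in $\GG(f)$ by Proposition~\ref{prop5.1}(3). Analogously $-(\mu^*)^--(\nu^*)^-=(-(\mu^*)^-)\wedge(-(\nu^*)^-)\in\GG(f)$, giving a good lower bound for $\mu^*+\nu^*$, so $\mu^*+\nu^*\in\check\MM_\rho$. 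Corollary~\ref{cor5.6} then upgrades $(\mu^*+\nu^*)^+\in\GG(f)$ to $\mu^*+\nu^*\in\GG(f)$. Since $\mu^*+\nu^*\le\mu+\nu$, Theorem~\ref{th4.1} gives $\mu^*+\nu^*\le\sigma^*$.

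For the reverse inequality, the idea is to compare $\sigma^*\wedge\mu$ with $\sigma^*\wedge\mu^*$. Pick any good $g\le\mu$ (available since $\mu\in\check\MM_\rho$): then $g\wedge\sigma^*\le\sigma^*\wedge\mu\le\sigma^*$ is a sandwich by good measures, so Proposition~\ref{prop5.5} yields $\sigma^*\wedge\mu\in\GG(f)$. Simultaneously $\sigma^*\wedge\mu\in\check\MM_\rho$ via the same sandwich, so Proposition~\ref{prop5.12} applies and gives $(\sigma^*\wedge\mu)^*=(\sigma^*)^*\wedge\mu^*=\sigma^*\wedge\mu^*$. Because $\sigma^*\wedge\mu$ is itself good, its reduction equals itself, whence $\sigma^*\wedge\mu=\sigma^*\wedge\mu^*$. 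Now restrict this identity to $A$: from $\sigma^*\le\sigma$ and $\sigma|_A=\mu$ one has $\sigma^*|_A\le\mu$, so $(\sigma^*\wedge\mu)|_A=\sigma^*|_A$; and $\mu^*|_A=\mu^*$ (since $\mu^*$ is supported on $A$) gives $(\sigma^*\wedge\mu^*)|_A=\sigma^*|_A\wedge\mu^*$. Equating yields $\sigma^*|_A\le\mu^*$; by symmetry $\sigma^*|_B\le\nu^*$; summing produces $\sigma^*\le\mu^*+\nu^*$.

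The main obstacle is to avoid the (generally untrue) claim that the restriction $\sigma^*|_A$ of a good measure to a Borel set is itself good: a direct splitting of $\sigma^*$ into orthogonal good pieces carried by $A$ and $B$ is not available from the tools developed so far. The trick is to perform the comparison at the level of the \emph{whole} measure via the meet $\wedge$, combining the sandwich principle (Proposition~\ref{prop5.5}) with the $\wedge$-reduction formula (Proposition~\ref{prop5.12}), and only afterwards restrict the resulting equality to $A$ and $B$.
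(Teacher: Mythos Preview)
Your proof is correct, and the first half coincides with the paper's argument. The second half, however, takes a different route.

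The paper argues directly that the restriction $\sigma^*_{\lfloor A}$ (written there as $s_\mu\cdot\mu$ with $s_\mu$ a Radon--Nikodym density) is good: since $\sigma^*_{\lfloor A}=(\sigma^*)^+_{\lfloor A}-(\sigma^*)^-_{\lfloor A}$, one has the sandwich $-(\sigma^*)^-\le\sigma^*_{\lfloor A}\le(\sigma^*)^+$, and both ends lie in $\GG(f)$ by Proposition~\ref{prop5.1}(3) applied to $\sigma^*$ and $0$; Proposition~\ref{prop5.5} then yields $\sigma^*_{\lfloor A}\in\GG(f)$. From $\sigma^*_{\lfloor A}\le\mu$ one gets $\sigma^*_{\lfloor A}\le\mu^*$ immediately. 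So the obstacle you flag (``restriction of a good measure to a Borel set need not be good'') is circumvented in the paper by this very simple sandwich, without invoking Proposition~\ref{prop5.12}.

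Your detour through $\sigma^*\wedge\mu$ and Proposition~\ref{prop5.12} is valid but longer: you first show $\sigma^*\wedge\mu$ is good by a sandwich, then use the $\wedge$-reduction identity, and only then localize to $A$. This buys nothing extra here, though it is a nice illustration of Proposition~\ref{prop5.12}. Note incidentally that the paper's Corollary~\ref{cor5.21} (proved \emph{after} Proposition~\ref{prop5.13}) records exactly the fact $(\sigma_{\lfloor A})^*=\sigma^*_{\lfloor A}$, so your instinct that restriction interacts well with reduction is confirmed, but the paper obtains Proposition~\ref{prop5.13} without it.
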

\begin{proof}
First we show that $(\mu+\nu)^*$ is well defined and $\mu^*+\nu^*\le (\mu+\nu)^*$. Clearly, $\mu^*+\nu^*\le \mu+\nu$.
So, it is enough to prove that $\mu^*+\nu^*\in\GG(f)$ since then $\GG_{\mu+\nu}(f)\neq\emptyset$, and, by Proposition \ref{prop5.1}.(2), $\mu^*+\nu^*\le (\mu+\nu)^*$. By Proposition \ref{prop5.10}, $\mu^*\bot\nu^*$. Thus
\[
\mu^*\vee \nu^*=(\mu^*+\nu^*)^+,\quad \mu^*\wedge  \nu^*=-(\mu^*+\nu^*)^-\le\mu^*+\nu^*\le\mu+\nu.
\]
Therefore, by Proposition \ref{prop5.1}.(3), $\GG_{\mu+\nu}(f)\neq\emptyset$ and $(\mu^*+\nu^*)^+\in\GG(f)$. Hence,  by Corollary \ref{cor5.6}, $\mu^*+\nu^*\in \GG(f)$. 
For the  inequality $\mu^*+\nu^*\ge (\mu+\nu)^*$ observe that
\begin{equation}
\label{eq5.1}
(\mu+\nu)^*=s_\mu\cdot\mu+s_\nu\cdot\nu,
\end{equation}
where 
\[
s_\mu=\frac{d(\mu+\nu)^*}{d(|\mu|+|\nu|)}\cdot \frac{d|\mu|}{d\mu}, \quad s_\nu=\frac{d(\mu+\nu)^*}{d(|\mu|+|\nu|)}\cdot \frac{d|\nu|}{d\nu}.
\]
By Proposition \ref{prop5.10}, $s_\mu, s_\nu$ are well defined and $|s_\mu|\le 1, |s_\nu|\le1$.
Therefore, from  (\ref{eq5.1}) and the fact that $\mu\bot\nu$, we infer that
\[
-[(\mu+\nu)^*]^-\le  s_\mu\cdot\mu \le [(\mu+\nu)^*]^+,\quad   -[(\mu+\nu)^*]^-\le s_\nu\cdot\nu\le [(\mu+\nu)^*]^+.
 \]
By Proposition \ref{prop5.1}(3), $ [(\mu+\nu)^*]^+, -[(\mu+\nu)^*]^-\in\GG(f)$. Therefore, by Proposition \ref{prop5.5}, $s_\mu\cdot\mu,s_\nu\cdot\nu\in\GG(f)$.
By (\ref{eq5.1}), $s_\mu\cdot\mu+s_\nu\cdot\nu\le \mu+\nu$. From this and the fact that $\mu\bot\nu$, we conclude that  $s_\mu\cdot\mu\le \mu$, $s_\nu\cdot\nu \le\nu$. Consequently, since $s_\mu\cdot\mu,s_\nu\cdot\nu\in\GG(f)$, we have $s_\mu\cdot\mu\le \mu^*$, $s_\nu\cdot\nu \le\nu^*$.
This combined with (\ref{eq5.1}) gives $(\mu+\nu)^*\le\mu^*+\nu^*$. 
\end{proof}

\begin{corollary}
\label{cor5.21}
Let $\mu\in\check\MM_\rho$ and $A\in\BB(E)$.  Then $(\mu_{\lfloor A})^*=\mu^*_{\lfloor A}.$
\end{corollary}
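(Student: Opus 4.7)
\textbf{Proof plan for Corollary \ref{cor5.21}.} My plan is to reduce the statement to Proposition \ref{prop5.13}, applied to the orthogonal decomposition $\mu=\mu_{\lfloor A}+\mu_{\lfloor A^c}$. The main preliminary task is to show that both $\mu_{\lfloor A}$ and $\mu_{\lfloor A^c}$ belong to $\check\MM_\rho$; once this is established, Proposition \ref{prop5.13} gives $\mu^*=(\mu_{\lfloor A})^*+(\mu_{\lfloor A^c})^*$, and restricting both sides to $A$ yields the claim provided we know that $(\mu_{\lfloor A})^*$ is concentrated on $A$ (which follows from the absolute bound on the reduction operator).

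First, I would prove the following auxiliary fact: \emph{if $\nu\in\GG(f)$ and $B\in\BB(E)$, then $\nu_{\lfloor B}\in\GG(f)$.} Indeed, since $0\in\GG(f)$ by Proposition \ref{prop5.1}(1), Proposition \ref{prop5.1}(3) gives $\nu^+=\nu\vee 0\in\GG(f)$ and $-\nu^-=\nu\wedge 0\in\GG(f)$. Since $0\le\nu^+_{\lfloor B}\le\nu^+$ and $-\nu^-\le -\nu^-_{\lfloor B}\le 0$, Proposition \ref{prop5.5} yields $\nu^+_{\lfloor B},-\nu^-_{\lfloor B}\in\GG(f)$. These two measures are orthogonal (they are concentrated on disjoint sets), so by Proposition \ref{prop5.13} their sum $\nu_{\lfloor B}=\nu^+_{\lfloor B}-\nu^-_{\lfloor B}$ lies in $\GG(f)$.

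Applying this to $\mu^*\in\GG(f)$, I obtain $\mu^*_{\lfloor A}\in\GG(f)$ and $\mu^*_{\lfloor A^c}\in\GG(f)$. Because $\mu^*\le\mu$ (Theorem \ref{th4.1}), restricting to $A$ and to $A^c$ gives $\mu^*_{\lfloor A}\le\mu_{\lfloor A}$ and $\mu^*_{\lfloor A^c}\le\mu_{\lfloor A^c}$, so both $\mu_{\lfloor A}$ and $\mu_{\lfloor A^c}$ belong to $\check\MM_\rho$. Since $\mu_{\lfloor A}\bot\mu_{\lfloor A^c}$, Proposition \ref{prop5.13} applies and gives
\[
\mu^*=(\mu_{\lfloor A}+\mu_{\lfloor A^c})^*=(\mu_{\lfloor A})^*+(\mu_{\lfloor A^c})^*.
\]

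To finish, I need to see that $(\mu_{\lfloor A})^*$ is concentrated on $A$ and $(\mu_{\lfloor A^c})^*$ is concentrated on $A^c$. This is immediate from Proposition \ref{prop5.10}: $|(\mu_{\lfloor A})^*|\le|\mu_{\lfloor A}|$, whose total variation vanishes on $A^c$, and similarly for the complement. Restricting the displayed identity to $A$ then gives
\[
\mu^*_{\lfloor A}=(\mu_{\lfloor A})^*_{\lfloor A}+(\mu_{\lfloor A^c})^*_{\lfloor A}=(\mu_{\lfloor A})^*+0=(\mu_{\lfloor A})^*,
\]
which is the desired identity. The only subtle step is the auxiliary fact that good measures are stable under restriction to Borel sets; everything else is a clean combination of Propositions \ref{prop5.10} and \ref{prop5.13}.
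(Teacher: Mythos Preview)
Your proof is correct and follows essentially the same route as the paper: verify that $\mu_{\lfloor A},\mu_{\lfloor A^c}\in\check\MM_\rho$, apply Proposition~\ref{prop5.13} to the orthogonal decomposition $\mu=\mu_{\lfloor A}+\mu_{\lfloor A^c}$, and then use Proposition~\ref{prop5.10} to separate the two pieces. The only minor difference is in the auxiliary fact that $\GG(f)$ is stable under restriction: the paper observes directly that $-\nu^-\le\nu_{\lfloor B}\le\nu^+$ and invokes Proposition~\ref{prop5.5} in one step, whereas you first sandwich $\nu^+_{\lfloor B}$ and $-\nu^-_{\lfloor B}$ separately and then recombine via Proposition~\ref{prop5.13}; both arguments are valid, though the paper's is slightly more economical.
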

\begin{proof}
First we show that $\GG_{\mu_{\lfloor A}}(f)\neq\emptyset$. For this it is enough to prove that $\nu\in\GG_{\le\mu}(f)$ implies $\nu_{\lfloor A}\in\GG_{\le \mu_{\lfloor A}}(f)$. But this follows easily  from Proposition \ref{prop5.1}.(3) and Proposition \ref{prop5.5} since $-\nu^-\le\nu_{\lfloor A}\le\nu^+$.
By Proposition \ref{prop5.13},
\[
(\mu^*)_{\lfloor A}+(\mu^*)_{\lfloor A^c}=(\mu_{\lfloor A})^*+(\mu_{\lfloor A^c})^*
\]
Applying Proposition \ref{prop5.10} yields $|(\mu^*)_{\lfloor A}|, |(\mu_{\lfloor A})^*| \le|\mu|_{\lfloor A} $ and 
$|(\mu^*)_{\lfloor A^c}|, |(\mu_{\lfloor A^c})^*| \le|\mu|_{\lfloor A^c} $. From this and the above equality, we get the result.
\end{proof}

\begin{corollary}
Let $\mu,\nu\in\check\MM_\rho$.  Then $(\mu\vee \nu)^*=\mu^*\vee\nu^*$.
\end{corollary}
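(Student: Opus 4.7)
The plan is to reduce the identity, via the Hahn decomposition of $\mu-\nu$, to the orthogonal additivity of the reduction operator (Proposition \ref{prop5.13}) combined with its locality on Borel sets (Corollary \ref{cor5.21}).

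First I would dispose of the easy inclusion $\mu^*\vee\nu^*\le(\mu\vee\nu)^*$: since $\mu^*,\nu^*\in\GG(f)$, Proposition \ref{prop5.1}(3) gives $\mu^*\vee\nu^*\in\GG(f)$, and the elementary bound $\mu^*\vee\nu^*\le\mu\vee\nu$ both shows $\mu\vee\nu\in\check\MM_\rho$ and, via the maximal characterization from Theorem \ref{th4.1}, forces $\mu^*\vee\nu^*\le(\mu\vee\nu)^*$.

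For the reverse inequality I would fix a Hahn decomposition $E=A\sqcup B$ for the signed measure $\mu-\nu$. A direct computation then gives
\[
\mu\vee\nu=\mu_{\lfloor A}+\nu_{\lfloor B},\qquad \mu\wedge\nu=\nu_{\lfloor A}+\mu_{\lfloor B},
\]
with the summands in each decomposition mutually singular, being concentrated on the disjoint sets $A$ and $B$. Before applying Proposition \ref{prop5.13} I need to know that each of the four restricted measures lies in $\check\MM_\rho$; this follows by mimicking the argument from the proof of Corollary \ref{cor5.21}: starting from $\sigma\in\GG(f)$ with $\sigma\le\mu$, the inequalities $-\sigma^-\le\sigma_{\lfloor A}\le\sigma^+$ together with Propositions \ref{prop5.1}(3) and \ref{prop5.5} yield $\sigma_{\lfloor A}\in\GG(f)$, and then $\sigma_{\lfloor A}\le\mu_{\lfloor A}$ places $\mu_{\lfloor A}$ in $\check\MM_\rho$; the remaining three pieces are handled analogously.

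With these ingredients in place, Proposition \ref{prop5.13} and Corollary \ref{cor5.21} applied to each of the two decompositions yield
\[
(\mu\vee\nu)^*=\mu^*_{\lfloor A}+\nu^*_{\lfloor B},\qquad (\mu\wedge\nu)^*=\mu^*_{\lfloor B}+\nu^*_{\lfloor A}.
\]
Proposition \ref{prop5.12} identifies the second right-hand side with $\mu^*\wedge\nu^*$. Combining this with the lattice identity $\mu^*\vee\nu^*=\mu^*+\nu^*-\mu^*\wedge\nu^*$ and the trivial decompositions $\mu^*=\mu^*_{\lfloor A}+\mu^*_{\lfloor B}$, $\nu^*=\nu^*_{\lfloor A}+\nu^*_{\lfloor B}$, I obtain $\mu^*\vee\nu^*=\mu^*_{\lfloor A}+\nu^*_{\lfloor B}=(\mu\vee\nu)^*$. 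The only delicate bookkeeping is the verification that the four restricted pieces lie in $\check\MM_\rho$ so that Proposition \ref{prop5.13} is applicable; the analytic content is supplied entirely by Propositions \ref{prop5.1}, \ref{prop5.5}, \ref{prop5.12}, \ref{prop5.13} and Corollary \ref{cor5.21}.
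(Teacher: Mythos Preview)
Your proof is correct and follows essentially the same route as the paper, which simply refers the reader to the proof of \cite[Theorem 4.9]{BMP}: the Hahn decomposition of $\mu-\nu$ combined with the orthogonal additivity (Proposition \ref{prop5.13}) and locality (Corollary \ref{cor5.21}) of the reduction operator is exactly the mechanism used there. One small simplification: once you have $(\mu\vee\nu)^*=\mu^*_{\lfloor A}+\nu^*_{\lfloor B}$, the detour through Proposition \ref{prop5.12} and the lattice identity is unnecessary, since $\mu^*_{\lfloor A}\le(\mu^*\vee\nu^*)_{\lfloor A}$ and $\nu^*_{\lfloor B}\le(\mu^*\vee\nu^*)_{\lfloor B}$ give the reverse inequality directly.
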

\begin{proof}
It is enough to repeat step by step  the proof of  \cite[Theorem 4.9]{BMP}.
\end{proof}

\begin{corollary}
\label{cor5.6abc}
Let $\mu\in\check\MM_\rho$ and $\nu\in\MM^0_{\rho}$.  Then $(\mu+\nu)^*=\mu^*+\nu$.
\end{corollary}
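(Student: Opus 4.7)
The plan is to obtain the two inequalities $\mu^*+\nu\le (\mu+\nu)^*$ and $(\mu+\nu)^*\le \mu^*+\nu$ by exploiting Proposition~\ref{prop5.6}, which says that $\GG(f)$ is stable under addition of a smooth measure in $\MM^0_\rho$. Note that $\MM^0_\rho$ is closed under negation, so both $\nu$ and $-\nu$ belong to $\MM^0_\rho$.

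First I would check that $\mu+\nu\in\check\MM_\rho$, i.e.\ that $\GG_{\le\mu+\nu}(f)$ is non-empty. Since $\mu\in\check\MM_\rho$, by Theorem~\ref{th4.1} the measure $\mu^*\in\GG(f)$ exists and satisfies $\mu^*\le\mu$. By Proposition~\ref{prop5.6} applied to $\mu^*\in\GG(f)$ and $\nu\in\MM^0_\rho$, we get $\mu^*+\nu\in\GG(f)$; and clearly $\mu^*+\nu\le\mu+\nu$. Hence $\mu^*+\nu\in\GG_{\le\mu+\nu}(f)$, which both shows $\mu+\nu\in\check\MM_\rho$ and, by the maximality characterization of the reduced measure in Theorem~\ref{th4.1}, gives
\[
\mu^*+\nu\le(\mu+\nu)^*.
\]

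For the reverse inequality I would use Proposition~\ref{prop5.6} in the other direction: since $(\mu+\nu)^*\in\GG(f)$ and $-\nu\in\MM^0_\rho$, we get $(\mu+\nu)^*-\nu\in\GG(f)$. Moreover $(\mu+\nu)^*\le\mu+\nu$ by Theorem~\ref{th4.1}, so $(\mu+\nu)^*-\nu\le\mu$, which places $(\mu+\nu)^*-\nu$ in $\GG_{\le\mu}(f)$. Again by the maximality of $\mu^*$, this yields
\[
(\mu+\nu)^*-\nu\le\mu^*,\qquad\text{i.e.}\qquad (\mu+\nu)^*\le\mu^*+\nu.
\]
Combining the two inequalities gives the claimed equality.

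There is essentially no obstacle here; the only thing to be careful about is the order in which one invokes Proposition~\ref{prop5.6} and Theorem~\ref{th4.1}, since one needs to know the relevant reduced measures exist before using their extremal properties. The argument is structurally identical to the well-known fact in the Laplacian setting that reduction commutes with addition of measures which are already good and ``small'' with respect to capacity.
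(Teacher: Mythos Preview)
Your proof is correct and takes a genuinely different route from the paper's. The paper argues via the orthogonal decomposition $\mu+\nu=\mu_c+(\mu_d+\nu)$: since $\mu_c$ is concentrated and $\mu_d+\nu\in\MM^0_\rho$ is diffuse, Proposition~\ref{prop5.13} (additivity of the reduction operator on mutually singular measures) gives $(\mu+\nu)^*=(\mu_c)^*+(\mu_d+\nu)^*=(\mu_c)^*+\mu_d+\nu$, and a second application of Proposition~\ref{prop5.13} to $\mu_c\bot\mu_d$ recombines this into $\mu^*+\nu$. Your argument instead bypasses Proposition~\ref{prop5.13} entirely and works directly from Proposition~\ref{prop5.6} and the extremal characterization of $\mu^*$, obtaining both inequalities by shifting by $\nu$ and $-\nu$. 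This is more elementary (Proposition~\ref{prop5.13} is one of the harder structural results of the section), and it also makes transparent that the statement is really just the invariance of $\GG(f)$ under $\MM^0_\rho$-translations. The paper's approach, on the other hand, fits the corollary into the broader additivity picture and immediately explains why $(\mu^*)_d=\mu_d$ is the relevant fact.
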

\begin{proof}
Observe that by Proposition \ref{prop5.1}.(1), $\GG_{\mu+\nu}(f)\neq \emptyset$. Next, by Proposition \ref{prop5.13} and Proposition  \ref{prop5.1}.(1),
\[
(\mu+\nu)^*=(\mu_c)^*+(\mu_d+\nu)^*=(\mu_c)^*+\mu_d+\nu=(\mu_c)^*+\mu^*_d+\nu=(\mu_c+\mu_d)^*+\nu=\mu^*+\nu.
\]
\end{proof}

\begin{corollary}
Let $\mu\in\check\MM_\rho$.  Then $(\mu_c)^*=(\mu^*)_c$.
\end{corollary}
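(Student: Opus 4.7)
The plan is to exploit the fact that $\mu_c$ and $\mu_d$ are mutually singular as (signed) measures and apply the orthogonal additivity of the reduction operator (Proposition \ref{prop5.13}). First, since $\mu_c\perp Cap$, there is a Borel set $B\subset E$ with $Cap(B)=0$ on which $|\mu_c|$ is concentrated; because $\mu_d\ll Cap$, we have $|\mu_d|(B)=0$, so $\mu_c$ and $\mu_d$ are mutually singular. This is the requisite orthogonality.

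Next, I would verify that both summands lie in $\check\MM_\rho$. For $\mu_d$: it is smooth (as noted in the paragraph right after the definition of Sig), so by Proposition \ref{prop5.1}(1) it is a good measure; in particular $\mu_d\in\check\MM_\rho$ and $(\mu_d)^*=\mu_d$. For $\mu_c$: starting from $\mu\in\check\MM_\rho$ and taking any $\nu\in\GG_{\le\mu}(f)$, the argument in the proof of Corollary \ref{cor5.21} (using Proposition \ref{prop5.1}(3) and Proposition \ref{prop5.5} on $-\nu^-\le\nu_{\lfloor B}\le\nu^+$) shows $\nu_{\lfloor B}\in\GG(f)$ with $\nu_{\lfloor B}\le\mu_{\lfloor B}=\mu_c$, so $\mu_c\in\check\MM_\rho$.

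With mutual singularity and admissibility in hand, Proposition \ref{prop5.13} gives
\[
\mu^*=(\mu_c+\mu_d)^*=(\mu_c)^*+(\mu_d)^*=(\mu_c)^*+\mu_d.
\]
On the other hand, Theorem \ref{th4.1} asserts $(\mu^*)_d=\mu_d$, hence $\mu^*=(\mu^*)_c+\mu_d$. Comparing the two decompositions and subtracting the common term $\mu_d$ yields $(\mu_c)^*=(\mu^*)_c$.

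The only real obstacle is checking that $\mu_c\in\check\MM_\rho$, which requires locating a good measure below $\mu_c$; this is precisely the Borel-restriction observation already embedded in the proof of Corollary \ref{cor5.21}, so it can be reused without repeating the argument. Everything else is a one-line invocation of Proposition \ref{prop5.13}, Proposition \ref{prop5.1}(1), and the identity $(\mu^*)_d=\mu_d$ from Theorem \ref{th4.1}.
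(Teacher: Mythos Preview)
Your proof is correct and follows essentially the same route as the paper: split $\mu=\mu_c+\mu_d$, obtain $\mu^*=(\mu_c)^*+\mu_d$, and compare with $\mu^*=(\mu^*)_c+\mu_d$ using $(\mu^*)_d=\mu_d$ from Theorem~\ref{th4.1}. The only cosmetic differences are that the paper invokes Corollary~\ref{cor5.6abc} (itself a consequence of Proposition~\ref{prop5.13}) rather than Proposition~\ref{prop5.13} directly, and verifies $\mu_c\in\check\MM_\rho$ via Corollary~\ref{cor5.1} (if $\beta\in\GG(f)$ then $\beta_c\in\GG(f)$) rather than the restriction argument you borrowed from the proof of Corollary~\ref{cor5.21}.
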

\begin{proof}
Let $\beta\in \GG_{\le\mu}(f)$. Then, by Corollary \ref{cor5.1}, $\beta_c\in\GG_{\le\mu_c}(f)$. Thus, $\GG_{\le\mu_c}(f)\neq \emptyset$.
By Corollary \ref{cor5.6abc} and Theorem \ref{th4.1},
\[
(\mu^*)_c=\mu^*-(\mu^*)_d=\mu^*-\mu_d=(\mu_c+\mu_d)^*-\mu_d=(\mu_c)^*.
\]
\end{proof}

From now on for every $\mu\in\MM_\rho$ without ambiguity we may write $\mu^*_c$.

\begin{proposition}
\label{prop5.17}
Let $\mu\in\check\MM_\rho$. Then
\[
\mu^*=\mu_d-\mu_c^-+(\mu^{+}_c)^*.
\]
\end{proposition}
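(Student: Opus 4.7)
The plan is to decompose $\mu$ into three pairwise mutually singular pieces and reduce each one separately by exploiting the additivity of the reduction operator on orthogonal measures (Proposition \ref{prop5.13}). Concretely, I would write $\mu = \mu_d + \mu_c^+ + (-\mu_c^-)$. The three pieces sit on disjoint Borel supports: combining the Hahn decomposition of the signed measure $\mu_c$ with a Borel set $N$ of zero capacity carrying $|\mu_c|$ (and on whose complement $|\mu_d|$ vanishes, since $\mu_d \ll Cap$) partitions $E$ into three sets on which $\mu_d$, $\mu_c^+$, $\mu_c^-$ are respectively concentrated. Since $\mu \in \check\MM_\rho$, Corollary \ref{cor5.21} ensures that each of the restrictions $\mu_d$, $\mu_c^+$, $-\mu_c^-$ again lies in $\check\MM_\rho$; for $\mu_d$ this is also immediate from $\mu_d \in \MM^0_\rho \subset \GG(f)$ by Proposition \ref{prop5.1}(1).

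Iterating Proposition \ref{prop5.13} twice — first to separate $\mu_d$ from $\mu_c$, and then to separate $\mu_c^+$ from $-\mu_c^-$ — yields
\[
\mu^* = \mu_d^* + (\mu_c^+)^* + (-\mu_c^-)^*.
\]
The first term satisfies $\mu_d^* = \mu_d$ because $\mu_d \in \GG(f)$ already, so $\mu_d$ itself is the maximum of $\GG_{\le \mu_d}(f)$.

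For the last term I would use a sandwich argument. On the one hand, Theorem \ref{th4.1} gives $(-\mu_c^-)^* \le -\mu_c^-$; on the other hand, Proposition \ref{prop5.10} yields $|(-\mu_c^-)^*| \le \mu_c^-$, which is equivalent to $(-\mu_c^-)^* \ge -\mu_c^-$. Hence $(-\mu_c^-)^* = -\mu_c^-$, and substituting back gives the claimed identity $\mu^* = \mu_d - \mu_c^- + (\mu_c^+)^*$. The only mildly delicate point is verifying the orthogonal three-piece decomposition together with the $\check\MM_\rho$ membership of each piece so that Proposition \ref{prop5.13} can be legitimately iterated; once that bookkeeping is in place, the remaining identities are direct consequences of the tools already developed in the previous subsection.
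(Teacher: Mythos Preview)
Your argument is correct and follows essentially the same route as the paper: decompose $\mu$ into the three mutually singular pieces $\mu_d$, $\mu_c^+$, $-\mu_c^-$, invoke the additivity of the reduction operator on orthogonal measures (Proposition~\ref{prop5.13}), and then identify $\mu_d^*=\mu_d$ and $(-\mu_c^-)^*=-\mu_c^-$. The only noteworthy difference is in how the identity $(-\mu_c^-)^*=-\mu_c^-$ is obtained: the paper shows directly that $-\mu_c^-\in\GG(f)$ by sandwiching it between $-\beta_c^-$ (for some $\beta\in\GG_{\le\mu}(f)$) and $0$ via Proposition~\ref{prop5.5}, whereas you sandwich $(-\mu_c^-)^*$ itself using Theorem~\ref{th4.1} and Proposition~\ref{prop5.10}; both arguments are equally short and yield the same conclusion.
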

\begin{proof}
Let $\beta\in\GG_{\le\mu}(f)$. Then $-\beta^-_c\le-\mu^-_c$. By Proposition \ref{prop5.1}.(3) and Corollary \ref{cor5.1}, $-\beta^-_c\in\GG(f)$.
Thus, $\GG_{\le-\mu^-_c}(f)\neq\emptyset$. By Proposition \ref{prop5.5}, $(-\mu^-_c)^*=-\mu^-_c$. From this and  Corollary \ref{cor5.6abc},
we get
\[
\mu^*=\mu_d+(\mu_c^+)^*+(-\mu^-_c)^*=\mu_d-\mu_c^-+(\mu^{+}_c)^*.
\]
\end{proof}

\subsection{Continuity  of the reduction operator}
\label{sec6.3}

The main result of the present subsection is  continuity of the reduction operator
with respect to the norm $\|\cdot\|_{\MM_\rho}$. Note that if we assume additionally  that $f$
is non-increasing with respect to the second variable, then the reduction operator is even Lipschitz continuous
(see \cite[Theorem 5.10]{K:CVPDE}).

\begin{lemma}
\label{lm5.1}
Let $\mu_n\in \MM_\rho,\, n\ge 1$, $\mu\in \check\MM_\rho$, and $\mu\le\mu_{n+1}\le\mu_n,\, n\ge 1$.
Assume that  $\mu_n\rightarrow \mu$ in the norm $\|\cdot\|_{\MM_\rho}$.
Then $\mu_n^*\rightarrow \mu^*$ in the norm $\|\cdot\|_{\MM_\rho}$. 
\end{lemma}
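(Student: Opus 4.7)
The strategy is to exploit monotonicity of the reduction operator, so that $\{\mu_n^*\}$ becomes a decreasing sequence of signed measures trapped between $\mu^*$ and $\mu_n$, and then use the fact that $\GG(f)$ is closed in $(\MM_\rho,\|\cdot\|_\rho)$ to identify the limit.

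First I would verify that each $\mu_n$ lies in $\check\MM_\rho$: since $\mu\le\mu_n$ and $\GG_{\le\mu}(f)\neq\emptyset$, any $\nu\in\GG_{\le\mu}(f)$ also satisfies $\nu\le\mu_n$, so $\GG_{\le\mu_n}(f)\neq\emptyset$ and $\mu_n^*$ is well defined. By Proposition \ref{prop5.1}(2) applied to the chain $\mu\le\mu_{n+1}\le\mu_n$ we obtain
\[
\mu^*\le\mu_{n+1}^*\le\mu_n^*,
\]
and by Theorem \ref{th4.1} we have $\mu_n^*\le\mu_n$. Thus $\{\mu_n^*-\mu^*\}$ is a decreasing sequence of positive measures dominated by the finite measure $\mu_1-\mu^*\in\MM_\rho$.

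Next I would set $\bar\mu(B):=\lim_{n}\mu_n^*(B)=\inf_n\mu_n^*(B)$ for $B\in\BB(E)$. Countable additivity of $\bar\mu$ follows in the standard way: for disjoint $\{B_k\}$ with union $B$, the tails $\sum_{k>K}(\mu_n^*-\mu^*)(B_k)\le(\mu_1^*-\mu^*)(\bigcup_{k>K}B_k)\to 0$ as $K\to\infty$ uniformly in $n$, so one may interchange the two limits. This yields a signed measure $\bar\mu$ with $\mu^*\le\bar\mu\le\mu$ (the right inequality coming from $\mu_n\to\mu$ monotonically). Dominated/monotone convergence applied to the non-negative measures $\mu_n^*-\bar\mu\downarrow 0$ together with $\int_E\rho\,d(\mu_1^*-\bar\mu)<\infty$ gives
\[
\|\mu_n^*-\bar\mu\|_\rho=\int_E\rho\,d(\mu_n^*-\bar\mu)\longrightarrow 0.
\]

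Finally, since $\mu_n^*\in\GG(f)$ and $\GG(f)$ is closed in $(\MM_\rho,\|\cdot\|_\rho)$ by Proposition \ref{prop5.7}, we obtain $\bar\mu\in\GG(f)$. Because $\bar\mu\le\mu$, the maximality of $\mu^*$ in $\GG_{\le\mu}(f)$ forces $\bar\mu\le\mu^*$, and combined with $\mu^*\le\bar\mu$ this gives $\bar\mu=\mu^*$. Consequently $\mu_n^*\to\mu^*$ in $\|\cdot\|_\rho$. No single step looks like a serious obstacle; the only slightly delicate point is verifying that the pointwise limit $\bar\mu$ is indeed a countably additive measure and that monotone convergence in the $\|\cdot\|_\rho$ norm is legitimate, both of which follow from the uniform domination by $\mu_1^*-\mu^*\in\MM_\rho$.
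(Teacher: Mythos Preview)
Your argument is correct and follows essentially the same line as the paper: establish $\mu^*\le\mu_{n+1}^*\le\mu_n^*\le\mu_n$, pass to the monotone limit $\bar\mu$ in $\|\cdot\|_\rho$, observe $\mu^*\le\bar\mu\le\mu$, show $\bar\mu\in\GG(f)$, and conclude $\bar\mu=\mu^*$ by maximality. The only difference is cosmetic: to obtain $\bar\mu\in\GG(f)$ you invoke the closedness of $\GG(f)$ (Proposition~\ref{prop5.7}), whereas the paper uses the sandwich $\mu^*\le\bar\mu\le\mu_1^*$ together with Proposition~\ref{prop5.5} directly; since Proposition~\ref{prop5.7} is itself proved via Proposition~\ref{prop5.5}, the paper's route is marginally more direct, but both are valid and logically independent of Lemma~\ref{lm5.1}.
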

\begin{proof}
By Proposition \ref{prop5.1}, $\{\mu_n^*\}$ is a nondecreasing sequence and
\[
\mu^*\le \mu^*_n\le \mu_n,\quad n\ge 1.
\]
Since  $\{\mu_n^*\}$ is  nondecreasing, we may set $\beta=\lim_{n\rightarrow \infty}\mu^*_n$, where the limit is understood
in the norm $\|\cdot\|_{\MM_\rho}$. Letting $n\rightarrow \infty$ in the above inequality yields $\mu^*\le\beta\le\mu$.
Since $\mu^*\le\beta\le\mu^*_1$, we have by Proposition \ref{prop5.5} that $\beta\in\GG_{\le\mu}(f)$. Thus, $\beta=\mu^*$. 
\end{proof}

\begin{theorem}
\label{th23}
Let $\mu,\mu_n\in\MM_\rho,\, n\ge 1$. Assume that $\GG_{\le\mu}(f)\neq\emptyset$, $\GG_{\le\mu_n}(f)\neq\emptyset,\, n\ge 1$, and $\mu_n\rightarrow \mu$ in the norm $\|\cdot\|_{\MM_\rho}$. Then, $\mu^*_n\rightarrow \mu^*$ in the norm $\|\cdot\|_{\MM_\rho}$.
\end{theorem}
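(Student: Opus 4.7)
The plan is to prove the theorem via a Lipschitz-type estimate $|\alpha^* - \beta^*| \le |\alpha - \beta|$ on positive measures in $\check\MM_\rho$, and then reduce the general signed case through the structural decomposition of Proposition \ref{prop5.17}. The continuity claim follows because every building block involved in the decomposition is $1$-Lipschitz in $\|\cdot\|_\rho$.

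First I would establish the positive-case estimate: if $\alpha,\beta\ge 0$ and $\alpha,\beta\in\check\MM_\rho$, then $\|\alpha^*-\beta^*\|_\rho\le\|\alpha-\beta\|_\rho$. By Proposition \ref{prop5.1}(4), $\beta^*\ge 0$, hence $0\le\alpha\wedge\beta^*\le\beta^*$. Since $0\in\MM^0_\rho\subset\GG(f)$ by Proposition \ref{prop5.1}(1) and $\beta^*\in\GG(f)$, Proposition \ref{prop5.5} gives $\alpha\wedge\beta^*\in\GG(f)$; combining with $\alpha\wedge\beta^*\le\alpha$ and the maximality of $\alpha^*$ in $\GG_{\le\alpha}(f)$ forces $\alpha\wedge\beta^*\le\alpha^*$. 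The lattice identity $\beta^*-\alpha\wedge\beta^*=(\beta^*-\alpha)^+$, together with $\beta^*\le\beta$ from Theorem \ref{th4.1} and monotonicity of the positive-part operation, gives $\beta^*-\alpha\wedge\beta^*\le(\beta-\alpha)^+$. Hence $\beta^*-\alpha^*\le(\beta-\alpha)^+$, and by symmetry $\alpha^*-\beta^*\le(\alpha-\beta)^+$, so $|\alpha^*-\beta^*|\le|\alpha-\beta|$ as signed measures and the desired bound follows.

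Next I would reduce the general case via Proposition \ref{prop5.17}. Writing
\[
\mu_n^*-\mu^*=(\mu_n-\mu)_d-\bigl[(\mu_n)_c^- - \mu_c^-\bigr]+\bigl[((\mu_n)_c^+)^*-(\mu_c^+)^*\bigr],
\]
the first two brackets have $\|\cdot\|_\rho$-norm at most $\|\mu_n-\mu\|_\rho$, since $\mu\mapsto\mu_d$ is a $1$-Lipschitz capacity-projection and $\mu\mapsto\mu^\pm$ is $1$-Lipschitz for total variation. As $(\mu_n)_c^+$ and $\mu_c^+$ are positive and belong to $\check\MM_\rho$ (because $0\in\GG(f)$ sits below any positive measure), the positive-case estimate bounds the third bracket by $\|(\mu_n)_c^+-\mu_c^+\|_\rho\le\|\mu_n-\mu\|_\rho$. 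Summing yields $\|\mu_n^*-\mu^*\|_\rho\le 3\|\mu_n-\mu\|_\rho\to 0$.

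The one conceptual step is the membership $\alpha\wedge\beta^*\in\GG(f)$ in the positive case, resting on the stability-under-sandwiching property of Proposition \ref{prop5.5}; once this is in place, the rest is elementary lattice calculus. An alternative route more in the spirit of Lemma \ref{lm5.1} would be, after reducing to a subsequence with $\sum_k\|\mu_{n_k}-\mu\|_\rho<\infty$, to form the majorant $\bar\mu_m:=\mu+\sum_{k\ge m}(\mu_{n_k}-\mu)^+\searrow\mu$, which lies in $\check\MM_\rho$ since $\mu^*\le\bar\mu_m$, and to apply Lemma \ref{lm5.1} to get $\bar\mu_m^*\searrow\mu^*$; the main obstacle there is controlling the reduction from below, because the natural minorant $\underline\mu_m:=\mu-\sum_{k\ge m}(\mu-\mu_{n_k})^+$ is not obviously in $\check\MM_\rho$. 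This is precisely the difficulty the Lipschitz-style argument above bypasses.
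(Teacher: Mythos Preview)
Your proof is correct and in fact establishes more than the paper does: you obtain an explicit Lipschitz bound $\|\mu_n^*-\mu^*\|_\rho\le 3\|\mu_n-\mu\|_\rho$, whereas the paper proves only continuity.

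The approaches differ substantially. The paper argues in four steps: first $0\le\mu\le\mu_n$ (via a subsequence dominated by $\mu+\tfrac1l\beta$ and Lemma~\ref{lm5.1}), then $0\le\mu_n\le\mu$ (via a density argument with $\mu_n=s_n\cdot\mu$), then $\mu_n\ge0$ (sandwiching between $(\mu\wedge\mu_n)^*$ and $(\mu\vee\mu_n)^*$), and finally the general case through Proposition~\ref{prop5.17}. Your argument bypasses the first three steps entirely by observing directly that for positive $\alpha,\beta$ one has $\alpha\wedge\beta^*\in\GG_{\le\alpha}(f)$ via Proposition~\ref{prop5.5}, hence $\alpha\wedge\beta^*\le\alpha^*$, and then the lattice identity $\beta^*-\alpha\wedge\beta^*=(\beta^*-\alpha)^+$ together with $\beta^*\le\beta$ yields the pointwise estimate $|\alpha^*-\beta^*|\le|\alpha-\beta|$ of signed measures. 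The final reduction through Proposition~\ref{prop5.17} is the same in both proofs.

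It is worth noting that your pointwise inequality is essentially the content of \eqref{eq4.phat} in Proposition~\ref{prop25}, which the paper proves only \emph{after} Theorem~\ref{th23}; you have rediscovered that estimate independently and used it earlier. This also appears to sharpen the paper's own claim (made in the introduction to Section~\ref{sec6.3} and in the introductory discussion) that without monotonicity of $f$ the reduction operator ``is no longer Lipschitz continuous'': your argument shows it is $1$-Lipschitz on positive measures and $3$-Lipschitz on all of $\check\MM_\rho$, using only results available before Theorem~\ref{th23}.
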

\begin{proof}
All the convergences of measures considered in the proof below  will be  understood in the norm $\|\cdot\|_{\MM_\rho}$.

{\bf Step 1.} We assume additionally that $0\le\mu\le\mu_n,\, n\ge 1$. Let $(n_k)$ be a subsequence of $(n)$. By \cite[Proposition 4.2.4]{Peressini}, there exists a further subsequence $(n_{k_l})$, and positive $\beta\in\MM_\rho$ such that 
\[
|\mu_{n_{k_l}}-\mu|\le\frac{1}{l}\beta,\quad l\ge 1.
\]
Thus,
\[
\mu\le \mu_{n_{k_l}}\le\mu+\frac1l\beta,\quad l\ge 1.
\]
By Proposition \ref{prop5.1},
\[
\mu^*\le \mu^*_{n_{k_l}}\le(\mu+\frac1l\beta)^*,\quad l\ge 1.
\]
By Lemma \ref{lm5.1}, $(\mu+\frac1l\beta)^*\rightarrow \mu^*$. From this and the above inequality $\mu^*_{n_{k_l}}\rightarrow \mu^*$. Since $(n_k)$
was an arbitrary subsequence of $(n)$, we conclude that $\mu^*_n\rightarrow \mu^*$.

{\bf Step 2.} We assume additionally that $0\le\mu_n\le\mu,\, n\ge 1$. Then $\mu_n=s_n\cdot\mu$
for some Borel function $s_n$ on $E$ such that $0\le s_n\le 1$. Since $\mu_n\rightarrow \mu$, we have that $s_n\rightarrow 1$ in $L^1(E;\mu)$. Let $a\in (0,1)$. Observe that for any positive $\nu\in\MM_\rho$,
\begin{equation}
\label{eq5.31}
a\nu^*\le (a\nu)^*.
\end{equation} 
Set $A_n=\{s_n\ge a\}$. Then
\begin{equation}
\label{eq5.32}
a\mu_{\lfloor A_n}\le \mu_n\le\mu,\quad n\ge 1.
\end{equation} 
Moreover,
\begin{equation}
\label{eq5.33}
\mu(A_n^c)=\mu(|1-s_n|>1-a)\le \frac{1}{1-a}\int_E|1-s_n|\,d\mu\rightarrow 0.
\end{equation}
By (\ref{eq5.31}), (\ref{eq5.32}), Proposition \ref{prop5.1} and Corollary \ref{cor5.21}
\[
a(\mu^*)_{\lfloor A_n}\le \mu^*_n\le\mu^*,\quad n\ge 1.
\]
From this and (\ref{eq5.33}), we get
\[
a\mu^*\le\liminf_{n\rightarrow \infty}\mu^*_n\le\limsup_{n\rightarrow \infty}\mu^*_n\le\mu^*.
\]
Since $a\in (0,1)$ was arbitrary, we get $\mu^*_n\rightarrow \mu^*$.

{\bf Step 3.} We assume additionally that $\mu_n\ge 0,\, n\ge 1$. Then observe that
\begin{equation}
\label{eq5.34}
0\le\mu\wedge\mu_n\le\mu_n\le\mu\vee\mu_n,\quad n\ge 1.
\end{equation}
Clearly, $\mu\wedge\mu_n\rightarrow \mu$ and $\mu\vee\mu_n\rightarrow \mu$. Therefore, by Step 1 and  Step 2, $(\mu\wedge\mu_n)^*\rightarrow \mu^*$ and $(\mu\vee\mu_n)^*\rightarrow \mu^*$.  By (\ref{eq5.34}) and Proposition \ref{prop5.1},
\[
(\mu\wedge\mu_n)^*\le\mu^*_n\le(\mu\vee\mu_n)^*,\quad n\ge 1.
\]
Thus, $\mu_n^*\rightarrow  \mu^*$.

{\bf Step 4.} The general case.  By Proposition \ref{prop5.17},
\begin{equation}
\label{eq5.35}
\mu^*_n=(\mu_n)_d-(\mu^-_n)_c+(\mu^+_n)^*_c,\quad  \mu^*=\mu_d-\mu^-_c+(\mu^+_c)^*.
\end{equation}
Since $\mu_n\rightarrow \mu$, we have $(\mu_n)_d\rightarrow \mu_d$, $(\mu^-_n)_c\rightarrow \mu^-_c$, $(\mu^+_n)_c\rightarrow \mu^+_c$. By Step 3, 
$(\mu^+_n)^*_c\rightarrow (\mu^+_c)^*$.  As a result, by (\ref{eq5.35}), $\mu^*_n\rightarrow \mu^*$.
\end{proof}
\subsection{Existence and continuity of the metric projection onto good measures}
In what follows, in order to emphasize the dependence of the reduction operator on the nonlinearity $f$,
we shall denote by $\mu^{*,f}$ and $\mu_{*,f}$ the measure $\mu^*$ and $\mu_*$ appearing in the assertions of Theorem \ref{th4.1} and Theorem \ref{th4.2}, respectively.

We let
\[
\Pi_{f}(\mu):=\mu^{*,f},\quad \mu\in\check\MM_\rho.
\]
We also let $\hat\MM_\rho:=\{\mu\in\MM_\rho: \GG_{\ge\mu}\neq\emptyset\}$.
By Proposition \ref{prop5.5}, $\check\MM_\rho\cap \hat\MM_\rho=\GG(f)$. Thus, we may extend operator $\Pi_{f}$:
\begin{equation}
\Pi_{f}(\mu)=
\begin{cases}
\mu^{*,f},\quad\mu\in\check\MM_\rho\\
\mu_{*,f},\quad \mu\in\hat\MM_\rho.
\end{cases}
\end{equation}
Since for $\mu\in  \check\MM_\rho\cap \hat\MM_\rho$, we have $\mu^{*,f}=\mu_{*,f}=\mu$, the operator $\Pi_{f}$
is well defined on $ \check\MM_\rho\cup \hat\MM_\rho$.

We denote
\[
\tilde f(x,y):= -f(x,-y),\quad x\in E, y\in\BR.
\]
We get at once that if $f$ satisfies one of the conditions Int), Car), qM), Sig), then $\tilde f$ satisfies it too.

\begin{remark}
\label{rem4.1}
Observe that by Theorems \ref{th4.1}, \ref{th4.2}
for any $\mu\in\hat\MM_\rho$,
\[
-\Pi_{\tilde f}(-\mu)=-(-\mu)^{*,\tilde f}=\mu_{*,f}.
\]
\end{remark}

\begin{proposition}
\label{prop25}
The mapping
\[
\Pi_{f}:\check\MM_\rho\cup \hat\MM_\rho\to \GG(f)
\]
is the metric projection onto $\GG(f)$. Moreover,
\begin{equation}
\label{eq4.phat}
|\mu-\Pi_{f}(\mu)|\le (\mu-\nu)^+,\quad\mu\in\check\MM_\rho,\,\nu \in \GG(f), 
\end{equation}
and 
\begin{equation}
\label{eq4.pcheck}
|\mu-\Pi_{f}(\mu)|\le (\nu-\mu)^+,\quad\mu\in\hat\MM_\rho,\,\nu \in \GG(f). 
\end{equation}
Furthermore,  for any $\mu\in \check\MM_\rho\cup \hat\MM_\rho$, the measure $\Pi_{f}(\mu)$
is the only one element of $\GG(f)$ satisfying 
\[
\|\mu-\Pi_{f}(\mu)\|_{\MM_\rho}=\inf_{\nu\in\GG(f)}\|\mu-\nu\|_{\MM_\rho}.
\]
\end{proposition}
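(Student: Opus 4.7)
The plan is to first establish the pointwise bounds \eqref{eq4.phat} and \eqref{eq4.pcheck}, from which the metric projection inequality follows immediately by integrating against $\rho$, and then to derive uniqueness from the same bounds combined with the strict positivity of $\rho$.

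I would begin with the case $\mu\in\check\MM_\rho$. By Theorem \ref{th4.1} we have $\Pi_{f}(\mu)=\mu^{*,f}\le\mu$, so $|\mu-\Pi_{f}(\mu)|=\mu-\mu^{*,f}$ as a positive measure, and \eqref{eq4.phat} becomes equivalent to $\mu\wedge\nu\le\mu^{*,f}$ for every $\nu\in\GG(f)$. The key claim is that $\mu\wedge\nu$ itself is a good measure. To establish this, choose $\gamma\in\GG_{\le\mu}(f)$, which exists by definition of $\check\MM_\rho$. Then $\gamma\wedge\nu\in\GG(f)$ by Proposition \ref{prop5.1}(3), and since $\gamma\wedge\nu\le\mu\wedge\nu\le\nu$, Proposition \ref{prop5.5} gives $\mu\wedge\nu\in\GG(f)$. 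Because $\mu\wedge\nu\le\mu$, Theorem \ref{th4.1} yields $\mu\wedge\nu\le\mu^{*,f}$, which is \eqref{eq4.phat}.

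The case $\mu\in\hat\MM_\rho$ is symmetric (alternatively, one may appeal to Remark \ref{rem4.1} to reduce it to the previous case applied to $\tilde f$ and $-\mu$): here $\mu_{*,f}\ge\mu$, $|\mu-\Pi_{f}(\mu)|=\mu_{*,f}-\mu$, and the analogous argument shows $\mu\vee\nu\in\GG(f)$ for every $\nu\in\GG(f)$, so Theorem \ref{th4.2} gives $\mu_{*,f}\le\mu\vee\nu$ and \eqref{eq4.pcheck} follows. Integrating either \eqref{eq4.phat} or \eqref{eq4.pcheck} against $\rho$ yields $\|\mu-\Pi_{f}(\mu)\|_\rho\le\|\mu-\nu\|_\rho$ for every $\nu\in\GG(f)$, establishing that $\Pi_{f}$ is a metric projection onto $\GG(f)$.

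For uniqueness, suppose $\mu\in\check\MM_\rho$ (the other case is analogous) and $\nu\in\GG(f)$ attains the infimum. Then \eqref{eq4.phat} gives
\[
\|\mu-\Pi_{f}(\mu)\|_\rho\le\int_E\rho\,d(\mu-\nu)^+\le\|\mu-\nu\|_\rho=\|\mu-\Pi_{f}(\mu)\|_\rho,
\]
so equality holds throughout, and in particular $\int_E\rho\,d(\mu-\nu)^-=0$. Strict positivity of $\rho$ forces $\nu\le\mu$, whence $\nu\le\mu^{*,f}=\Pi_{f}(\mu)$ by Theorem \ref{th4.1}. The identity $\|\mu-\nu\|_\rho=\|\mu-\mu^{*,f}\|_\rho$ then reduces to $\int_E\rho\,d(\mu^{*,f}-\nu)=0$, so $\nu=\Pi_{f}(\mu)$. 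The main obstacle is the pointwise bound \eqref{eq4.phat} itself, namely that $\mu\wedge\nu$ is good for every good $\nu$; the argument uses the hypothesis $\mu\in\check\MM_\rho$ only indirectly, via the sandwich between the good measure $\gamma\wedge\nu$ and the good measure $\nu$, so that Proposition \ref{prop5.5} applies.
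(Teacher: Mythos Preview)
Your proof is correct and follows essentially the same route as the paper's: both arguments hinge on showing $\mu\wedge\nu\in\GG(f)$ via the sandwich $\gamma\wedge\nu\le\mu\wedge\nu\le\nu$ and Proposition~\ref{prop5.5}, then invoking maximality of $\mu^{*,f}$; your uniqueness argument is marginally cleaner in that it reuses \eqref{eq4.phat} directly rather than re-establishing that $\mu\wedge\nu\in\GG(f)$, but the underlying idea is identical.
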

\begin{proof}
Let $\nu\in\GG(f)$ and $\mu\in \check\MM_\rho$. The last relation implies that there exists $\beta\in\GG(f)$ such that $\beta\le \mu$.
Thus, $\beta\wedge\nu\le \mu\wedge\nu\le\nu$. By Proposition \ref{prop5.1} and Proposition \ref{prop5.5}, $\mu\wedge\nu\in \GG(f)$.
This in turn implies that $\mu\wedge \nu\le (\mu\wedge \nu)^{*,f}=\mu^{*,f}\wedge \nu$ (see Proposition \ref{prop5.12}).  Consequently,
\begin{equation}
\label{eq4.phat1}
|\mu-\Pi_{f}(\mu)|=|\mu-\mu^{*,f}|=\mu-\mu^{*,f}\le \mu-\mu^{*,f}\wedge\nu\le \mu-\mu\wedge\nu=(\mu-\nu)^+\le |\mu-\nu|.
\end{equation}
Therefore, $\|\mu-\Pi_{f}(\mu)\|_{\MM_\rho}\le \|\mu-\nu\|_{\MM_\rho}$ for any $\nu\in\GG(f)$. This completes the proof of case $\mu\in\check\MM_\rho$.
Now, let $\mu\in \hat\MM_\rho$.  Observe that $-\mu\in \check\MM_\rho$. Therefore, by Remark \ref{rem4.1}
\[
\|\mu-\Pi_{f}(\mu)\|_{\MM_\rho}=\|\mu-(-\Pi_{\tilde f}(-\mu))\|_{\MM_\rho}=\inf_{\nu\in\GG(\tilde f)}\|\nu-(-\mu)\|_{\MM_\rho}=\inf_{\nu\in\GG(f)}\|-\nu+\mu\|_{\MM_\rho}.
\]
Applying \eqref{eq4.phat1} with $-\mu$ in place of $\mu$ yields \eqref{eq4.pcheck}. For the proof of the last assertion 
of the proposition
suppose that $\mu\in \check\MM_\rho$ (the proof of the second case is analogous) and suppose that $\nu\in\GG(f)$
realizes the distance between  $\mu$ and  $\GG(f)$. Notice that
\begin{equation}
\label{eqj.j1}
\|\mu-\mu\wedge\nu\|_{\MM_\rho}=\|(\mu-\nu)^+\|_{\MM_\rho}.
\end{equation}
Since $\mu\in\check\MM_\rho$, there exists $\gamma\in\GG(f)$ such that $\gamma\le\mu$. 
Hence, $\gamma\wedge\nu\le \mu\wedge\nu\le \nu$. By Proposition \ref{prop5.5}, $\mu\wedge\nu\in\GG(f)$.
Therefore, since $\nu$ realizes the distance between $\mu$ and $\GG(f)$, we have
\begin{equation}
\label{eqj.j2}
\|\mu-\nu\|_{\MM_\rho}\le \|\mu-\mu\wedge\nu\|_{\MM_\rho}.
\end{equation}
This combined with \eqref{eqj.j1} yields $(\mu-\nu)^-=0$, so that $\nu\le\mu$. 
The last inequality is crucial since it implies that $\nu\le \mu^{*,f}$. We also have, $\Pi_{f}(\mu)=\mu^{*,f}\le \mu$. On the other hand, since  $\Pi_{f}(\mu)$
and $\nu$ realize the distance between $\mu$ and $\GG(f)$, we have
\[
\|\mu-\Pi_{f}(\mu)\|_{\MM_\rho}=\|\mu-\nu\|_{\MM_\rho}.
\]
Therefore, we deduce at once that $\Pi_{f}(\mu)=\nu$.
\end{proof}
Finally, we define the operator
\[
\Pi_{f}:\MM_\rho\to\GG(f)
\]
as follows: 
\begin{align*}
\Pi_{f}(\mu)&:=\Pi_{f}(\mu^+)+\Pi_{f}(-\mu^-)\\&
=\Pi_{f}(\mu^+)-\Pi_{\tilde f}(\mu^-)=(\mu^+)^{*,f}-(\mu^-)^{*,\tilde f}=(\mu^+)^{*,f}+(-\mu^-)_{*,f}.
\end{align*}
By Propositions \ref{prop5.10}, \ref{prop5.13}, $\Pi_{f}(\mu)\in \GG(f)$ for any $\mu\in \MM_\rho$.

\begin{theorem}
\label{th24}
$\Pi_{f}:\MM_\rho\to \GG(f)$ is  a continuous metric projection onto $\GG(f)$.
Moreover, 
\[
|\mu-\Pi_{f}(\mu)|\le |\mu-\nu|,\quad \mu\in\MM_\rho,\, \nu\in\GG(f).
\]
Furthermore, if $Q:\MM_\rho\to \GG(f)$ is a metric projection, with the property: $\mu\bot\nu$ implies $Q(\mu+\nu)=Q(\mu)+Q(\nu)$,
then $Q=\Pi_{f}$.
\end{theorem}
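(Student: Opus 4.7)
The plan is to reduce everything to the case of positive/negative measures handled in Proposition \ref{prop25} via the Jordan decomposition $\mu=\mu^+-\mu^-$, exploiting that $\Pi_{f}(\mu^+)$ and $\Pi_{f}(-\mu^-)$ live on complementary Hahn components of $\mu$. First I would fix $\mu\in\MM_\rho$, $\nu\in\GG(f)$, and a Hahn decomposition $E=E_+\cup E_-$ with $\mu_{\lfloor E_+}=\mu^+$, $\mu_{\lfloor E_-}=-\mu^-$. The critical observation is that $\nu_{\lfloor E_\pm}\in\GG(f)$: since $\nu\in\GG(f)$ we have $\nu=\nu^{*,f}$, so Corollary \ref{cor5.21} yields $(\nu_{\lfloor E_\pm})^{*,f}=(\nu^{*,f})_{\lfloor E_\pm}=\nu_{\lfloor E_\pm}$, meaning $\nu_{\lfloor E_\pm}$ coincides with its own reduction. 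Moreover, by Proposition \ref{prop5.10} applied to $f$ and to $\tilde f$, $(\mu^+)^{*,f}$ is supported in $E_+$ and $(\mu^-)^{*,\tilde f}$ in $E_-$, hence $\Pi_{f}(\mu)_{\lfloor E_+}=\Pi_{f}(\mu^+)$ and $\Pi_{f}(\mu)_{\lfloor E_-}=\Pi_{f}(-\mu^-)$. On $E_+$ I would then apply \eqref{eq4.phat} with test measure $\nu_{\lfloor E_+}$ to get
\[
|\mu-\Pi_{f}(\mu)|_{\lfloor E_+}=|\mu^+-\Pi_{f}(\mu^+)|\le(\mu^+-\nu_{\lfloor E_+})^+\le|\mu-\nu|_{\lfloor E_+},
\]
and symmetrically on $E_-$ via \eqref{eq4.pcheck} with $\nu_{\lfloor E_-}$. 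Gluing gives the pointwise bound $|\mu-\Pi_{f}(\mu)|\le|\mu-\nu|$, and integrating against $\rho$ yields the metric projection property.

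For continuity, I would first note that $\mu_n\to\mu$ in $\|\cdot\|_\rho$ implies $\mu_n^\pm\to\mu^\pm$ in $\|\cdot\|_\rho$ (a standard Jordan-decomposition fact). Theorem \ref{th23} applied to $f$ then gives $(\mu_n^+)^{*,f}\to(\mu^+)^{*,f}$; since $\tilde f$ inherits Car), Sig), Int), qM), applying the same theorem to $\tilde f$ yields $(\mu_n^-)^{*,\tilde f}\to(\mu^-)^{*,\tilde f}$. Combined with the identity $\Pi_{f}(\mu_n)=(\mu_n^+)^{*,f}-(\mu_n^-)^{*,\tilde f}$ furnished by Remark \ref{rem4.1}, this gives $\Pi_{f}(\mu_n)\to\Pi_{f}(\mu)$ in $\|\cdot\|_\rho$.

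For uniqueness, let $Q:\MM_\rho\to\GG(f)$ be a metric projection with $Q(\alpha+\beta)=Q(\alpha)+Q(\beta)$ for orthogonal $\alpha,\beta$. Since $\mu^+\bot\mu^-$, additivity gives $Q(\mu)=Q(\mu^+)+Q(-\mu^-)$. Because $0\in\GG(f)$ and $0\le\mu^+$, we have $\mu^+\in\check\MM_\rho$; analogously $-\mu^-\in\hat\MM_\rho$. The uniqueness clause in Proposition \ref{prop25} then forces $Q(\mu^+)=\Pi_{f}(\mu^+)$ and $Q(-\mu^-)=\Pi_{f}(-\mu^-)$, whence $Q=\Pi_{f}$. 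The main obstacle I expect is the pointwise bound: one must simultaneously justify that $\nu_{\lfloor E_\pm}$ remains in $\GG(f)$ so that \eqref{eq4.phat}-\eqref{eq4.pcheck} apply to the localized pieces, and consistently match the asymmetric $\check\MM_\rho$ and $\hat\MM_\rho$ estimates across the Hahn split.
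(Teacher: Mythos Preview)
Your proposal is correct and follows essentially the same strategy as the paper: decompose $\mu=\mu^+-\mu^-$, use Proposition~\ref{prop5.10} to see that $\Pi_f(\mu^+)$ and $\Pi_f(-\mu^-)$ sit on disjoint carriers, apply the one-sided bounds \eqref{eq4.phat}--\eqref{eq4.pcheck} from Proposition~\ref{prop25} to each piece, then invoke Theorem~\ref{th23} (for $f$ and for $\tilde f$) for continuity and the uniqueness clause of Proposition~\ref{prop25} for the final assertion. The only cosmetic difference is the choice of test measure in $\GG(f)$: the paper takes $\nu^{\pm}$ (Jordan parts of $\nu$, which lie in $\GG(f)$ by Proposition~\ref{prop5.1}(3)), whereas you take $\nu_{\lfloor E_\pm}$ (restrictions to a Hahn set of $\mu$, which lie in $\GG(f)$ by Corollary~\ref{cor5.21}); both lead to the same pointwise inequality $(\mu^+-\cdot)^++(\mu^--\cdot)^+\le|\mu-\nu|$.
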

\begin{proof}
Continuity of $\Pi_{f}$ follows from Theorem  \ref{th23}. Let $\nu\in\GG(f)$ and $\mu\in \MM_\rho$.
Clearly, $\Pi_{f}(\mu)\in \GG(f)$. Moreover, by Proposition \ref{prop25}, for any $\nu_1\in \GG(f)$, $\nu_2\in\GG(\tilde f)= -\GG(f)$
\[
|\mu-\Pi_{f}(\mu)|=|\mu^+-\Pi_{f}(\mu^+)|+|\mu^--\Pi_{\tilde f}(\mu^-)|\le (\mu^+-\nu_1)^++(\mu^--\nu_2)^+.
\]
Let $\nu\in\GG(f)$. Then $\nu^+\in\GG(f)$ and $\nu^-\in-\GG(f)$. Therefore,
\[
|\mu-\Pi_{f}(\mu)|\le (\mu^+-\nu^+)^++(\mu^--\nu^-)^+\le |\mu-\nu|.
\]
This implies the inequality asserted in the theorem, and the fact that $\Pi_{f}$ is the metric projection onto $\GG(f)$.
For the last assertion of the theorem, observe that operator $\Pi_{f}$ shares additivity  property
formulated in the assertion of the theorem for $Q$. Therefore, if $\Pi_{f}=Q$ on $\check\MM_\rho\cup\hat\MM_\rho$,
then $\Pi_{f}=Q$ on $\MM_\rho$. The fact that  $\Pi_{f}=Q$ on $\check\MM_\rho\cup\hat\MM_\rho$ follows easily from 
Proposition \ref{prop25}.
\end{proof}

\section{Characterization of the class of good measures}
\label{sec7}

\begin{proposition}
\label{propj.j1}
Assume that $\{\mu_n\}$ is a sequence of positive Borel measures such that 
$\sup_{n\ge 1}R\mu_n<\infty\,\,$ q.e.
Suppose that $R\mu_n\to 0\,\,m$-a.e. Then there exists a subsequence (not relabeled)
such that $R\mu_n\to 0$ q.e.
\end{proposition}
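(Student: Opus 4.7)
The plan is to work with the tail suprema $V_k:=\sup_{n\ge k}R\mu_n$ and show that $V_k\to 0$ quasi-everywhere; since $u_n:=R\mu_n\le V_n$, this will yield $u_n\to 0$ q.e., and so the conclusion holds even with the trivial subsequence $n_k=k$.

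Each $u_n$ is a positive excessive function, hence quasi-continuous by \cite[Theorem 4.2.3]{FOT}. The tail sup $V_k$ is a countable supremum of excessive functions, hence supermedian; by hypothesis $V_k<\infty$ q.e., $V_k$ decreases in $k$, and $V_k\to 0$ $m$-a.e.\ since $\limsup_n u_n=0$ $m$-a.e. I would pass to the excessive regularization $\hat V_k:=\lim_{t\to 0^+}P_tV_k$. In the symmetric regular Dirichlet form setting Hunt's hypothesis (H) holds, so semipolar sets are polar, whence $\hat V_k=V_k$ q.e. The $\hat V_k$ are excessive and decrease pointwise to some supermedian $V_\infty$ whose excessive regularization $\widehat{V_\infty}$ coincides with $V_\infty$ q.e.\ (again by (H)) and is itself excessive. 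From $V_k\to 0$ $m$-a.e.\ one gets $V_\infty=0$ $m$-a.e., hence $\widehat{V_\infty}=0$ $m$-a.e., and since $\widehat{V_\infty}$ is excessive (hence quasi-continuous), Remark \ref{rem3.1}(b) applied to $\widehat{V_\infty}$ and the zero function forces $\widehat{V_\infty}=0$ q.e. Unwinding the q.e.\ identifications (a countable union of polar sets is polar), $V_k\to 0$ q.e., and hence $u_n\to 0$ q.e.

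The main technical hinge is Hunt's hypothesis that semipolar sets are polar, which is classical for Hunt processes associated with symmetric regular Dirichlet forms but merits an explicit citation in the write-up. A subsequence-based alternative avoids it: extract $(n_k)$ with $\int_E u_{n_k}\varrho\,dm<2^{-k}$, so that $\sum_k u_{n_k}\in L^1(E;\varrho\cdot m)$ and hence $\sum_k u_{n_k}<\infty$ $m$-a.e.; identify $\sum_k u_{n_k}$ with the potential $R(\sum_k\mu_{n_k})$ of a positive Borel measure, and invoke the principle that an $m$-a.e.\ finite positive potential is q.e.\ finite (Remark \ref{rem3.1}(a)), which gives $u_{n_k}\to 0$ q.e. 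The obstacle in this route is producing the $L^1(E;m)$-majorant for $u_n\varrho$ needed for dominated convergence, since q.e.\ finiteness of $\sup_n u_n$ alone does not ensure, via the Riesz representation \cite[Proposition 3.9]{GetoorGlover} and \cite[Lemma 4.6]{K:CVPDE}, that its regularization corresponds to a Riesz measure in $\MM_\rho$.
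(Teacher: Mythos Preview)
Your first approach contains a genuine error: the supremum $V_k=\sup_{n\ge k}R\mu_n$ of excessive functions is \emph{not} supermedian in general. The supermedian property is preserved under infima, not suprema: from $P_tu_n\le u_n\le V_k$ and positivity of $P_t$ one only obtains $P_tV_k\ge P_tu_n$ for each $n$, hence $P_tV_k\ge\sup_{n\ge k} P_tu_n$, which is the wrong inequality; nothing forces $P_tV_k\le V_k$. Consequently the regularization $\hat V_k=\lim_{t\to 0^+}P_tV_k$ need not satisfy $\hat V_k\le V_k$, the appeal to hypothesis~(H) to identify $\hat V_k$ with $V_k$ q.e.\ has no footing, and the chain of q.e.\ identifications collapses. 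Your second route correctly names its own obstruction---the hypothesis $\sup_n R\mu_n<\infty$ q.e.\ supplies no $L^1(\varrho\cdot m)$-majorant, so you cannot extract a subsequence with $\int_E R\mu_{n_k}\varrho\,dm<2^{-k}$---and is therefore also incomplete.

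The paper's argument bypasses both issues by truncation: it invokes \cite[Lemma~5.1]{K:JFA1} for the bounded functions $k\wedge R\mu_n$ (which still converge to $0$ $m$-a.e.) to produce a single subsequence along which $k\wedge R\mu_n\to 0$ q.e.\ for every $k\ge 1$, and then the q.e.\ finiteness of $\sup_n R\mu_n$ upgrades this to $R\mu_n\to 0$ q.e. The boundedness gained by truncation is what makes the external lemma applicable, while the finite-sup hypothesis is used only at the last step to remove the truncation.
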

\begin{proof}
By \cite[Lemma 5.1]{K:JFA1}, there exists a subsequence (not relabeled) such that $k\wedge R\mu_n\to 0$ q.e. for any $k\ge 1$.
As a result, since $\sup_{n\ge 1}R\mu_n<\infty\,\,$ q.e., we infer from this convergence that up to subsequence $R\mu_n\to 0\,\,$q.e.
\end{proof}

\begin{theorem}
\label{th7.1}
Let  $\mu\in \MM_\rho$. 
\begin{enumerate}
\item[(1)]
$\mu\in\GG(f)$ if and only if
there exists a sequence $\{g_n\}\subset L^1(E;\rho\cdot m)$ such that
\begin{enumerate}
\item[(i)] $g_n+\mu\in\mathcal A(f),\, n\ge 1$.
\item[(ii)] $g_n\rightarrow 0$ in $L^1_{loc}(E;\rho\cdot m)$.
\end{enumerate}
\item[(2)]
Assume that $\rho$ is bounded and there exists $\varepsilon>0$ such that $\sup_{|y|\le\varepsilon}|f(\cdot,y)|\in L^1(E;\rho\cdot m)$, 
then $\mu\in\GG(f)$ if and only if
there exists a sequence $\{g_n\}\subset L^1(E;\rho\cdot m)$ such that condition (i) and the following one
\begin{enumerate}
\item[(ii')] $g_n\rightarrow 0$ in $L^1(E;\rho\cdot m)$
\end{enumerate}
hold.
\end{enumerate}
Furthermore, in both cases ((1) and (2)), if $\mu$ is positive (resp. negative) then $g_n$ may be taken negative (resp. positive).
\end{theorem}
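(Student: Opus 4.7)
The sufficiency direction $(\Leftarrow)$ in both (1) and (2) is immediate: given $\{g_n\}\subset L^1(E;\rho\cdot m)$ with $g_n+\mu\in\mathcal A(f)$, we have $\mu=(g_n+\mu)+(-g_n)\in\mathcal A(f)+L^1(E;\rho\cdot m)=\GG(f)$ by Proposition~\ref{cor5.2}. Note that the convergence of $\{g_n\}$ plays no role here, so both parts share the same sufficiency argument.

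For necessity, I would first reduce to positive $\mu$. Decomposing $\mu=\mu^+-\mu^-$ and using $\mu=\mu^{*,f}$ (since $\mu\in\GG(f)$) together with Corollary~\ref{cor5.6} and Proposition~\ref{prop5.17}, one checks that $\mu^+,-\mu^-\in\GG(f)$. Granting the positive case produces, for $\mu^+$, a sequence $g_n^+\le 0$; applying the dual statement to $-\mu^-$ via $\tilde f(x,y):=-f(x,-y)$ (Remark~\ref{rem4.1}) produces $g_n^-\ge 0$. Setting $g_n:=g_n^++g_n^-$ and exploiting the orthogonality of $\mu^+$ and $-\mu^-$ via Proposition~\ref{prop5.13} delivers the full conclusion.

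Now assume $\mu\ge 0$ and let $u$ be a solution to (\ref{eq1.1}). By Theorem~\ref{th3.1}(3) and its dual, $0\le u\le R\mu$, so Sig) gives $f(\cdot,u)\le 0$ $m$-a.e. Define
\[
h_n:=f(\cdot,u)\vee(-n\varrho),\qquad g_n:=f(\cdot,u)-h_n.
\]
Then $-n\varrho\le h_n\le 0$, $g_n\le 0$, and $|Rh_n|\le nR\varrho\le n\|R\varrho\|_\infty$ since $R\varrho$ is bounded. Moreover $|g_n|\le|f(\cdot,u)|\in L^1(E;\rho\cdot m)$ and $g_n\to 0$ $m$-a.e., so the dominated convergence theorem yields $g_n\to 0$ in $L^1(E;\rho\cdot m)$, establishing (ii) and (ii') simultaneously. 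A direct computation gives
\[
R(g_n+\mu)=Rf(\cdot,u)-Rh_n+u-Rf(\cdot,u)=u-Rh_n\in[u,\,u+n\|R\varrho\|_\infty].
\]

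\textbf{The main obstacle} is verifying condition (i), namely $f(\cdot,u-Rh_n)\in L^1(E;\rho\cdot m)$ for every $n$. The natural path is to invoke Int) with $\underline u:=u$ and $\overline u:=u+n\|R\varrho\|_\infty$ so that $|f(\cdot,u-Rh_n)|\le\sup_{y\in[\underline u,\overline u]}|f(\cdot,y)|$; this reduces matters to showing $f(\cdot,u+n\|R\varrho\|_\infty)\in L^1(E;\rho\cdot m)$, which is not automatic from the standing hypotheses. Under the hypothesis of (2), the assumption $\sup_{|y|\le\varepsilon}|f(\cdot,y)|\in L^1(E;\rho\cdot m)$ furnishes $f(\cdot,c)\in L^1(E;\rho\cdot m)$ for constants $|c|\le\varepsilon$; combining this with the sign-constancy of $f(\cdot,y)$ on $\{y>0\}$ given by Sig) and iterated application of Int) on shifted sub-intervals, one bootstraps integrability to $f(\cdot,u+c)$ for arbitrary $c>0$, completing part~(2). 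For part (1), lacking the extra hypothesis, one localizes on a generalized nest $(F_k)$ on which $u$ is bounded and $\sup_{|y|\le M}|f(\cdot,y)|\mathbf{1}_{F_k}\in L^1(E;\rho\cdot m)$ is available from qM); this localization is precisely why the convergence in (ii) must be stated only in $L^1_{loc}(E;\rho\cdot m)$.
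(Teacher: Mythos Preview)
Your sufficiency argument is fine (indeed cleaner than the paper's, which invokes closedness of $\GG(f)$ and the convergence of $g_n$).

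The necessity argument, however, has a genuine gap. Your candidate sequence $g_n=f(\cdot,u)-h_n$ makes condition (ii') automatic by dominated convergence, but condition (i) requires $f(\cdot,u-Rh_n)\in L^1(E;\rho\cdot m)$, and you correctly identify this as the crux. The proposed fix---``iterated application of Int) on shifted sub-intervals'' to bootstrap from $f(\cdot,u)\in L^1$ and $\sup_{|y|\le\varepsilon}|f(\cdot,y)|\in L^1$ to $f(\cdot,u+c)\in L^1$ for arbitrary $c>0$---does not work. Condition Int) only \emph{interpolates}: given integrability at the two endpoints of an interval, it yields integrability on the interval. It never extrapolates beyond a known endpoint. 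There is no way to reach $f(\cdot,u+\varepsilon)$ from $f(\cdot,u)$ and $f(\cdot,\varepsilon)$ via Int), and a moment's thought about, say, $f(y)=-e^{y^2}$ shows that $f(\cdot,u)\in L^1$ in no way controls $f(\cdot,u+c)$. The same obstruction kills the localization sketch for part (1): condition (i) is a \emph{global} $L^1(E;\rho\cdot m)$ requirement on $f(\cdot,R(g_n+\mu))$, so restricting to a nest $F_k$ where $u$ is bounded does not produce a valid $g_n$.

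The paper takes the opposite tack: it constructs $u_n$ as the maximal solution of $-Av=\tfrac1n f(\cdot,v)+\mu$ (more precisely, a diagonal of a doubly-indexed family $u_{n,m}$ solving $-Av=\tfrac1m f^+(\cdot,v)-\tfrac1n f^-(\cdot,v)+\mu$), and sets $g_n:=\tfrac1n f(\cdot,u_n)$. Then $R(g_n+\mu)=u_n$ and Proposition~\ref{prop3.3} gives $\|f(\cdot,u_n)\|_{L^1(\rho)}=n\|g_n\|_{L^1(\rho)}\le n\|\mu\|_\rho<\infty$, so (i) is automatic. All the work goes into proving $g_n\to 0$: one shows via probabilistic potential-theoretic arguments (reducing sequences, the inverse maximum principle, and Lemma~\ref{lmj.jlm}) that $u_n\to R\mu$ q.e.\ and $R|g_n|\to 0$ q.e., which yields (ii); under the extra hypothesis of (2) a short estimate splitting $E$ into $\{R|\mu|>\varepsilon\}$ and its complement upgrades this to (ii'). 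In short, the paper's choice of $g_n$ trades the difficulty you could not close for one that the available machinery (existence of maximal solutions to the scaled equations and their a priori bounds) can handle.
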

\begin{proof}
Sufficiency (in both cases) follows from Corollary \ref{cor5.21} and Proposition \ref{prop5.7}.
Let $\mu\in\GG(f)$. Set
\[
f_{n,m}(x,y):=\frac1mf^+(x,y)-\frac1nf^-(x,y),\quad x\in E, y\in\BR.
\]
Since $\mu\in \GG(f)$, there exists a solution $u$ to $-Av=f(\cdot,v)+\mu$. Hence
\[
-Au=f_{n,m}(\cdot,u)+(\mu-f_{n,m}(\cdot,u)+f(\cdot,u)).
\]
Clearly, $f_{n,m}(\cdot,u)\in L^1(E;\rho\cdot m)$. Therefore, from the above equation, $\mu-f_{n,m}(\cdot,u)+f(\cdot,u)\in\GG(f_{n,m})$.
Thus, by Proposition \ref{prop5.6}, $\mu\in \GG(f_{n,m})$. Consequently, by Theorem \ref{th4.1}, there exists a maximal solution $u_{n,m}$ to
\[
-Av=f_{n,m}(\cdot,v)+\mu.
\]
By Proposition \ref{prop.cop}, $u_{n,m}\le u_{n+1,m}$, $u_{n,m}\ge u_{n,m+1},\, n,m\ge 1$ q.e.
Set 
\[
w_n:=\lim_{m\rightarrow \infty}u_{n,m}=\inf_{m\ge 1} u_{n,m},\quad z_m:=\lim_{n\rightarrow \infty}u_{n,m}=\sup_{n\ge 1} u_{n,m}\quad \text{q.e.}
\]
and
\[
w:=\lim_{n\rightarrow \infty}w_n=\sup_{n\ge 1} w_n,\quad z:=\lim_{m\rightarrow \infty}z_m=\inf_{m\ge 1} z_m\quad \text{q.e.}
\]
Observe that 
\[
0\le u_{n,m}^+\le u_{n,1}^+,\quad m\ge 1\quad \text{q.e.}
\]
Thus, by Int),
\[
f^-(\cdot,u_{n,m})=|f(\cdot,u_{n,m}^+)|\le \sup_{0\le y\le u^+_{n,1}}|f(\cdot,y)|\in L^1(E;\rho\cdot m).
\]
From this, we conclude that, up to subsequence,
\[
Rf^-(\cdot,u_{n,m})\rightarrow Rf^-(\cdot,w_n)\quad m\mbox{-a.e.}
\]
By Proposition \ref{prop3.3},
\begin{equation}
\label{eq.mt1}
|u_{n,m}|+R|f_{n,m}(\cdot,u_{n,m})|\le R|\mu|\quad \mbox{q.e.}
\end{equation}
By \cite[Lemma 94, page 306]{DellacherieMeyer}, up to subsequence, $\frac 1m Rf^+(\cdot,u_{n,m})\rightarrow e_n,\, m\rightarrow \infty,\, m$-a.e. for some excessive
function $e_n$. By  \cite[Proposition 3.9]{GetoorGlover}, there exists a positive Borel measure $\beta_n$ such that $e_n=R\beta_n$. Therefore,
\begin{equation}
\label{eq.mt2}
w_n=R\beta_n-\frac1n Rf^-(\cdot,w_n)+R\mu\quad m\mbox{-a.e.}
\end{equation}
Now, we shall show that $w_n$ is quasi-continuous and $\beta_n\bot Cap$. 
Set $h:=R|\mu|$. By Remark \ref{rem3.1}(a), $h$ is quasi-continuous. Set
\[
\tau_k:=\inf\{t\ge 0:  h(X_t)\ge k\}\wedge k,\quad \hat\sigma_{k,j}:=\inf\{t\ge 0: \int_0^t \sup_{|y|\le k}|f(X_r,y)|\,dr\ge j\}.
\]
Since $h$ is quasi-continuous,
$\lim_{k\rightarrow \infty}\tau_k=\zeta$. By qM) and Corollary \ref{cor2.1}, $\lim_{j\rightarrow \infty}\hat \sigma_{k,j}=\zeta$
for any $k\ge 1$.
Thus, $\lim_{k\to \infty}\lim_{j\to \infty}\tau_{k,j}\nearrow \zeta$, where $\tau_{k,j}:=\tau_k\wedge \hat\sigma_{k,j}$.
By Lemma \ref{lm.m}, $\{\sigma_k\}$ is a reducing sequence for $h$, hence $\{\tau_{k,j}\}$ is a reducing sequence for $h$
for fixed $j\ge 1$.
Therefore, by (\ref{eq.mt1}), the last sentence also is in force  with $h$ replace by any of the following functions: $u_{n,m}, w_n, z_n, w, z$. 
By Lemma \ref{lm.m},
\begin{align}
\label{eq.mt3jj1}
\nonumber
u_{n,m}(X_{t\wedge\tau_{k,j}})=\mathbb E_x\Big[u_{n,m}(X_{\tau_{k,j}})&+\frac1m \int_{t\wedge\tau_{k,j}}^{\tau_{k,j}}f^+(\cdot,u_{n,m})(X_r)\,dr\\&-\frac1n \int_{t\wedge\tau_{k,j}}^{\tau_{k,j}}f^-(\cdot,u_{n,m})(X_r)\,dr
+\int_{t\wedge\tau_{k,j}}^{\tau_{k,j}}\,dA^{\mu_d}_r\big|\FF_{t\wedge\tau_{k,j}}\Big]\quad \mbox{q.a.s.}
\end{align}
By \cite[Lemma 6.1]{BDHPS}, for any $q\in (0,1)$ there exists $c_q>0$ such that
\begin{align}
\label{eq.mt3jj14}
\nonumber
\Big(\mathbb E_x&\sup_{t\le \tau_{k,j}}|u_{n,m}(X_t)-u_{n,l}(X_t)|^q\Big)^{1/q}\le c_q \mathbb E_x\Big[|u_{n,m}(X_{\tau_{k,j}})-u_{n,l}(X_{\tau_{k,j}})|\\&+\frac1m \int_0^{\tau_{k,j}}f^+(\cdot,u_{n,m})(X_r)\,dr
+\frac1l \int_0^{\tau_{k,j}}f^+(\cdot,u_{n,l})(X_r)\,dr\\&+\frac1n \int_0^{\tau_{k,j}}|f^-(\cdot,u_{n,m})(X_r)-f^-(\cdot,u_{n,l})(X_r)|\,dr\Big]
\quad \mbox{q.e.}
\end{align}
By Remark \ref{rem3.1}(c) and the choice of $\{\tau_{k,j}\}$, we obtain that the right-hand side of the above inequality tends to
zero as $m,l\to \infty$. Consequently, by Lemma \ref{lmj.jlm}, $w_n$ is quasi-continuous.
Taking $t=0$ in \eqref{eq.mt3jj1}, we get
\begin{align}
\label{eq.mt3}
\nonumber
u_{n,m}(x)=\mathbb E_xu_{n,m}(X_{\tau_{k,j}})&+\frac1m \mathbb E_x\int_0^{\tau_{k,j}}f^+(\cdot,u_{n,m})(X_r)\,dr\\&-\frac1n \mathbb E_x\int_0^{\tau_{k,j}}f^-(\cdot,u_{n,m})(X_r)\,dr
+\mathbb E_x\int_0^{\tau_{k,j}}\,dA^{\mu_d}_r\quad \mbox{q.e.}
\end{align}
Letting $m\rightarrow \infty$ and using the definition of $\tau_{k,j}$ and Remark \ref{rem3.1}(c) yields
\[
w_{n}(x)=\mathbb E_xw_{n}(X_{\tau_{k,j}})-\frac1n \mathbb E_x\int_0^{\tau_{k,j}}f^-(\cdot,w_{n})(X_r)\,dr+\mathbb E_x\int_0^{\tau_{k,j}}\,dA^{\mu_d}_r\quad\mbox{q.e.}
\]
Now, letting $j\to\infty$, and using quasi-continuity of $w_n$ and the fact that $(\tau_k)$
is a reducing sequence for $w_n$ (and $\tau_{k,j}\le\tau_k$), we find that 
\[
w_{n}(x)=\mathbb E_xw_{n}(X_{\tau_{k}})-\frac1n \mathbb E_x\int_0^{\tau_{k}}f^-(\cdot,w_{n})(X_r)\,dr+\mathbb E_x\int_0^{\tau_{k}}\,dA^{\mu_d}_r\quad\mbox{q.e.}
\]
On the other hand, since $w_n$ is quasi-continuous, we have by Remark \ref{rem3.1}(a)--(b) that, in fact, \eqref{eq.mt2}
holds q.e. Therefore, by \cite[Theorem 3.7]{K:CVPDE}, letting $k\rightarrow \infty$ in the above equation gives
\[
w_{n}=R(\mu+\beta_n)_c-\frac1n Rf^-(\cdot,w_n)+R\mu_d\quad\mbox{q.e.}
\]
Thus,
\[
w_{n}=R(\beta_n)_c-\frac1n Rf^-(\cdot,w_n)+R\mu\quad\mbox{q.e.}
\]
From this and (\ref{eq.mt2}), we conclude that $(\beta_n)_c=\beta_n$. Since $u_{n,m}\le u_{n,1},\, n,m\ge 1$ q.e., we have  $w_n\le u_{n,1},\,n\ge 1$ q.e.
Therefore, by the inverse maximum principle (see \cite[Theorem 6.1]{K:CVPDE}), $\beta_n+\mu_c\le\mu_c$. Hence, $\beta_n=0$. Consequently,
\begin{equation}
\label{eqj.j2}
w_{n}=-\frac1n Rf^-(\cdot,w_n)+R\mu\quad\mbox{q.e.}
\end{equation}
Repeating the reasoning (\ref{eq.mt1})--(\ref{eqj.j2}), with $u_{n,m}$ replaced by $w_n$ (and this time  letting $n\to\infty$)
and with $w_n$ replaced by $w$, we  find that  $w=R\mu$ q.e. Analogous  reasoning shows that $z=R\mu$ q.e.
Set $u_n=u_{n,n}, f_n=f_{n,n}$, then
\[
-Au_n=f_n(\cdot,u_n)+\mu.
\]
Observe that $w_n\le u_n\le z_n$ q.e. Thus, $u_n\rightarrow R\mu$ q.e. By (\ref{eq.mt1}),
\cite[Lemma 94, page 306]{DellacherieMeyer} and Proposition \ref{propj.j1}, there exist excessive functions $e_1,e_2$ such that, up to subsequence, 
\[
e^n_1:=\frac1n Rf^+(\cdot,u_n)\rightarrow e_1,\quad e_2^n:=\frac1n Rf^-(\cdot,u_n)\rightarrow e_2,\quad \mbox{q.e.}
\]
By  \cite[Proposition 3.9]{GetoorGlover} and once again (\ref{eq.mt1}),  there exist positive Borel measures $\beta_1,\beta_2$
such that $e_1=R\beta_1$, $e_2=R\beta_2$.
At the same time, since $u_n\rightarrow u$ q.e., we have $\frac1n Rf(\cdot,u_n)\rightarrow 0$ q.e.
Thus, $e_1=e_2$, and so $\beta_1=\beta_2$. By Lemma \ref{lm.m}
\[
e_1^n(x)=\mathbb E_xe_1^n(X_{\tau_k})+\frac1n\mathbb E_x\int_0^{\tau_k}f^+(\cdot,u_n)\quad\text{q.e.}
\]
By the choice of $\{\tau_k\}$ and Remark \ref{rem3.1}(c), we obtain, by letting $n\to \infty$, that $e_1(x)=\mathbb E_x e_1(X_{\tau_k})$ q.e.
Thus, by  \cite[Theorem 3.7]{K:CVPDE}, $e_1=R(\beta_1)_c$. Consequently, $R(\beta_1)_c=R\beta_1$, so that $(\beta_1)_c=\beta_1$.
By \cite[Proposition 3.7]{K:NoDEA}, there exists positive smooth measures $\lambda_n, \lambda\in \MM_\rho$ such that
\[
u^+_n=-e_2^n+R\mu^+-R\lambda_n,\quad u^+=R\mu^+-R\lambda.
\]
Letting $n\to \infty$ and using \eqref{eq.mt1} and  \cite[Proposition 3.9]{GetoorGlover}, we deduce that
there exists a positive measure $\lambda^0\in\MM_\rho$ such that
\[
u^+=-R\beta_1+R\mu^+-R\lambda^0,\quad u^+=R\mu^+-R\lambda.
\]
Thus, $\lambda^0+\beta_1=\lambda$. Since $\lambda$ is smooth and $(\beta_1)_c=\beta_1$, we conclude that $\beta_1=0$.
Consequently, $e_1=e_2=0$. As a result, we obtain that 
\[
R|f_n(\cdot,u_n)|\to 0\quad \text{q.e.}
\]
From this and \eqref{eq.mt1} we  infer that for any positive smooth measure $\nu$ such that $R\nu\le \rho$ we have
\begin{equation}
\label{eqj.j5}
\int_E |f_n(\cdot,u_n)|R\nu\,dm\to 0\quad\text{as}\quad n\to \infty.
\end{equation}
Set $F:=\{R|\mu|>\varepsilon\}$ and $h:=\rho\cdot\mathbf1_F$.
Let $e_h$ be the smallest excessive function less than or equal to $h$.
Clearly, $e_h\le |\rho|_\infty R|\mu|\wedge \rho$. Therefore, by \cite[Proposition 3.9]{GetoorGlover}, there exists a positive
measure $\nu\in\MM_\rho$ such that $e_h=R\nu$. Since $e_h$ is bounded, $\nu$ is a smooth measure.
Consequently, \eqref{eqj.j5} holds. We have
\begin{align*}
|f_n(\cdot,u_n)|\rho&=\mathbf1_F|f_n(\cdot,u_n)|\rho+\mathbf1_{F^c}|f_n(\cdot,u_n)|\rho
\\&\le
|f_n(\cdot,u_n)|e_h+\sup_{|y|\le\varepsilon}|f_n(\cdot,y)|\rho=|f_n(\cdot,u_n)|R\nu+\frac1n \sup_{|y|\le\varepsilon}|f(\cdot,y)|\rho.
\end{align*}
By \eqref{eqj.j5} and the assumptions made on $f$, we get the result. The last assertion of the theorem is obvious from the construction.
\end{proof}

We let $B_{L^1}(0,r):=\{u\in L^1(E;\rho\cdot m): \|u\|_{L^1(E;\rho\cdot m)}\le r\}$.

\begin{corollary}
\label{cor.main}
Under the notation and assumptions of Theorem \ref{th7.1}(2), we have
\begin{enumerate}
\item[(i)] for any $r>0$, $\mathcal A(f)+B_{L^1}(0,r)=\GG(f)$,
\item[(ii)] $cl \mathcal A(f)=\GG(f)$, where $cl$ denotes closure in the total variation norm $\|\cdot\|_\rho$.
\end{enumerate}
\end{corollary}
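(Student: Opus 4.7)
The plan is to deduce both assertions directly from Theorem \ref{th7.1}(2), using Proposition \ref{cor5.2} and Proposition \ref{prop5.7} for the easier inclusions. Throughout I identify a function $g\in L^1(E;\rho\cdot m)$ with the measure $g\cdot m\in \MM_\rho$, under which identification $\|g\cdot m\|_\rho=\|g\|_{L^1(E;\rho\cdot m)}$; this is the bridge converting weighted $L^1$ convergence into total variation convergence in $(\MM_\rho,\|\cdot\|_\rho)$.

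For (i), the inclusion $\mathcal{A}(f)+B_{L^1}(0,r)\subseteq \GG(f)$ is an immediate consequence of Proposition \ref{cor5.2}, since $B_{L^1}(0,r)\subseteq L^1(E;\rho\cdot m)$. For the reverse inclusion, I take $\mu\in \GG(f)$ and apply Theorem \ref{th7.1}(2) to extract a sequence $\{g_n\}\subseteq L^1(E;\rho\cdot m)$ such that $g_n+\mu\in\mathcal{A}(f)$ for every $n$ and $\|g_n\|_{L^1(E;\rho\cdot m)}\to 0$. Choosing $n$ large enough that $\|g_n\|_{L^1(E;\rho\cdot m)}\le r$, I write
\[
\mu=(\mu+g_n)+(-g_n),
\]
where $\mu+g_n\in \mathcal{A}(f)$ and $-g_n\in B_{L^1}(0,r)$, which exhibits $\mu$ as an element of $\mathcal{A}(f)+B_{L^1}(0,r)$.

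For (ii), the inclusion $cl\,\mathcal{A}(f)\subseteq \GG(f)$ follows by combining $\mathcal{A}(f)\subseteq \GG(f)$ (which is Proposition \ref{cor5.2} applied with the zero correction) with the closedness of $\GG(f)$ in $(\MM_\rho,\|\cdot\|_\rho)$ established in Proposition \ref{prop5.7}. For the reverse inclusion I invoke Theorem \ref{th7.1}(2) once more: given $\mu\in \GG(f)$, the sequence $\nu_n:=\mu+g_n$ lies in $\mathcal{A}(f)$ and satisfies
\[
\|\nu_n-\mu\|_\rho=\|g_n\|_{L^1(E;\rho\cdot m)}\to 0,
\]
so $\mu\in cl\,\mathcal{A}(f)$.

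There is essentially no technical obstacle: the corollary is a reformulation of Theorem \ref{th7.1}(2) in terms of algebraic sums and closures of the class of admissible measures. The only point that must be kept straight is the embedding $L^1(E;\rho\cdot m)\hookrightarrow \MM_\rho$ and the resulting isometric identification of the weighted $L^1$ norm with the restriction of $\|\cdot\|_\rho$ to absolutely continuous measures, which is what makes the approximation produced in Theorem \ref{th7.1}(2) convergent in the total variation norm on $\MM_\rho$.
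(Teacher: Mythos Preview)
Your proof is correct and is precisely the unpacking of the paper's one-line argument ``It follows directly from Theorem \ref{th7.1}'': you invoke Theorem \ref{th7.1}(2) for the nontrivial inclusions and Propositions \ref{cor5.2} and \ref{prop5.7} for the easy ones, which is exactly what the paper intends.
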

\begin{proof}
It follows directly from Theorem \ref{th7.1}.
\end{proof}

\begin{remark}
\label{cor.asym1}
Assume  that $g$ is a  function satisfying {\em Car),Sig),Int)}. Furthermore, assume that $f,g$ satisfy {\rm M)}.
Suppose  that there exist   constants $c_1,c_2,r>0$ such that 
\begin{equation}
\label{eq.asym1}
c_1\le\frac{|f(x,y)|}{|g(x,y)|}\le c_2,\quad m\mbox{-a.e.},\quad |y|\ge r.
\end{equation}
Then $\GG(f)=\GG(g)$.
\end{remark}
\begin{proof}
By  \eqref{eq.asym1}, we easily get that $\mathcal A(f)=\mathcal A(g)$. Therefore, by Corollary \ref{cor.main}, $\GG(f)=\GG(g)$.
\end{proof}

\begin{corollary}
\label{cor.asym12}
Under assumptions of Remark \ref{cor.asym1}, we have $\Pi_f=\Pi_g$. In particular, for positive $\mu\in\MM_\rho$, $\mu^{*,f}=\mu^{*,g}$.
\end{corollary}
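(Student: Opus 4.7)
The entire statement will follow from Remark \ref{cor.asym1}, which already delivers the equality $\GG(f)=\GG(g)$. My first step is to observe that the auxiliary sets $\check\MM_\rho$ and $\hat\MM_\rho$ depend only on $\GG(\cdot)$ (they are the collections of $\mu\in\MM_\rho$ admitting a good lower, resp. upper, bound), hence they coincide whether computed with respect to $f$ or with respect to $g$.

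Next, I would invoke the identifications $\mu^{*,f}=\max\GG_{\le\mu}(f)$ and $\mu_{*,f}=\min\GG_{\ge\mu}(f)$ supplied by Theorems \ref{th4.1} and \ref{th4.2}. Since $\GG_{\le\mu}(f)=\GG_{\le\mu}(g)$ and $\GG_{\ge\mu}(f)=\GG_{\ge\mu}(g)$ for every $\mu\in\MM_\rho$, the corresponding maxima and minima must agree, so $\mu^{*,f}=\mu^{*,g}$ on the common domain $\check\MM_\rho$ and $\mu_{*,f}=\mu_{*,g}$ on the common domain $\hat\MM_\rho$. Unwinding the definition $\Pi_f(\mu)=(\mu^+)^{*,f}+(-\mu^-)_{*,f}$ (meaningful because Proposition \ref{prop5.1}(1) ensures $0\in\GG(f)=\GG(g)$, whence $\mu^+\in\check\MM_\rho$ and $-\mu^-\in\hat\MM_\rho$), I then immediately obtain $\Pi_f=\Pi_g$ on all of $\MM_\rho$.

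For the ``in particular'' clause, when $\mu\ge 0$ one has $\mu^-=0$, and since $0\in\GG(f)$ is itself the smallest good measure $\ge 0$, it follows that $(-\mu^-)_{*,f}=0_{*,f}=0$; thus $\Pi_f(\mu)=(\mu^+)^{*,f}=\mu^{*,f}$, and similarly for $g$, which yields $\mu^{*,f}=\mu^{*,g}$.

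There is no real obstacle here: once Remark \ref{cor.asym1} is in hand, the statement reduces to the observation that both reduction operators and the metric projection depend on $f$ only through the class $\GG(f)$. A slightly slicker alternative route would bypass the direct comparison of suprema and instead invoke the uniqueness clause in Theorem \ref{th24}: since $\Pi_g$ is a continuous metric projection onto $\GG(g)=\GG(f)$ that is additive on orthogonal measures (by the analogue of Propositions \ref{prop5.10} and \ref{prop5.13} for $g$), it must coincide with $\Pi_f$.
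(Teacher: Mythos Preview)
Your proposal is correct and is exactly the argument the paper intends: the corollary is stated without proof because once Remark \ref{cor.asym1} gives $\GG(f)=\GG(g)$, all of $\check\MM_\rho$, $\hat\MM_\rho$, $\mu^{*,\cdot}$, $\mu_{*,\cdot}$ and hence $\Pi_{\cdot}$ depend on the nonlinearity only through $\GG(\cdot)$. Your ``in particular'' clause can be shortened slightly---for positive $\mu$ one has $0\in\GG_{\le\mu}(f)$, so $\mu\in\check\MM_\rho$ and $\Pi_f(\mu)=\mu^{*,f}$ directly from the first definition of $\Pi_f$ on $\check\MM_\rho$---but your route via $(-\mu^-)_{*,f}=0_{*,f}=0$ is equally valid.
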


\subsection*{Acknowledgements}
{\small This work was supported by Polish National Science Centre
(Grant No. 2017/25/B/ST1/00878).}


\begin{thebibliography}{32}

\bibitem{Abatangelo}
Abatangelo, N.:  A remark on nonlocal Neumann conditions for the fractional Laplacian. 
{\em Arch. Math.} (Basel) {\bf 114} (2020)  699--708.

\bibitem{BBCK}
Barlow, M. T. , Bass, R. F. ,  Chen, Z.-Q.,  Kassmann, M.:  Non-local Dirichlet forms and symmetric jump processes. {\em Trans. Amer. Math. Soc.}
{\bf 361} (2009) 1963--1999.

\bibitem{BLO}
Bartolucci, D., Leoni, F.,
Orsina, L.,  Ponce, Augusto C.:
Semilinear equations with exponential nonlinearity and measure data. 
{\em Ann. Inst. H. Poincar\'e Anal. Non Lin\'eaire} {\bf 22} (2005) 799--815.

\bibitem{BB}
B\'enilan, P, Brezis, H.: Nonlinear problems related to the
Thomas-Fermi equation. J. Evol. Equ. {\bf 3}, 673--770  (2004)

\bibitem{bil} Billingsley, P., Convergence of probability measures. Second edition. Wiley Series in Probability and Statistics: Probability and Statistics. A Wiley-Interscience Publication. John Wiley \& Sons, Inc., New York, 1999.

\bibitem{BH}
J. Bliedtner,  W.  Hansen, {\em Potential Theory}, Springer, Berlin, 1986.

\bibitem{BG}
Blumenthal, M.R., Getoor, R.K.: Markov Processes and Potential
Theory. Dover Publications, New York (2007)

\bibitem{BMP1}
Brezis, H., Marcus, M., Ponce, A.C.: A new concept of reduced
measure for nonlinear elliptic equations. C. R. Math. Acad. Sci.
{\bf339}, 169--174 (2004)

\bibitem{BMP}
Brezis, H., Marcus, M., Ponce, A.C.: Nonlinear elliptic equations
with measures revisited. In: Mathematical Aspects of Nonlinear
Dispersive Equations (J. Bourgain, C. Kenig, S. Klainerman, eds.),
Annals of Mathematics Studies, {\bf 163}, Princeton University
Press, Princeton, NJ, 55--110 (2007)

\bibitem{BrezisPonce}
Brezis, H., Ponce A.C.: Kato's inequality when $\Delta u$  is a
measure. C. R. Math. Acad. Sci. Paris {\bf 338}, 599--604 (2004)

\bibitem{BS}
Brezis, H., Strauss, W. A.: Semilinear second-order elliptic
equations in $L^1$. J. Math. Soc. Japan {\bf 25}, 565--590 (1973)


\bibitem{BDHPS}
Briand, Ph., Delyon, B., Hu, Y., Pardoux, \'E., Stoica, L.: $L^{p}$ solutions of Backward Stochastic Differential Equations. {\em Stochastic Process. Appl.} {\bf 108} (2003) 109--129.


\bibitem{CKS}
Carlen, E. A.,  Kusuoka, S.,   Stroock, D. W.:  Upper bounds for symmetric Markov
transition functions. {\em Ann. Inst. H. Poincare Probab. Statist.}, {\bf 23} (2, suppl.) (1987)
245--287.



\bibitem{CW}
Chanillo, S.; Wheeden, R. L.: Existence and estimates of Green's function for degenerate elliptic equations. 
{\em Ann. Scuola Norm. Sup. Pisa Cl. Sci.} (4) {\bf 15} (1988) 309--340. 

\bibitem{CFY}
Chen, H., Felmer, P., Yang, J.:
Weak solutions of semilinear elliptic equation involving Dirac mass. 
{\em Ann. Inst. H. Poincar\'e Anal. Non Lin\'eaire} {\bf35} (2018) 729--750.


\bibitem{CV1}
Chen, H.,  V\'eron, L.:
Semilinear fractional elliptic equations involving measures. {\em J. Differential Equations} {\bf257} (2014) 1457--1486.


\bibitem{CV2}
Chen, H.,  V\'eron, L.: Semilinear fractional elliptic equations with gradient nonlinearity involving measures. 
{\em J. Funct. Anal.} {\bf 266} (2014) 5467--5492.


\bibitem{CY}
Chen, H., Yang, J.:  Semilinear fractional elliptic equations with measures in unbounded domain. 
{\em Nonlinear Anal.} {\bf145} (2016) 118--142.

\bibitem{CF}
Chen, Z.-Q., Fukushima, M.:
{\em Symmetric Markov processes, time change, and boundary theory. }
London Mathematical Society Monographs Series, 35. Princeton University Press, Princeton, NJ, 2012. xvi+479 pp.



\bibitem{CK}
Chen, Z.,  Kumagai, T.: Heat kernel estimates for jump processes of mixed types on metric
measure spaces. Probab. Theory Related Fields {\bf 140},  277--317  (2008).




\bibitem{DellacherieMeyer}
Dellacherie, C., Meyer, P.A.: Probabilities and Potential C.
North-Holland, Amsterdam (1988)


\bibitem{DPP}
Dupaigne, L., Ponce, Augusto C., Porretta, A.:
Elliptic equations with vertical asymptotes in the nonlinear term.
{\em J. Anal. Math.} {\bf 98} (2006) 349--396.

\bibitem{DK}
Dyda, B.,  Kassmann, M.:
Regularity estimates for elliptic nonlocal operators. 
{\em Anal. PDE} {\bf 13} (2020)  317--370.









\bibitem{FOT}
Fukushima, M., Oshima, Y., Takeda, M.: Dirichlet forms and
symmetric Markov processes.  Second revised and extended edition.
Walter de Gruyter, Berlin, 2011.

\bibitem{GetoorGlover}
Getoor, R.K., Glover, J.: Riesz Decomposition in Markov Process
Theory. Trans. Amer. Math. Soc. {\bf 285}, 107--132 (1984)

\bibitem{K:NoDEA}
Klimsiak, T.: On uniqueness and structure of renormalized solutions to integro-differential
equations with general measure data.
{\em NoDEA Nonlinear Differential
Equations Appl.} {\bf 27} , Article number: 47 (2020)


\bibitem{K:CVPDE}
Klimsiak, T.: Reduced measures for semilinear elliptic equations involving Dirichlet operators. {\em  Calc. Var. Partial Differential Equations} {\bf 55} (2016) Art. 78, 27 pp.


\bibitem{K:JFA1}
Klimsiak, T.: Semilinear elliptic equations with Dirichlet operator and singular nonlinearities.  
{\em J. Funct. Anal.} {\bf 272},  (2017) 929--975





\bibitem{KR:MM}
Klimsiak, T., Rozkosz, A.: Renormalized solutions of semilinear elliptic
equations with general measure data.
 {\em Monatsh. Math.} {\bf 188} (2019) 689--702.

 \bibitem{KR:NoDEA2}
Klimsiak, T.:, Rozkosz, A.: On semilinear elliptic equations with diffuse measures. 
{\em NoDEA Nonlinear Differential Equations Appl.} August 2018, 25:35 



\bibitem{KR:CM}
Klimsiak, T., Rozkosz, A.: Semilinear elliptic equations with
measure data and quasi-regular Dirichlet forms. 
{\em Colloq. Math}. {\bf 145} (2016) 35--67.
 
 

\bibitem{KR:JFA}
Klimsiak, T., Rozkosz, A.: Dirichlet forms and semilinear elliptic
equations with measure data. J. Funct. Anal. {\bf 265}, 890--925
(2013)


\bibitem{K:arxiv}
Klimsiak, T.: Schr\"odinger equations with smooth measure potential and general measure data.
Available at  arXiv:1910.03717 

\bibitem{Konishi}
Konishi, Y.: Une remarque sur la perturbation d'op\'erateurs
$m$-accr\'etifs dans un espace de Banach. Proc. Japan Acad. {\bf
48}, 157--160 (1972)

\bibitem{KMS}
Kuusi,  T., Mingione, G.,  Sire, Y.: Nonlocal equations with measure data, {\em Comm. Math. Phys.} {\bf 337} (2015) 1317--1368.







































\bibitem{MR}
Ma, Z., R\"ockner, M.: Introduction to the Theory of
(Non-Symmetric) Dirichlet Forms. Springer-Verlag, Berlin (1992)



\bibitem{LDH}
Lv, G.,  Duan, J., He, J.:
Nonlocal elliptic equations involving measures. 
{\em J. Math. Anal. Appl.} {\bf432} (2015) 1106--1118.


\bibitem{MP}
Montenegro, M., Ponce, Augusto C.:
The sub-supersolution method for weak solutions. {\em Proc. Amer. Math. Soc.} {\bf 136} (2008) 2429--2438.

\bibitem{Oshima}
Oshima Y.: Semi-Dirichlet Forms and Markov Processes. Walter de
Gruyter, Berlin (2013)

\bibitem{Peressini}
Peressini,  A. L.: {\em Ordered topological vector spaces.}
Harper \& Row, Publishers, New York-London  (1967)


\bibitem{Protter}
Protter, P: Stochastic Integration and Differential Equations.
Second Edition. Springer, Berlin (2004)

\bibitem{Sharpe}
Sharpe, M.: General Theory of Markov Processes. Academic Press,
New York (1988)



\bibitem{Wang}
Wang, H.: A semilinear singular Sturm-Liouville equation involving measure data. 
{\em Ann. Inst. H. Poincar\'e Anal. Non Lin\'eaire} {\bf 33} (2016)  965--1007.

\bibitem{Casteren}
Van Casteren, J.A.: Markov Processes, Feller Semigroups and Evolution Equations. World
Scientific, New Jersey (2011)

\bibitem{Ve}
V\'eron, L.: Elliptic equations involving measures. Stationary
partial differential equations, Vol. I, 593--712. Handb. Differ.
Equ., North- Holland, Amsterdam (2004)




































\end{thebibliography}
\end{document}